\documentclass[11pt,twoside]{amsart}
\usepackage[dvips]{graphicx}
\usepackage[usenames,dvipsnames]{xcolor}
\usepackage{colortbl}
\usepackage{multirow}
\usepackage{pgf,tikz}
\usepackage{tkz-graph}
\usepackage{subfig}
\usepackage{amsmath, amsthm, amscd, amsfonts, amssymb, color}
\usepackage[bookmarksnumbered, plainpages]{hyperref}
\usepackage{relsize}
\usepackage{longtable}
\addtolength{\topmargin}{-1.5cm}
\linespread {1.3}
\textwidth 17.3cm
\textheight 23.3cm
\addtolength{\hoffset}{-0.3cm}
\oddsidemargin 0cm
\evensidemargin 0cm
\setcounter{page}{1}

\markboth{{\small\rm \hfill A. Abdollahi and F. Jafari
\hfill}\hspace{-\textwidth}%
\underline{${{}_{}}_{}$\hspace{\textwidth}}}
{\underline{${{}_{}}_{}$\hspace{\textwidth}}\hspace{-\textwidth}%
{\small\rm \hfill A. Abdollahi and F. Jafari
\hfill}}


\newtheorem{thm}{Theorem}[section]
\newtheorem{cor}[thm]{Corollary}
\newtheorem{lem}[thm]{Lemma}
\newtheorem{prop}[thm]{Proposition}
\newtheorem{defn}[thm]{Definition}
\newtheorem{rem}[thm]{Remark}

\newtheorem{Conj}[thm]{Conjecture}
\numberwithin{equation}{section}



\begin{document}

\oddsidemargin 0mm
\evensidemargin 0mm

\thispagestyle{plain}

\vspace{5cc}

\begin{center}

{\large\bf Cardinality of product  sets in torsion-free groups and  applications in group algebras}
\rule{0mm}{6mm}\renewcommand{\thefootnote}{}
\footnotetext{{\scriptsize 2010 Mathematics Subject Classification. Primary: 20D60; 20C05; Secondary: 11P70; 16S34. }\\
{\rule{2.4mm}{0mm} \scriptsize Keywords and Phrases:  Torsion-free group; Unique product group; Group algebra; Sumset; Product of sets.}}

{\rule{2.4mm}{0mm} }

\vspace{1cc}
{\large\it Alireza Abdollahi and Fatemeh Jafari}

\vspace{1cc}
\parbox{24cc}{{\small 
Let $ G $ be a  unique product group, i.e.,  for any two finite subsets $A,B$ of $ G $ there exists $ x\in G $
which can be uniquely expressed as a product of an element of $A$ and an element of $B$. We prove that,  if  $C$ is a  finite  subset of  $G$ containing the identity element such that $\left\langle C\right\rangle $ is not abelian, then for all subsets $ B $ of $ G $ with $ |B|\geq 7 $, $|BC|\geq |B|+|C|+2$. 
Also, we prove that if  $C$ is a finite  subset containing the identity element of a  torsion-free group $G$ such that $ |C|=3 $ and $\left\langle C\right\rangle $ is not abelian, then for all subsets $ B $ of $ G $ with $ |B|\geq 7 $, $|BC|\geq |B|+5$. Moreover, if $\left\langle C\right\rangle$ is not isomorphic to the Klein bottle group, i.e., the group with the presentation $ \left\langle x,y\;|\; xyx=y\right\rangle  $, then   for all subsets $ B $ of $ G $ with $ |B|\geq 5 $, $|BC|\geq |B|+5$.
 The support of an  element $ \alpha= \sum_{x\in G}\alpha_xx$ in a group algebra $\mathbb{F}[G]$ ($\mathbb{F}$ is any field), denoted by $supp(\alpha)$, is the set $ \{x \in G\;|\;\alpha_x\neq 0\} $.
By the latter result, we prove that if $\alpha\beta= 0$ for some non-zero
$\alpha,\beta\in \mathbb{F}[G]$ such that $|supp(\alpha)| = 3$, then $|supp(\beta)|\geq 12$. Also, we prove that if $\alpha\beta= 1$ for some 
$\alpha,\beta\in \mathbb{ F}[G]$ such that $|supp(\alpha)| = 3$, then $|supp(\beta)|\geq 10$. These results  improve a part of results in  Schweitzer [J. Group Theory, 16 (2013), no. 5, 667-693] and   Dykema et al. [Exp. Math., 24 (2015), 326-338] to arbitrary fields, respectively. }}
\end{center}
\vspace{1cc}
\vspace{1cc}
\begin{center}
\section {\bf Introduction and Results}
\end{center}
Let  $G$ be a torsion-free group  written multiplicatively, and let $ |.| $ denote the cardinality of a finite set. One of the basic problems in Additive Number Theory is  to obtain  lower bounds for the cardinality of $ BC=\{bc\;|\; b\in B ,\;c\in C\}$ of two finite subsets $ B $ and $ C $ in
terms of $ |B| $ and $ |C| $. There are several related results if $G$ is abelian (see  \cite{Freiman1, Freiman2, Gardner, Green, Ruzsa1,Robinson,  Ruzsa2},  for instance). But, in the non-abelian case the situation is much less understood. By the main result of \cite{set2},
\begin{equation}\label{kemperman}
|BC|\geq |B|+|C|-1.
\end{equation}
In \cite{BF},  it is proved that, for  $|B|,|C|\geq 2$, the equality in  Equation \ref{kemperman} holds if  there exist $ b\in B^{-1} $ and $ c\in C^{-1} $ such that both $bB$ and $Cc$ are progressions with common ratio that is, a set of the form $ \{a,ar,\dots,ar^{n-1}\} $, for some commuting elements $a$ and $  r$ of $G$ and some integer $n$ ($r$ will be called the ratio of the progression and $n$ its length). It is proved in \cite{Hamidoune} that if $ C $ is a finite  subset containing the identity element of a  torsion-free group $G$ such that  $\left\langle C\right\rangle $ is not abelian, then for all subsets $ B $ with  $ |B|\geq 4 $, 
\begin{equation}\label{Hamidoune}
|BC|\geq |B|+|C|+1.
\end{equation}
Also, in \cite{palfy}, it is proved that if $ B $ is not contained in the left coset of a cyclic subgroup, and $ |C|\geq 32(3+k)^6 $, for $k\geq 1 $, then 
\[
|BC|\geq |B|+|C|+k.
\]
By using the so-called isoperimetric method, see  \cite{Hamidoune2}, \cite{Hamidoune3} or
\cite{Hamidoune1}, and a  multigraph $S(B,C)  $  associated
with a pair  $ (B,C) $ of subsets in a group (see Definition \ref{defgraph}, below), we prove that the lower bound  of $ |BC| $ in \ref{Hamidoune} can be improved for unique product groups. Recall that a group $G$ has the unique product property, if for every
pair of finite non-empty sets $A,B\subseteq G$, there exists an element $x\in AB$ such that $|\{(a,b)\in A\times B \;|\; x=ab \}|=1$.
Note that every unique product group is torsion-free. Obvious examples of unique product groups are right or left linearly orderable
groups and so including torsion-free nilpotent groups. In fact the class of unique product groups are very vast as it is closed under taking extensions, subdirect products and being local graded (see \cite[pp. 111, Theorem 26.3]{PI}, \cite{Rudin} and \cite{Hig}); it was first shown in \cite{Rips}
that not all torsion-free groups have the unique product property by constructing a group with the use of 
a generalization of small cancellation theory so-called graphical small cancellation theory.  
Later in \cite{Pro} a non-unique product group as a subgroup of the direct product of 3 infinite dihedral groups was found. The notion of unique product groups was first considered to settle the existence of non-zero zero divisors
in  group algebras of such groups (see Higman's PhD Thesis \cite{Hig2}); the unit conjecture of group algebras of torsion-free groups cannot be easily settled in the class of unique product groups and to solve the problem a similar slightly stronger notation of ``two unique product group"
 has been considered (a two unique product group is defined as a unique product group, where in the definition, one replaces ``an element" by ``two distinct elements" and consider the obvious necessary condition ``$|A|>1$ or $|B|>1$"). However  these two notations are the same \cite{stro}.

Our main results are as follows:
\begin{thm}\label{uptable}
Let $G$ be a  unique product group and  $C$ be a  finite  subset of $G$ containing the identity element such that  $\left\langle C\right\rangle$ is not abelian.  Then for all subsets $ B $ of $ G $ with $ |B|\geq 7 $,
$|BC|\geq |B|+|C|+2$.
\end{thm}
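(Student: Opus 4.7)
The strategy is to argue by contradiction. Suppose $|BC|\le |B|+|C|+1$; by the Hamidoune bound \eqref{Hamidoune} this forces equality $|BC|=|B|+|C|+1$, so that $(B,C)$ is an extremal pair for that lower bound. The task then reduces to showing that such an extremal pair cannot exist in a unique product group once $|B|\ge 7$.

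The first step is to analyse the multigraph $S(B,C)$ announced in Definition~\ref{defgraph}, whose vertex set is $BC$ and whose edges encode the collisions $bc=b'c'$ in $B\times C$. A standard double-counting argument ties the number of edges of $S(B,C)$ to the defect $|B|\,|C|-|BC|$, so the equality $|BC|=|B|+|C|+1$ forces $S(B,C)$ to carry a prescribed, and rather large, number of multi-edges. Feeding this edge count into the isoperimetric framework of \cite{Hamidoune1,Hamidoune2,Hamidoune3}, and combining it with the Brailovsky--Freiman description \cite{BF} of the equality case in \eqref{kemperman}, should yield a rigid structural statement about $(B,C)$: after a suitable left/right translation, a large part of $B$ and a large part of $C$ must sit along a common progression inside a cyclic subgroup of $G$.

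At this point the unique product hypothesis enters decisively. It furnishes an element $x\in BC$ with a unique representation $x=b_0c_0$, which in the language of $S(B,C)$ corresponds to a vertex of the lowest possible degree, in tension with the density of multi-edges already forced on $S(B,C)$. Juxtaposing this forced ``hole'' with the density count either contradicts the edge count directly, or it refines the structural description one step further and forces $\langle C\rangle$ to be contained in a cyclic (hence abelian) subgroup of $G$, contradicting the hypothesis that $\langle C\rangle$ is non-abelian.

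The main obstacle will be the quantitative reconciliation in this last step: the bound $|B|\ge 7$ is expected to arise as the exact threshold at which the density inequality becomes strict once the unique-product hole has been subtracted, and tuning the counting so that this threshold appears cleanly is where most of the technical work will lie. I anticipate needing a short case analysis on small values of $|C|$, where the Brailovsky--Freiman description degenerates, together with a direct argument ruling out the exceptional shapes of $(B,C)$ that realise equality in \eqref{Hamidoune} (for instance $B$ itself a coset of a cyclic subgroup, or $B$ contained in the centralizer of some element of $C$); these are the configurations where an extra $+1$ cannot be extracted from the generic counting argument and where the unique product property has to be applied by hand.
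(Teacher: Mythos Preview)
Your outline misses the actual mechanism, and in a few places the heuristics point in the wrong direction.

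First, the paper does \emph{not} analyse an extremal pair $(B,C)$ directly. It passes to the isoperimetric number $\kappa_7(C)$ and works with a $7$-atom $A$ of $C$. In a unique product group, Conjecture~\ref{con1} is known, so $|A|=7$; this is the only place where ``unique product'' buys control over the \emph{size} of the critical set. Your plan keeps $B$ arbitrary and tries to read off structure from $S(B,C)$, but nothing forces $|B|$ to be small, and without that the edge-count/density heuristic you sketch does not close.

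Second, the proof is an induction on $|C|$, not a single density argument. One takes the uniquely represented product $x=ac\in AC$, sets $C'=C\setminus\{c\}$, and observes $|AC|\ge |AC'|+1$. If $\langle C'\rangle=\langle C\rangle$ one applies the inductive hypothesis to $C'$; if not, $A$ meets at least two left cosets of $\langle C'\rangle$ and a direct Kemperman estimate on each piece gives $|AC'|\ge |A|+2|C'|-2\ge |A|+|C'|+2$. Either way $|AC|\ge |A|+|C|+2$. The unique product property is used to \emph{peel off an element of $C$}, not to locate a low-degree vertex contradicting a density count.

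Third, Brailovsky--Freiman classifies equality in the Kemperman bound $|BC|=|B|+|C|-1$, not in Hamidoune's bound $|BC|=|B|+|C|+1$; invoking it to force a progression structure on an extremal Hamidoune pair is not justified. It does appear in the paper, but only in the base case $|C|=4$ (Lemma~\ref{u.p1}), where the induction step forces $|AC'|$ close to the Kemperman bound for the $3$-element set $C'$.

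Finally, the bulk of the work---and the reason the threshold $|B|\ge 7$ appears---lies in the base cases $|C|=3$ and $|C|=4$. For $|C|=3$ (Corollary~\ref{mainresut7}) one needs a detailed classification of $4$-, $5$-, and $6$-atoms of $C$ via the graph $P(B,C)$ (whose vertex set is $B$, not $BC$), the triangle/square taxonomy of Remarks~\ref{type} and~\ref{squ1}, and the Klein bottle obstruction; this is what ultimately pushes the threshold to $7$. Your outline acknowledges a ``short case analysis on small values of $|C|$'' but this is in fact the longest part of the argument, and none of the structural ingredients (types of cycles, forbidden subgraphs, Klein bottle quotients) appear in your plan.
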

In  Lemmas \ref{7part}, \ref{maintheorem} and \ref{mainresut7} (see below), we determine    the structure of  $ B $ and $ C $ with $ |C|=3 $ and  $ |B|\in \{4,5,6\} $  satisfying    the equality of  \ref{Hamidoune}. By Lemmas \ref{maintheorem} and \ref{mainresut7} (see below), if $ |B|\in \{5,6\}$, then $ G $ is isomorphic to the  Klein bottle group i.e., the group with the presentation $ \left\langle x,y\;|\; xyx=y\right\rangle  $.
\begin{thm}\label{main}
Let $G$ be a  torsion-free group and  $C$ be a  finite  subset of $G$ containing the identity element such that $|C|=3$ and  $\left\langle C\right\rangle$ is not abelian. Then for all subsets $ B $ of $ G $ with $ |B|\geq 7 $, $|BC|\geq |B|+5$. In particular, if $\left\langle C\right\rangle$ is not isomorphic to the Klein bottle group, then  for all subsets with $ |B|\geq 5 $, $ |BC|\geq |B|+5 $.
\end{thm}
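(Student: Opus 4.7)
By Hamidoune's inequality \ref{Hamidoune}, the hypothesis that $\langle C\rangle$ is non-abelian combined with $|B|\ge 4$ already gives $|BC|\ge |B|+|C|+1=|B|+4$, so the theorem reduces to excluding the single value $|BC|=|B|+4$. The plan is to combine this with the structure lemmas \ref{7part}, \ref{maintheorem} and \ref{mainresut7}, which classify the extremal pairs $(B,C)$ with $|C|=3$ and $|B|\in\{4,5,6\}$, together with an inductive step for larger $B$ based on the multigraph $S(B,C)$ of Definition \ref{defgraph}.

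We first handle the ``in particular'' clause. Assume $\langle C\rangle$ is not the Klein bottle group and $|B|\ge 5$. For the base cases $|B|\in\{5,6\}$, Lemmas \ref{maintheorem} and \ref{mainresut7} state that any equality $|BC|=|B|+4$ forces $\langle C\rangle\cong\langle x,y\mid xyx=y\rangle$, contradicting the hypothesis, so $|BC|\ge |B|+5$. For $|B|\ge 7$ we proceed by strong induction: if $|BC|=|B|+4$, then for any $b\in B$ and $B':=B\setminus\{b\}$ (of size at least $6$), the induction hypothesis combined with $B'C\subseteq BC$ gives $B'C=BC$. Hence $bC\subseteq B'C$, so each of the three products $bc$ ($c\in C$) admits a second representation $b'c'$ with $b'\in B'$ and $c'\in C$. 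Letting $b$ range over $B$, these coincidences are recorded as edges of $S(B,C)$, and a degree count, exploiting that $G$ is torsion-free, should force $\langle C\rangle$ to be abelian, a contradiction.

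The principal claim for $|B|\ge 7$ with no restriction on $\langle C\rangle$ then splits into two sub-cases. If $\langle C\rangle$ is not the Klein bottle group, the ``in particular'' part just proven already gives the bound. If $\langle C\rangle$ is the Klein bottle group, we perform a direct analysis inside $H:=\langle x,y\mid xyx=y\rangle$: using the normal form $x^iy^j$ and the symmetries of $H$, we put $C=\{1,c_1,c_2\}$ into a short list of canonical shapes, read off from Lemma \ref{mainresut7} the explicit extremal configurations for $|B|=6$, and verify that none of them can be enlarged to a $B$ of size $7$ without gaining at least two additional elements in $BC$.

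The main obstacle is this last Klein bottle sub-case. Because Lemma \ref{mainresut7} confirms that extremal pairs genuinely occur for $|B|=6$ inside the Klein bottle group, a formal induction from $|B|=6$ to $|B|=7$ is unavailable, and the argument must descend to an explicit combinatorial computation inside $H$. All remaining steps are routine combinations of Hamidoune's bound \ref{Hamidoune}, the multigraph $S(B,C)$, and the structure Lemmas \ref{7part}--\ref{mainresut7}.
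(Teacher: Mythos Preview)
Your plan has a circular reference and a conceptual gap that both stem from not using the isoperimetric framework the paper is built on.

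\textbf{The reference issue.} You invoke \ref{mainresut7} as the structure lemma for $|B|=6$, but \ref{mainresut7} is Corollary~3.7, which \emph{is} the first assertion of the theorem you are proving (the bound for $|B|\ge 7$). The $|B|=6$ classification is Lemma~\ref{maintheorem6}. (The paper's introduction contains the same slip, which may have misled you.)

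\textbf{The real gap.} Theorem~\ref{maintheorem} and Lemma~\ref{maintheorem6} do not speak about arbitrary extremal $B$; they describe the $5$-atom and the $6$-atom of $C$, i.e.\ the minimizers of $|\partial_C(X)|$ over $|X|\ge k$. The point you are missing is that a lower bound on $\kappa_k(C)$ \emph{automatically} yields $|BC|\ge|B|+\kappa_k(C)$ for every $B$ with $|B|\ge k$. Hence Theorem~\ref{maintheorem} alone proves the ``in particular'' clause: if $\langle C\rangle$ is not Klein bottle then $\kappa_5(C)\le 4$ is impossible, so $\kappa_5(C)\ge 5$, and the inequality follows for \emph{all} $|B|\ge 5$ at once. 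Your separate $|B|=6$ base case and your induction for $|B|\ge 7$ are unnecessary. Moreover, your induction step is actually incomplete: from $B'C=BC$ for every $b$ you correctly get $r_{BC}(x)\ge 2$ for all $x$, but this gives only $3|B|\ge 2(|B|+4)$, i.e.\ $|B|\ge 8$. That is a contradiction at $|B|=7$, but for $|B|\ge 8$ it does not ``force $\langle C\rangle$ to be abelian''; further input (zero-divisor or unit arguments as in Propositions~\ref{conz}--\ref{conu}, or a graph analysis as in Lemma~\ref{5atom}) would be required.

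\textbf{The Klein bottle sub-case.} Here your outline matches the paper's approach in spirit (reduce to the single $|B|=6$ configuration of Lemma~\ref{maintheorem6} and show it admits no one-element extension with boundary $4$), but your write-up addresses only $|B|=7$ and says nothing about $|B|\ge 8$. The paper closes this by working isoperimetrically: assume $\kappa_7(C)\le 4$, take a $7$-atom $A$, use that the Klein bottle group has the unique-product property to conclude $|A|=7$ and to locate $a\in A$ with $r_{AC}(ac)=1$, and then apply Lemma~\ref{maintheorem6} to $A\setminus\{a\}$. The resulting contradiction shows $\kappa_7(C)\ge 5$, which gives the bound for all $|B|\ge 7$ simultaneously.
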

Table \ref{10101} summarizes some  results for the  lower bound of $|BC|$, where $B$ and $C$ are two finite subsets of a torsion-free group.\\
We note that the support of an  element $ \alpha= \sum_{x\in G}\alpha_xx$ in a group algebra, denoted by $supp(\alpha)$, is the set $ \{x \in G\;|\;\alpha_x\neq 0\} $. The following are corollaries of  Theorem \ref{main}.
\begin{cor}\label{11}
Let $  \alpha$ and $  \beta$ be non-zero elements of the group algebra of any torsion-free group over an
arbitrary field. If $|supp(\alpha)| = 3$ and $\alpha \beta= 0$, then $|supp(\beta)| \geq 12$.
\end{cor}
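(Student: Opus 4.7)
The plan is to combine Theorem \ref{main} with a double-counting on the coefficients of $\alpha\beta$. Set $C:=supp(\alpha)$ and $B:=supp(\beta)$; by replacing $\alpha$ with $a^{-1}\alpha/\alpha_a$ for some $a\in C$, I may assume $1\in C$ without changing $|C|=3$, the non-vanishing of $\alpha$ and $\beta$, or the identity $\alpha\beta=0$. Write $C=\{1,c_1,c_2\}$.

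First I would dispatch the subcase where $\langle C\rangle$ is abelian. A torsion-free abelian group is orderable, so $\mathbb{F}[\langle C\rangle]$ is a domain; decomposing $\beta=\sum_i\gamma_i g_i$ along the partition of $G$ into right cosets $\langle C\rangle g_i$ (with $\gamma_i\in\mathbb{F}[\langle C\rangle]$), each summand $\alpha\gamma_i g_i$ lies in its own coset, so $\alpha\beta=0$ forces $\alpha\gamma_i=0$, hence $\gamma_i=0$ for all $i$, contradicting $\beta\neq 0$. Thus $\langle C\rangle$ is non-abelian, and Theorem \ref{main} applied to the pair $(B^{-1},C^{-1})$ (which inherits $1\in C^{-1}$, $|C^{-1}|=3$, non-abelian $\langle C^{-1}\rangle=\langle C\rangle$, and $|B^{-1}|=|B|$) yields $|CB|=|B^{-1}C^{-1}|\ge|B|+5$ as soon as $|B|\ge 7$.

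On the other side, for every $g\in CB$ the vanishing of the coefficient $\sum_{cb=g}\alpha_c\beta_b$, together with $\alpha_c,\beta_b\neq 0$, forces the representation count $r(g):=|\{(c,b)\in C\times B:cb=g\}|$ to be at least $2$. Because $1\in C$, one checks that $r(g)=2$ whenever $g\notin B$ and $r(g)\in\{2,3\}$ for $g\in B$, with $r(g)=3$ exactly when $g\in B\cap c_1B\cap c_2B$. Setting $k:=|B\cap c_1B\cap c_2B|$, the identity $\sum_{g\in CB}r(g)=|C||B|=3|B|$ rearranges to the exact formula $|CB|=(3|B|-k)/2$, and comparing with $|CB|\ge|B|+5$ yields $k\le|B|-10$, whence $|B|\ge 10$. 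The auxiliary small cases $|B|\le 6$ are handled identically using the weaker Hamidoune bound $|CB|\ge|B|+4$ from (\ref{Hamidoune}) in place of Theorem \ref{main}, which already forces $|B|\ge 8$.

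The hard part is excluding $|B|\in\{10,11\}$. In those two cases both inequalities become (near-)tight: $k\le 1$, every element of $CB\setminus B$ has exactly two representations, and at most one element of $B$ has three, producing a nearly perfect matching of the $3|B|$ pairs in $C\times B$ under the relation $(c,b)\sim(c',b')\iff cb=c'b'$. I would then invoke the structural classification of $(B,C)$ in the near-extremal regime of Theorem \ref{main}, building on the extremal analysis of Lemmas \ref{7part}, \ref{maintheorem}, \ref{mainresut7}, to show that this rigid combinatorics forces cascading linear relations of the form $\alpha_{c_i}\beta_{c_i^{-1}g}+\alpha_{c_j}\beta_{c_j^{-1}g}=0$ that cannot be simultaneously satisfied with all $\alpha_c,\beta_b$ non-zero. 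This case analysis is the technical heart of the proof; once $|B|\in\{10,11\}$ is eliminated, the conclusion $|supp(\beta)|\ge 12$ follows.
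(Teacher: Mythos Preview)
Your reduction to the cases $|B|\in\{10,11\}$ is correct and matches the paper's setup: the double-counting identity $|CB|=(3|B|-k)/2$ together with Theorem~\ref{main} (equivalently Corollary~\ref{mainresut7}) gives $|B|\ge 10$, exactly as the paper does. The gap is in your treatment of the two remaining cases. The lemmas you invoke (\ref{7part}, \ref{maintheorem}, \ref{mainresut7}) classify extremal $k$-atoms of cardinality $4$, $5$, $6$; they say nothing about the structure of a $10$- or $11$-element set $B$ with $|CB|=|B|+5$, and no such classification is developed anywhere in the paper. Your proposed ``cascading linear relations'' over an arbitrary field have no clear mechanism for producing a contradiction: nothing a priori prevents the coefficients $\alpha_c,\beta_b$ from being chosen so that every two- or three-term vanishing relation is simultaneously satisfied.

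The paper's actual argument for $|B|\in\{10,11\}$ bypasses the structure of $B$ entirely and instead exploits the representation counts themselves. When $|B|=10$ one is forced into $r(g)=2$ for every $g\in CB$, so the characteristic-two elements $\sum_{c\in C}c$ and $\sum_{b\in B}b$ in $\mathbb{F}_2[G]$ multiply to zero, contradicting Schweitzer's result \cite[Theorem~1.3]{PS} that no zero divisor with support of size $3$ exists. When $|B|=11$ exactly one $g$ has $r(g)=3$ and all others have $r(g)=2$, so the same product in $\mathbb{F}_2[G]$ equals a single group element, producing a non-trivial unit with support of size $3$ and contradicting \cite[Proposition~4.12]{a55}. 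The key idea you are missing is this passage to $\mathbb{F}_2$, which converts the combinatorial rigidity of the representation counts into a zero-divisor or unit problem over a specific field where the relevant bound is already known.
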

\begin{cor}\label{12}
Let $ \delta$ and $  \gamma$ be  elements of the group algebra of any torsion-free group over an
arbitrary field. If $|supp(\delta)| = 3$ and $\delta\gamma= 1$, then $|supp(\gamma)| \geq 11$.
\end{cor}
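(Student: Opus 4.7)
\textbf{Proof strategy for Corollary~\ref{12}.}
Set $C:=supp(\delta)$ and $B:=supp(\gamma)$.  Since $\delta\gamma=1$ we have $1\in CB$, so there is a pair $(c_0,b_0)\in C\times B$ with $c_0b_0=1$; replacing $\delta$ by $\delta b_0$ and $\gamma$ by $b_0^{-1}\gamma$ shifts $C$ to $Cb_0$ and $B$ to $b_0^{-1}B$ without changing the product $\delta\gamma$, the subgroup $\langle C\rangle$, or the cardinalities, and puts $1$ into both new supports.  The goal is to prove $|B|\geq 11$.

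The first step is to show that $H:=\langle C\rangle$ must be non-abelian.  If $H$ were abelian, then being torsion-free it would embed in the divisible abelian group $H\otimes\mathbb{Q}$ and hence be orderable, so $\mathbb{F}[H]$ is a commutative integral domain with only trivial units.  Decomposing $\mathbb{F}[G]=\bigoplus_i\mathbb{F}[H]\,x_i$ along a right transversal $\{x_i\}$ of $H$ (with $x_1=1$) and writing $\gamma=\sum_i\eta_i x_i$ with $\eta_i\in\mathbb{F}[H]$, matching coset components in $\delta\gamma=1$ gives $\delta\eta_1=1$ in $\mathbb{F}[H]$ and $\delta\eta_i=0$ for $i\neq1$.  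Commutativity of $\mathbb{F}[H]$ turns the first equation into a two-sided inverse, which kills the remaining $\eta_i$; triviality of units in $\mathbb{F}[H]$ then forces $\delta=\lambda h$, contradicting $|C|=3$.

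With $\langle C\rangle$ non-abelian and $|C|=3$, Theorem~\ref{main} yields $|BC|\geq|B|+5$ whenever $|B|\geq 7$ (and already for $|B|\geq 5$ if $\langle C\rangle$ is not the Klein bottle group).  On the other hand, since $\delta\gamma$ has support $\{1\}$, every $g\in BC\setminus\{1\}$ must admit at least two distinct factorizations $g=cb$ in $C\times B$: otherwise its lone contribution $\delta_c\gamma_b\neq 0$ would fail to cancel.  Counting the $3|B|$ pairs then gives
\[
3|B|\;\geq\;1+2\bigl(|BC|-1\bigr),\qquad\text{i.e.}\qquad |BC|\;\leq\;\tfrac{3|B|+1}{2}.
\]
Combining the two bounds already forces $|B|\geq 9$.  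The regime $4\leq|B|\leq 6$ is dispatched by the same upper bound together with Hamidoune's inequality $|BC|\geq|B|+|C|+1$, while $|B|\leq 3$ is excluded by Kemperman's bound (using that equality in Kemperman would force $\langle C\rangle$ to be cyclic, hence abelian).

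The hard part will be to eliminate $|B|\in\{9,10\}$, where both inequalities are saturated.  In these cases $|BC|=|B|+5$ and almost every $g\in BC\setminus\{1\}$ has exactly two preimages in $C\times B$.  I plan to exploit this rigid multiplicity pattern as follows: with $C=\{1,u,v\}$, the identity $|BC|=|B\cup uB\cup vB|$ together with inclusion–exclusion pins down the overlap sizes $|B\cap uB|$, $|B\cap vB|$, and $|uB\cap vB|$, while at each two-preimage element $g$ the cancellation equation $\delta_c\gamma_{c^{-1}g}+\delta_{c'}\gamma_{{c'}^{-1}g}=0$ fixes a non-zero ratio between two of the $\gamma_b$'s (or $\delta_c$'s).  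Propagating these ratios around the natural ``cancellation graph'' on $B$, any closed loop must produce either an impossible torsion relation in $G$ or a forced collapse of $|supp(\delta)|$ below $3$, giving the desired contradiction.  Executing this case analysis carefully—especially in the Klein bottle subcase, where the sharper form of Theorem~\ref{main} is unavailable at small $|B|$—is the technical heart of the argument.
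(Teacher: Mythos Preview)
First, note a discrepancy in the paper itself: the proof actually supplied (the final corollary of Section~3) establishes only $|supp(\gamma)|\geq 10$, which is also what the abstract claims; the ``$\geq 11$'' in the statement of Corollary~\ref{12} appears to be a typo. Your plan to eliminate both $|B|=9$ and $|B|=10$ therefore aims beyond what the paper proves.

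Up to the bound $|B|\geq 9$ your argument is essentially the paper's: the same sandwich $|B|+5\leq |BC|\leq (3|B|+1)/2$ coming from Theorem~\ref{main} and the multiplicity count. (Two small points: the paper simply quotes \cite[Theorem~1.7]{AT} instead of redoing the cases $|B|\leq 8$; and it first passes from $\delta\gamma=1$ to $\gamma\delta=1$ via the anti\nobreakdash-involution so that $B$ sits on the left and Theorem~\ref{main} applies verbatim---you should do the same rather than writing $BC$ while the cancellations actually live in $CB$.) The genuine divergence is at $|B|=9$. There the multiplicity pattern is forced to be $r_{BC}(1)=1$ and $r_{BC}(g)=2$ for every $g\neq 1$, and the paper exploits this with a one-line reduction you are missing: setting $\alpha=\sum_{b\in B}b$ and $\beta=\sum_{c\in C}c$ in $\mathbb{F}_2[G]$ yields $\alpha\beta=1$, a non\nobreakdash-trivial unit over $\mathbb{F}_2$ with support sizes $(9,3)$, which is excluded by Dykema--Heister--Juschenko \cite[Proposition~4.12]{a55}. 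This single observation replaces your whole cancellation-graph programme for $|B|=9$; your ratio-propagation idea is not unreasonable, but it amounts to redoing by hand the $\mathbb{F}_2$ analysis that \cite{a55} has already packaged, and the rigid multiplicity pattern makes the reduction to $\mathbb{F}_2$ automatic regardless of the original field. The case $|B|=10$, where the pattern is no longer rigid (one may have $r_{BC}(1)=2$, or $r_{BC}(1)=1$ together with a single triple point elsewhere), is not treated in the paper at all.
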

Corollaries \ref{12} and \ref{11} improve the lower bounds of 
10 and 9  in    \cite[Theorem 1.7]{AT} and \cite[Theorem 1.4]{AT}, respectively.
\begin{table}
\begin{tabular}{|c|c|c|c|c|c|}
\hline
$G$&  $\left\langle C\right\rangle$  &$|C|$& $|B|$& Lower bound for $|BC|$& Reference\\ \hline
\multirow{4}{*}{Torsion-free}&Arbitrary&$\geq 2$  & $\geq 2$ & $ |B|+|C|-1$  &\cite{set2} \\ \cline{2-6}
& \multirow{2}{*}{Non-abelian}& $\geq 3$  & $\geq 4$ & $ |B|+|C|+1$  &\cite{Hamidoune} \\\cline{3-6}
& & \cellcolor{black!20!white}{$=3$}  & \cellcolor{black!20!white}{$\geq 7$} & \cellcolor{black!20!white}{$ |B|+5$}  & \cellcolor{black!20!white}{Theorem \ref{main}}\\\cline{2-6}
& \cellcolor{black!20!white}{Non-abelian, $\ncong$ Klein bottle group}&  \cellcolor{black!20!white}{$=3$}  & \cellcolor{black!20!white}{$\geq 5$} & \cellcolor{black!20!white}{$ |B|+5$}  &\cellcolor{black!20!white}{Theorem \ref{main}} \\\hline
\cellcolor{black!20!white}{Unique product} & \cellcolor{black!20!white}{Non-abelian} & \cellcolor{black!20!white}{$\geq 3$}  & \cellcolor{black!20!white}{$\geq 7$} & \cellcolor{black!20!white}{$ |B|+|C|+2$}  &\cellcolor{black!20!white}{Theorem \ref{uptable}} \\\hline
\end{tabular}
\caption{  Results  for the  lower bound of $|BC|$, where $B$ and $ C$ are two finite subsets of a group $G$ and $1\in C$. The gray rows show the result in this paper.}\label{10101}
\end{table}
\section {\bf Preliminaries}
Let $G$ be a torsion-free group  and  $C$ be a finite subset of $G$ containing the identity element. For
$X\subseteq G$, 
$
\partial_C (X):=XC\setminus X.
$
For every positive integer $k$, the $k$-isoperimetric number of $C$ is
\[
\kappa_k(C):=\text{min}\big{\{}|\partial_C (X)|\;\big{|}\;X\subseteq G, k\leq |X|<\infty \big{\}}.
\]
A finite subset $X$ of $G$ is a $k$-critical set of $C$ if $|X|\geq  k$ and $ |\partial_C (X)|= \kappa_k(C)$. A $k$-atom
of $C$ is a $k$-critical set of $C$ with minimal cardinality. The cardinality of a $k$-atom of $ C $ will be denoted by $\alpha_k(C)$.\\
The minimality of the cardinality of atoms directly yields the following lemma that we repeatedly  use in the  paper.
\begin{lem}{\rm(}\cite[Lemma 4]{Hamidoune}{\rm)}\label{lem4H}
Let $1\in C$ be a finite generating subset of a torsion-free group $G$. Let $A$
be a $k$-atom of $C$ such that $|A|>k$. Then  $|zC^{-1} \cap  A| \geq 2$  for all  $z \in AC$. Moreover, 
$|A|(|C| - 2)\geq 2\kappa_ k (C)$.
\end{lem}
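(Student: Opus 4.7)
The plan is to derive the first assertion by a minimality argument on the atom $A$, and then extract the moreover inequality from a direct double count.

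For the first part I would argue by contradiction. Suppose some $z\in AC$ satisfies $|zC^{-1}\cap A|\leq 1$; writing $z=ac$ with $a\in A$ and $c\in C$ shows $a\in zC^{-1}\cap A$, so the intersection is exactly $\{a\}$. The natural candidate for a smaller critical set is $A':=A\setminus\{a\}$, and since $|A|>k$ we have $|A'|\geq k$, so the defining inequality $|\partial_C(A')|\geq\kappa_k(C)$ is automatically available. If I can show $|\partial_C(A')|\leq\kappa_k(C)$, then $A'$ becomes a $k$-critical set strictly smaller than the atom $A$, contradicting the minimality in the definition of $A$. The key observation for the boundary comparison is that $z\notin A'C$: any expression $z=a'c'$ with $a'\in A'$ would force $a'\in zC^{-1}\cap A$ with $a'\neq a$, contradicting the singleton description of the intersection. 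Because $1\in C$ gives $A\subseteq AC$ and $A'\subseteq A'C$, each boundary simplifies to $|\partial_C(X)|=|XC|-|X|$; combined with $A'C\subseteq AC$ and $z\in AC\setminus A'C$ this yields $|A'C|\leq|AC|-1$ and hence $|\partial_C(A')|\leq(|AC|-1)-(|A|-1)=|\partial_C(A)|=\kappa_k(C)$, as needed.

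For the moreover part the plan is to invoke the double counting identity
\[
\sum_{z\in AC}|zC^{-1}\cap A|=|A|\cdot|C|,
\]
which holds because each pair $(a,c)\in A\times C$ contributes the single element $a$ to $zC^{-1}\cap A$ at $z=ac$, the uniqueness following from cancellation in $G$. The bound $|zC^{-1}\cap A|\geq 2$ from the first part, together with $|AC|=|A|+\kappa_k(C)$, then gives $|A||C|\geq 2(|A|+\kappa_k(C))$, which rearranges to $|A|(|C|-2)\geq 2\kappa_k(C)$.

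I do not anticipate a substantive obstacle: the whole argument rests only on cancellation in $G$, on the hypothesis $1\in C$ (which turns $|\partial_C(X)|$ into the clean quantity $|XC|-|X|$), and on the minimality clause in the definition of a $k$-atom. The only point to be careful about is not to lose the hypothesis $|A|>k$ when passing to $A'$, since this is precisely what keeps $|A'|\geq k$ and therefore $A'$ eligible to be a $k$-critical set.
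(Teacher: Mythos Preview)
Your argument is correct and is the standard proof of this lemma. Note that the paper does not actually supply its own proof of this statement---it is quoted verbatim from \cite[Lemma~4]{Hamidoune} and used as a black box---so there is no in-paper proof to compare against; your minimality-plus-double-counting argument is exactly the one given in the cited source.
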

\begin{defn}\label{set}
Let $B$ and $C$ be  finite non-empty subsets of $G$.  For each element $ x\in BC$, denote by  $R_{BC}(x)$ the set $\{(b,c)\in B\times C\;|\; x=bc\}$ and let $r_{BC}(x):=|R_{BC}(x)| $. Clearly,  $ r_{BC}(x)\leq min\{|B|,|C|\} $ for all  $ x\in BC $.
\end{defn} 
In \cite{Hamidoune1}, Y.O. Hamidoune proposed the following conjecture:
\begin{Conj}\label{con1}
Let $G$ be a torsion-free group,  $C$ a finite  subset of $G$ containing the identity element and $n$  a positive integer. Then $|\alpha_n(C)| = n$.
\end{Conj}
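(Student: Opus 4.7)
The plan is to argue by induction on $n$, deriving a contradiction if the $n$-atom $A$ of $C$ has cardinality strictly larger than $n$. The base case $n=1$ is immediate: for any $a\in G$, since $1\in C$ we have $|\partial_C(\{a\})|=|aC\setminus\{a\}|=|C|-1$, hence $\kappa_1(C)\leq|C|-1$. Conversely, by \ref{kemperman} every $X$ with $|X|\geq 2$ satisfies $|XC\setminus X|\geq|C|-1$, so $\kappa_1(C)=|C|-1$ is attained by a singleton, and the smallest $1$-atom has cardinality exactly $1$.

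Assume the statement for every $k<n$, and let $A$ be an $n$-atom of $C$ with $|A|>n$. Then Lemma \ref{lem4H} applies, supplying both the double-representation property $|zC^{-1}\cap A|\geq 2$ for every $z\in AC$ and the bound $|A|(|C|-2)\geq 2\kappa_n(C)$. The goal is to produce a proper subset $A'\subsetneq A$ with $|A'|\geq n$ and $|\partial_C(A')|\leq\kappa_n(C)$, which would contradict the minimality of $|A|$ among $n$-critical sets. A concrete attempt: choose $a_0\in A$ whose removal minimizes the increase of the boundary, set $A'=A\setminus\{a_0\}$, and compare $\partial_C(A')$ with $\partial_C(A)$. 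Newly exposed boundary elements of $A'C\setminus A'$ arise from $a_0C\cap A$, while elements of $\partial_C(A)$ sitting in $(A\setminus\{a_0\})C$ remain boundary of $A'$; the double-representation property ensures that each $z\in\partial_C(A)$ still has a preimage in $A\setminus\{a_0\}$ whenever $a_0$ is chosen generically, so the boundary does not increase. Iterating brings $|A|$ down towards $n$.

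The principal obstacle is making this counting argument sharp. Lemma \ref{lem4H} gives only pointwise information at each $z\in AC$, but translating it into a genuine global decrease of $|\partial_C(A')|$ requires substantially more structural input, because the sets $a_0C\cap A$, $a_0C\cap\partial_C(A)$, and $\partial_C(A)\cap(A\setminus\{a_0\})C$ interact in ways that purely local bounds cannot control. A sharpening would likely combine the inductive hypothesis, via the embedding of smaller $k$-atoms (of cardinality $k<n$ by induction) inside $A$, with the torsion-freeness of $G$ used to preclude the periodic coincidences among translates of subsets of $A$ that would otherwise allow $|A|>n$. The fact that this conjecture of Hamidoune has remained open despite the full power of the isoperimetric method indicates that any complete proof will need a decisive new structural ingredient beyond what is already available from the excerpt, and that is the step I would expect to be the hardest.
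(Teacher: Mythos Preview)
The statement you are attempting to prove is Conjecture~\ref{con1}, and the paper does \emph{not} prove it. Immediately after stating it, the authors write that it ``is still open and attempts to confirm it even for $n=2$ were unsuccessful''; the paper only records that it holds for unique product groups and for $n=1$, and then gives the partial result Theorem~\ref{alpha}. So there is no ``paper's own proof'' to compare against, and any complete argument here would resolve an open problem.

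Beyond that, your inductive step contains a concrete error that makes the approach run in the wrong direction. You remove $a_0$ from $A$ and hope that the double-representation property from Lemma~\ref{lem4H} keeps $|\partial_C(A')|$ from increasing. In fact it forces the opposite: since every $z\in AC$ satisfies $|zC^{-1}\cap A|\geq 2$, removing a single $a_0$ loses no element of $AC$, so $A'C=AC$. Because $1\in C$ we have $A'\subseteq A'C$, hence
\[
|\partial_C(A')|=|A'C|-|A'|=|AC|-(|A|-1)=|\partial_C(A)|+1=\kappa_n(C)+1,
\]
strictly larger than $\kappa_n(C)$. Thus $A'$ is never $n$-critical, and no choice of $a_0$ (generic or otherwise) gives the contradiction you seek. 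This is precisely why Lemma~\ref{lem4H} is an \emph{obstruction} rather than a tool for shrinking atoms: it says that whenever $|A|>n$, deleting any single element raises the boundary. Your closing paragraph correctly senses that a new structural ingredient is needed, but the preceding sketch does not supply one.
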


In view of \cite[p. 5, lines 27-31]{palfy}, Conjecture {\rm\ref{con1}} holds valid for unique product groups. Conjecture \ref{con1} is also valid for $ n=1 $ (see the proof of \cite[Proposition 2.8]{Hamidoune2} or \cite[Corollary 2]{Hamidoune}).  Conjecture \ref{con1} is still open and attempts to confirm it even  for $ n=2 $ were  unsuccessful see e.g. \cite{Hamidoune,palfy}.

It was known that some problems in ``product of finite subsets of groups" are related to the problem of 
``absence of zero divisors in the group ring of a torsion-free group over an
integral domain" (see \cite[p. 463, lines 5-8]{BF}). Here we  show that Conjecture \ref{con1} is related to
zero divisor and unit conjectures on group algebras of torsion-free groups. Let us first state the latter conjectures.  

In 1940, Irving Kaplansky \cite{kaplansky} stated his well known conjecture so-called zero divisor conjecture, as follows:
\begin{Conj} \label{zk} Let $  \mathbb{F}$ be any field and $G$ any torsion-free group. Then $\mathbb{F}[G]$ contains no  zero divisor, where a zero divisor is a non-zero element $\alpha\in \mathbb{F}[G]$  such that $ \alpha\beta=0 $ for some non-zero element $\beta \in \mathbb{F}[G]$. 
\end{Conj}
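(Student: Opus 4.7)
The final statement is Kaplansky's zero divisor conjecture, open since 1940, so what follows is a realistic strategy rather than a claimed solution. The plan is to use the sumset lower bounds developed in this paper, especially Theorems \ref{uptable} and \ref{main}, as the principal combinatorial weapon against a putative pair $(\alpha,\beta)$ with $\alpha\beta = 0$.

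First I would make the standard reductions: given non-zero $\alpha,\beta \in \mathbb{F}[G]$ with $\alpha\beta = 0$, set $A := \mathrm{supp}(\alpha)$ and $B := \mathrm{supp}(\beta)$, translate on the left and right so that $1 \in A$ and $1 \in B$, and pass to the finitely generated subgroup $\left\langle A \cup B\right\rangle$ (still torsion-free). The algebraic content of $\alpha\beta = 0$ is that every element $x$ of the product set $AB$ must occur at least twice in the convolution: $r_{AB}(x) \geq 2$ in the notation of Definition \ref{set}, for otherwise the coefficient of $x$ in $\alpha\beta$ would be a single non-zero term. Summing over $x \in AB$ gives the fundamental upper bound
\[
|AB| \;\leq\; \tfrac{1}{2}|A|\,|B|.
\]

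The strategy is then to play this inequality against the lower bounds for $|BC|$ developed here. Corollary \ref{11} is the prototype: with $|A| = 3$, Theorem \ref{main} forces $|AB| \geq |B| + 5$ as soon as $\left\langle A\right\rangle$ is non-abelian (the abelian case being classical), and combining with $|AB| \leq \tfrac{3}{2}|B|$, after separately treating small $|B|$ and the Klein bottle case, yields $|B| \geq 12$. For the full Conjecture \ref{zk} one would need analogous sumset lower bounds of the form $|AB| > \tfrac{1}{2}|A|\,|B|$ in every torsion-free group, for every pair $(A,B)$ with $|A|,|B|\geq 2$. A natural route is to iterate the isoperimetric / $k$-atom method behind Lemma \ref{lem4H} through the multigraph $S(B,C)$, leveraging the non-abelian hypothesis on $\left\langle A\cup B\right\rangle$ to keep the improvement growing as $|A|$ grows; a resolution of Conjecture \ref{con1} would deliver much finer atom structure and likely tighten these inequalities considerably.

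The main obstacle — and the reason the conjecture has resisted for over eighty years — is the quadratic-versus-linear gap: the algebraic constraint $r_{AB}(x) \geq 2$ furnishes only the upper bound $|AB| \leq \tfrac{1}{2}|A|\,|B|$, of quadratic order, while every sumset lower bound presently available in the non-abelian torsion-free setting is of linear order $|A| + |B|$. Closing this gap calls for either a genuinely new quantitative form of the unique product property providing a multiplicative-strength lower bound on $|AB|$, or a structural theorem forcing torsion in $\left\langle A \cup B\right\rangle$ from the hypothesis that \emph{every} element of $AB$ has multiplicity at least two. Neither is within current reach; the contribution of the present paper is to push the linear-type bounds as far as the atom method allows, thereby settling the smallest non-trivial instance in Corollary \ref{11} over arbitrary fields.
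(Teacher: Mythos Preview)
The statement you were asked to prove is Conjecture~\ref{zk}, Kaplansky's zero divisor conjecture. The paper does not prove it; it is stated purely as an open conjecture, used only as background and motivation (see Propositions~\ref{conz} and~\ref{conu}, which show how certain atom configurations would yield counterexamples). There is therefore no ``paper's own proof'' to compare against.

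You recognized this correctly and, rather than claiming a proof, outlined a strategy. That is the honest response. Your discussion is accurate and well aligned with the paper's viewpoint: the reduction to $r_{AB}(x)\geq 2$ and the resulting upper bound $|AB|\leq \tfrac{1}{2}|A|\,|B|$ is exactly the mechanism behind Propositions~\ref{conz} and~\ref{conu}, and your identification of the quadratic-versus-linear gap as the essential obstruction is the right diagnosis of why the isoperimetric and atom methods in this paper yield only the small-support results (Corollaries~\ref{11} and~\ref{12}) rather than the full conjecture. One minor quibble: you write that you would pass to $\langle A\cup B\rangle$, but the paper's reductions (and those in \cite{AT,AJ}) typically pass to $\langle C\rangle$ for one of the supports after a translation, which is what makes the hypothesis ``$\langle C\rangle$ non-abelian'' in Theorems~\ref{uptable} and~\ref{main} the operative one; this is cosmetic and does not affect your overall picture.
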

Another famous conjecture of Kaplansky on group algebras so-called unit conjecture, is the following \cite{kaplansky}:
\begin{Conj}\label{uk}
Let $\mathbb{F}$ be any field and $G$ any torsion-free group. Then $\mathbb{F}[G]$ contains no  non-trivial units, where  trivial units are non-zero scalar multiples of group elements.
\end{Conj}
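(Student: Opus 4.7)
Kaplansky's unit conjecture, open since 1940, lies far beyond the reach of any brief sketch; what follows is only a strategic plan consonant with the paper's methods. Suppose, towards a contradiction, that $\alpha\beta=1$ with $A:=supp(\alpha)$ and $B:=supp(\beta)$ each of cardinality at least two, and, after left- and right-translating and absorbing scalars, that $1\in A\cap B$. Expanding $\alpha\beta$ on the group basis gives $\sum_{ab=x}\alpha_a\beta_b=0$ for every $x\in AB\setminus\{1\}$; since each summand is non-zero by the support hypothesis, every such $x$ must admit at least two representations as $ab$, so counting pairs yields the multiplicative-type upper bound $|AB|\leq\tfrac12(|A||B|+1)$.

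The second step is to oppose this with a multiplicative lower bound on $|AB|$. The abelian case is classical (torsion-free abelian groups are orderable), so one may assume $\langle A\cup B\rangle$ is non-abelian and, after a translation of $A$, that $\langle A\rangle$ is non-abelian as well. For $|A|=3$ the desired conclusion $|B|\geq 11$ is already Corollary \ref{12}. For general $|A|$, the program is to strengthen Theorems \ref{uptable} and \ref{main} to a bound of the form $|BA|\geq c|A||B|$ with $c>\tfrac12$, or equivalently to produce an additive gap $|BA|\geq |B|+|A|+g(|A|,|B|)$ growing faster than $|A|+|B|$. The paper's additive bounds $|BA|\geq |B|+|A|+2$ already force a contradiction in the regime $(|A|-2)(|B|-2)<7$, disposing of finitely many small configurations (which one would then finish by case analysis, in the spirit of Lemmas \ref{maintheorem} and \ref{mainresut7}).

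The genuinely hard part is the multiplicative gain required in step two: in non-abelian, non-unique-product torsion-free groups one loses both the Pl\"unnecke--Ruzsa machinery of the abelian world and the unique-representation trick that settles the conjecture for unique product groups. A realistic deliverable of this plan would therefore be a conditional reduction of the unit conjecture to a sharp quantitative form of the $n$-atom Conjecture \ref{con1}, combined with a refined analysis of the multigraph $S(B,A)$ from Definition \ref{defgraph}; an unconditional proof must bring in genuinely new ingredients, for instance algebraic identities among the coefficients $\alpha_a,\beta_b$ beyond what the supports alone encode, or structural information on non-unique-product torsion-free groups beyond the Rips--Segev construction.
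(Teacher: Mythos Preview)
The statement you are attempting to prove is Conjecture~\ref{uk}, Kaplansky's unit conjecture. The paper does not prove it; it is merely recorded as a famous open problem and then linked to Conjecture~\ref{con1} via Proposition~\ref{conu}. There is therefore no paper proof against which to compare your proposal, and you yourself correctly flag at the outset that what follows is only a strategic plan rather than a proof.

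More to the point, the conjecture is now known to be \emph{false}. Gardam (2021) exhibited a non-trivial unit in $\mathbb{F}_2[G]$ for $G$ the Promislow (Hantzsche--Wendt) group, a torsion-free non-unique-product group; subsequent work of Murray extended this to all positive characteristics. Consequently no strategy of the kind you outline can succeed. Concretely, your step two asks for a lower bound $|BA|\geq c|A||B|$ with $c>\tfrac12$ valid in every torsion-free group, and this is precisely what fails: in Gardam's counterexample both supports have size $21$, so your (correct) upper bound $|AB|\leq\tfrac12(|A||B|+1)=221$ is compatible with the existence of the unit, and no multiplicative lower bound with constant exceeding $\tfrac12$ can hold in general. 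The proposed reduction to Conjecture~\ref{con1} is also blocked: Proposition~\ref{conu} only manufactures a counterexample to the unit conjecture from a specific failure of Conjecture~\ref{con1}, not the other way around, so even a complete proof of Conjecture~\ref{con1} would not deliver Conjecture~\ref{uk}. A smaller slip: for $|A|=3$, Corollary~\ref{12} gives $|supp(\beta)|\geq 11$, which is a lower bound, not the contradiction you need; it does not settle the $|A|=3$ case of the unit conjecture. Your plan is a reasonable sketch of how one might have hoped to proceed before 2021, but the target statement is simply not true.
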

Conjecture \ref{uk} is actually  stronger than   Conjecture \ref{zk} so that  the affirmative solution to Conjecture \ref{uk} implies the positive one for Conjecture \ref{zk} \cite[Lemma 13.1.2]{PI}.
Partial results have been obtained on  Conjectures \ref{uk} and \ref{zk}  \cite{Brown, a2, a13,  a5, a14, a6, PI, a1}.\\
Now we show connections of  Conjectures \ref{uk} and \ref{zk} to Conjecture \ref{con1} in the following two propositions.
\begin{prop}\label{conz}
Let $G$ be a torsion-free group and  $C$ be a finite  subset of $G$ containing the identity element such that $|\alpha_k(C)| > k$. If $A$ is a $k$-atom of $C$ and $|AC|= \frac{|A||C|}{2}$, then there exists a counterexample to Conjecture {\rm\ref{zk}}. 
\end{prop}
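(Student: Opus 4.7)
The plan is to construct an explicit non-zero zero divisor in $\mathbb{F}_2[G]$ directly from the configuration $(A,C)$; this will immediately produce a torsion-free group violating Conjecture \ref{zk}. The underlying idea is that the hypothesis $|AC|=|A||C|/2$, combined with Lemma \ref{lem4H}, forces every element of $AC$ to have exactly two representations as a product from $A\times C$, and in characteristic $2$ these representations cancel.

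First I would translate the hypothesis $|\alpha_k(C)|>k$ into the assertion $|A|>k$ for the given $k$-atom $A$ (passing, if needed, to the subgroup $\langle C\rangle$ so that $C$ generates the ambient group, as required by Lemma \ref{lem4H}). That lemma then gives $|zC^{-1}\cap A|\geq 2$ for every $z\in AC$. In the notation of Definition \ref{set}, elements of $zC^{-1}\cap A$ are in bijection with the pairs $(a,c)\in A\times C$ satisfying $ac=z$, so this inequality reads $r_{AC}(z)\geq 2$ for every $z\in AC$.

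The second step is a double count. Since $\sum_{z\in AC}r_{AC}(z)=|A||C|$ and $|AC|=|A||C|/2$, the average value of $r_{AC}$ on $AC$ equals $2$; combined with the pointwise bound $r_{AC}(z)\geq 2$ just established, equality must hold everywhere, so $r_{AC}(z)=2$ for every $z\in AC$.

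Finally, I would set $\alpha:=\sum_{a\in A}a$ and $\beta:=\sum_{c\in C}c$ as elements of the group algebra $\mathbb{F}_2[G]$ over the field with two elements. Expanding,
\[
\alpha\beta=\sum_{z\in AC}r_{AC}(z)\,z=\sum_{z\in AC}2z=0 \quad \text{in } \mathbb{F}_2[G],
\]
while $\alpha$ and $\beta$ are manifestly non-zero, so $(\alpha,\beta)$ is the desired zero divisor pair. Beyond the choice of working in characteristic $2$ — which is the only non-routine step — everything reduces to Lemma \ref{lem4H} together with elementary double counting, so I do not anticipate any serious obstacle.
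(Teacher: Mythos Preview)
Your proposal is correct and follows essentially the same route as the paper: invoke Lemma~\ref{lem4H} to obtain $r_{AC}(z)\geq 2$ for all $z\in AC$, use the double count $\sum_z r_{AC}(z)=|A||C|=2|AC|$ to force $r_{AC}(z)=2$ everywhere, and then observe that $\alpha=\sum_{a\in A}a$ and $\beta=\sum_{c\in C}c$ multiply to zero in $\mathbb{F}_2[G]$. Your remark about passing to $\langle C\rangle$ so that the generating hypothesis of Lemma~\ref{lem4H} is met is a reasonable bit of care that the paper leaves implicit.
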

\begin{proof}
Since $|\alpha_k(C)| > k$,   $ r_{AC}(x)\geq 2 $ for all $ x\in AC $ (see Lemma \ref{lem4H}). Now it follows from $|AC|= \frac{|A||C|}{2}$  that  $r_{AC}(x)=2$ for all $x\in AC$.  We now define $ \alpha $ and $ \beta $, respectively, as the elements $ \alpha:=\sum_{a\in A}a $ and $ \beta:=\sum_{c\in C}c $ in the group algebra $\mathbb{F}_2[G]$ of $G$ over the field $\mathbb{F}_2$ with $2$
elements. Therefore   $ \alpha\beta=\sum_{x\in AC} r_{AC}(x) x=0$. This completes the proof.
\end{proof}
\begin{prop}\label{conu}
Let $G$ be a torsion-free group and  $C$ be a finite  subset of $G$ containing the identity element such that $|\alpha_k(C)| > k$. If $A$ is a $k$-atom of $C$ and $|AC|= \frac{|A||C|-1}{2}$, then there exists a counterexample to Conjecture {\rm\ref{uk}}. 
\end{prop}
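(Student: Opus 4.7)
The plan is to mimic the proof of Proposition~\ref{conz}, but to exploit a parity surplus rather than a parity balance. Since $|\alpha_k(C)|>k$ forces $|A|>k$, Lemma~\ref{lem4H} yields $r_{AC}(x)\ge 2$ for every $x\in AC$. Combining this with the standard double count $\sum_{x\in AC} r_{AC}(x)=|A||C|$ and the hypothesis $|AC|=\frac{|A||C|-1}{2}$, I obtain
\[
\sum_{x\in AC}\bigl(r_{AC}(x)-2\bigr)=|A||C|-2|AC|=1.
\]
Hence there is a single distinguished element $x_0\in AC$ with $r_{AC}(x_0)=3$, while $r_{AC}(x)=2$ for every other $x\in AC$.

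Next I would transfer this combinatorial picture into the group algebra $\mathbb{F}_2[G]$. Setting $\alpha:=\sum_{a\in A}a$ and $\beta:=\sum_{c\in C}c$, one has $\alpha\beta=\sum_{x\in AC} r_{AC}(x)\,x$. Reducing coefficients modulo~$2$ annihilates every term with $r_{AC}(x)=2$ and leaves precisely the term $1\cdot x_0$ coming from $r_{AC}(x_0)=3\equiv 1\pmod{2}$. Therefore $\alpha\beta=x_0$ in $\mathbb{F}_2[G]$, so multiplying on the left by $x_0^{-1}$ gives $(x_0^{-1}\alpha)\beta=1$.

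To finish, I would verify that $x_0^{-1}\alpha$ is a non-trivial unit: its support is $x_0^{-1}A$, of cardinality $|A|>k\ge 1$, hence at least $2$, so $x_0^{-1}\alpha$ is not a scalar multiple of any group element. This exhibits the desired counterexample to Conjecture~\ref{uk}. The only step that requires genuine care is the parity count in the first paragraph, where Lemma~\ref{lem4H} must be invoked to guarantee $r_{AC}(x)\ge 2$ everywhere so that the surplus can be pinned down to a single coefficient; once the existence and uniqueness of $x_0$ are in hand, the remainder is a formal translation of the set-theoretic data into $\mathbb{F}_2[G]$, essentially identical to the end of the proof of Proposition~\ref{conz}.
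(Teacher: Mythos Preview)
Your proof is correct and follows essentially the same approach as the paper: both use Lemma~\ref{lem4H} and the double count to isolate a unique $x_0$ with $r_{AC}(x_0)=3$, then pass to $\mathbb{F}_2[G]$ to obtain a unit. The only cosmetic difference is that the paper writes $x_0=a_0c_0$ and presents the unit relation as $a_0^{-1}\alpha\beta c_0^{-1}=1$, whereas you multiply by $x_0^{-1}$ on one side; your explicit verification that the unit is non-trivial (via $|A|\ge 2$) is a point the paper leaves implicit.
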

\begin{proof}
Since $|\alpha_k(C)| > k$,  $r_{AC}(x)\geq 2 $ for all $ x\in AC $ (see Lemma \ref{lem4H}).   Now it follows from   $|AC|= \frac{|A||C|-1}{2}$  that  there exists $ x\in AC $ such that $ r_{AC}(x)= 3 $ and $ r_{AC}(x')= 2 $, for all $ x'\in AC\setminus \{x\}$. Let $ (a_0,c_0)\in R_{AC}(x) $ and define $ \alpha $ and $ \beta $,
respectively, as the elements
$ \alpha=\sum_{a\in A}a $ and
$ \beta=\sum_{c\in C}c $ in the group algebra $\mathbb{F}_2[G]$. Then  $$ a_0^{-1}\alpha\beta c_0^{-1}=a_0^{-1}\left(\sum_{y\in AC} r_{AC}(y) y\right) c_0^{-1}=a_0^{-1} (3 x) c_0^{-1}  =a_0^{-1} (a_0 c_0) c_0^{-1}= 1. $$
 This completes the proof.
\end{proof}
In what follows, we present two lemmas  which we use to prove Theorem  \ref{alpha}.
\begin{lem}{\rm(}\cite[Lemma 5]{Hamidoune}{\rm)}\label{lem5H}
Let $C$ be a finite generating subset of a torsion-free group $G$ such
that $|C|\geq 3$ and $1\in C$. Let $A$ be a $2$-atom of $C$. Then $|A|\leq |C|-1$.
\end{lem}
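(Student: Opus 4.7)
The plan is to argue by contradiction: suppose $|A|\geq|C|$ and use the isoperimetric structure of the $2$-atom to produce a $2$-critical set of cardinality strictly less than $|A|$, contradicting the minimality of $|A|$. Because $|A|\geq|C|\geq 3>2$, Lemma \ref{lem4H} applies and gives $|zC^{-1}\cap A|\geq 2$ for every $z\in AC$. Summing over $z\in AC$ yields $|A||C|=\sum_{z\in AC}r_{AC}(z)\geq 2|AC|$, hence $|AC|\leq |A||C|/2$ and $\kappa_2(C)=|AC|-|A|\leq |A|(|C|-2)/2$.

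I would next establish the atom intersection property via submodularity. For any $g\in G$, the left translate $gA$ is also a $2$-atom because $\partial_C(gA)=g\,\partial_C(A)$. The submodular inequality
\[
|\partial_C(A\cap gA)|+|\partial_C(A\cup gA)|\leq |\partial_C(A)|+|\partial_C(gA)|=2\kappa_2(C),
\]
combined with $|\partial_C(X)|\geq\kappa_2(C)$ whenever $|X|\geq 2$, forces the dichotomy: if $|A\cap gA|\geq 2$, then by minimality of $|A|$ one has $A\cap gA=A$, i.e., $A=gA$. Hence for every $g\neq 1$ either $gA=A$ or $|A\cap gA|\leq 1$. The stabiliser $H=\{g\in G:gA=A\}$ is a subgroup whose left-multiplication action on $A$ is fixed-point free (by torsion-freeness), so $|H|$ divides $|A|$; torsion-freeness then forces $H=\{1\}$, and consequently $|A\cap gA|\leq 1$ for every $g\neq 1$. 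Equivalently, the map $(a_1,a_2)\mapsto a_1 a_2^{-1}$ is injective on off-diagonal pairs of $A\times A$, giving $|AA^{-1}|\geq |A|^2-|A|+1$.

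The main obstacle is the final step: using these three ingredients together with $|A|\geq|C|$ to exhibit a 2-critical doubleton $\{a,a'\}\subseteq A$. Such a doubleton is $2$-critical precisely when $|aC\cap a'C|\geq 2|C|-2-\kappa_2(C)$, a threshold made explicit by $\kappa_2(C)\leq|A|(|C|-2)/2$. A natural route is a double count: every $z\in AC$ provides an overlap equation $a_1c_1=a_2c_2=z$ between some pair $\{a_1,a_2\}\subseteq A$, while the injectivity from the atom intersection property sharply bounds how many such overlaps a single pair can accommodate. Averaging the induced overlap sizes $|aC\cap a'C|$ over the $\binom{|A|}{2}$ pairs of $A$, combined with $|AC|\leq|A||C|/2$ and the hypothesis $|A|\geq|C|$, should produce at least one pair meeting the required threshold. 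This structural step is the crux of the proof and becomes especially delicate for small $|C|$ (notably $|C|=3$, where the inequality from Lemma \ref{lem4H} is nearly tight); once a $2$-critical doubleton is produced, it has cardinality $2<|C|\leq|A|$, contradicting the minimality of $|A|$ and completing the proof.
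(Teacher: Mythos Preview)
The paper does not supply its own proof of this lemma; it is quoted verbatim from Hamidoune--Llad\'o--Serra. Your ingredients are the right ones and are precisely those used there: from $|A|>2$ one obtains $r_{AC}(z)\ge 2$ for every $z\in AC$ (Lemma~\ref{lem4H}), and the submodularity argument gives $|A\cap gA|\le 1$ for every $g\ne 1$, whence the map $(a,a')\mapsto a^{-1}a'$ is injective on ordered off-diagonal pairs of $A\times A$.

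The gap is your final step. The plan of locating a $2$-critical doubleton by averaging does not go through. Take the smallest case to be excluded, $|C|=3$ and $|A|=4$ with $\kappa_2(C)=2$: a doubleton $\{a,a'\}$ is $2$-critical only if $|aC\cap a'C|\ge 2|C|-2-\kappa_2(C)=2$, yet the average of $|aC\cap a'C|$ over the $\binom{4}{2}=6$ unordered pairs equals $\tfrac{1}{6}\sum_z\binom{r_z}{2}$, and when every $r_z=2$ this average is exactly $1$. So averaging cannot supply the required pair, and nothing you have written excludes this configuration.

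The correct endgame is a global double count rather than a local search. Writing $|aC\cap a'C|=|C\cap (a^{-1}a')C|$ and using that the $|A|(|A|-1)$ ratios $a^{-1}a'$ are pairwise distinct non-identity elements, one has
\[
\sum_{z\in AC} r_z(r_z-1)\;=\;\sum_{\substack{(a,a')\in A^2\\ a\ne a'}}|C\cap(a^{-1}a')C|\;\le\;\sum_{g\ne 1}|C\cap gC|\;=\;|C|^{2}-|C|.
\]
On the other hand $r_z\ge 2$ gives $r_z(r_z-1)\ge r_z$, so the left-hand side is at least $\sum_z r_z=|A|\,|C|$. Combining, $|A|\,|C|\le |C|(|C|-1)$, i.e.\ $|A|\le|C|-1$. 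The point is that the atom-intersection injectivity is used to bound the \emph{total} collision count from above, not to force one particular pair to carry a large overlap.
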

\begin{lem}{\rm(}\cite[Lemma 5]{palfy}{\rm)}\label{lem5P}
For a torsion-free group $G$ and $n \geq 2$, if $A$ is an $n$-atom for $C \subseteq G$ and
$g \in G\setminus\{ 1\}$, then
\[
|A\cap Ag|\leq \frac{n-2}{n-1}|A|+\frac{1}{n-1}\leq \frac{n}{n-1}|A|.
\]
\end{lem}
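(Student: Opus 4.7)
The second inequality $\frac{n-2}{n-1}|A|+\frac{1}{n-1}\le\frac{n}{n-1}|A|$ is immediate from $|A|\ge 1$. For the first, note that $\frac{(n-2)|A|+1}{n-1}\ge n-1$ whenever $|A|\ge n$, so if $|H|:=|A\cap Ag|\le n-1$ there is nothing to prove; I therefore assume $|H|\ge n$. Since $G$ is torsion-free and $g\ne 1$, the infinite cyclic group $\langle g\rangle$ cannot stabilise the finite set $A$ on the right, so $Ag\ne A$ and hence $H\subsetneq A$.

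The engine of the proof is the submodular inequality for the right-boundary operator, obtained from $HC\subseteq AC\cap(Ag)C$ together with $(A\cup Ag)C=AC\cup(Ag)C$:
\[
|\partial_C(H)|+|\partial_C(A\cup Ag)|\le |\partial_C(A)|+|\partial_C(Ag)|.
\]
Atomicity of $A$ gives $|\partial_C(A)|=\kappa_n(C)$; the bound $|A\cup Ag|\ge n$ gives $|\partial_C(A\cup Ag)|\ge \kappa_n(C)$; and the minimality clause in the definition of an $n$-atom says that since $H\subsetneq A$ with $|H|\ge n$, one must have $|\partial_C(H)|\ge \kappa_n(C)+1$. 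Substituted into the above, this already forces $|\partial_C(Ag)|\ge\kappa_n(C)+1$, but it does not yet bound $|H|$.

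The main obstacle is that, unlike left translation, right multiplication by $g$ does \emph{not} preserve the right boundary, so $|\partial_C(Ag)|$ is not itself controlled by atomicity of $A$, and a single application of the submodular inequality is insufficient. To get the quantitative bound I would iterate: introduce the chain $H_0=A\supseteq H_1=H\supseteq H_2=H\cap Hg\supseteq\cdots$, where $H_{j+1}=H_j\cap H_jg=\bigcap_{i=0}^{j+1}Ag^i$. Decomposing $A$ along left cosets of $\langle g\rangle$ into a disjoint union of $r$ maximal runs of consecutive powers $xg^i,xg^{i+1},\ldots,xg^{i+t_k-1}$ yields the explicit identity $|H_j|=\sum_k\max(t_k-j,0)$; in particular $r=|A|-|H|$, and the inequality to be proved rearranges to $(n-1)r\ge |A|-1$.

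To close, I would apply the atom surplus $|\partial_C(H_j)|\ge\kappa_n(C)+1$ at each level where $|H_j|\ge n$ and chain the submodular inequalities $|\partial_C(H_{j+1})|+|\partial_C(H_j\cup H_jg)|\le|\partial_C(H_j)|+|\partial_C(H_jg)|$ for $j=0,\ldots,n-2$. A run of length $t_k$ contributes $\max(t_k-(n-1),0)$ to the accumulated left-hand surplus, while the total telescoping difference on the right reflects all $r$ runs, yielding $(n-1)r\ge|A|-1$ after rearrangement. The most delicate step is the telescoping itself: the uncontrolled right-translate boundaries $|\partial_C(H_jg)|$ must cancel in pairs (or be re-absorbed using a companion inequality obtained by replacing $g$ with $g^{-1}$), and making this bookkeeping precise is where the coefficient $\frac{1}{n-1}$ in the final bound is pinned down.
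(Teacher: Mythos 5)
Your preparatory steps are fine: the second inequality is trivial, the decomposition of $A$ into maximal $g$-runs with the identity $|A\cap Ag|=|A|-r$ ($r$ the number of runs), the restatement of the claim as $(n-1)r\ge |A|-1$, and the observation that a proper subset $H\subsetneq A$ with $|H|\ge n$ satisfies $|\partial_C(H)|\ge\kappa_n(C)+1$ are all correct. But the proof never closes, and you say so yourself: all of the quantitative content is delegated to a telescoping of submodular inequalities whose large sides contain the terms $|\partial_C(H_jg)|$, and nothing in the isoperimetric machinery controls these. Right translation is not compatible with the right boundary ($\partial_C(Xg)\ne \partial_C(X)g$ because $gC\ne Cg$), so $|\partial_C(H_jg)|$ is not bounded by atomicity, and the boundary operator is not monotone, so $|\partial_C(H_j\cup H_jg)|$ cannot absorb it. Summing your inequalities over $j$ only gives $|\partial_C(H_J)|+\sum_j|\partial_C(H_j\cup H_jg)|\le |\partial_C(A)|+\sum_j|\partial_C(H_jg)|$, in which every uncontrolled term sits on the larger side with nothing to cancel it; the companion inequality with $g^{-1}$ produces terms $|\partial_C(H_jg^{-1})|$ that are uncontrolled for the same reason. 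The assertion that a run of length $t_k$ ``contributes $\max(t_k-(n-1),0)$ to the accumulated surplus'' is exactly the statement to be proved and is nowhere derived. So there is a genuine gap, and I do not see how to repair it within this scheme.

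Note also that the paper does not prove this lemma: it is quoted from \cite[Lemma 5]{palfy}. A short proof runs through the left-translate intersection bound for atoms that this paper already uses as \cite[Lemma 1]{Hamidoune}: for an $n$-atom $A$ and $h\ne 1$ one has $hA\ne A$ (torsion-freeness) and $|A\cap hA|\le n-1$, the point being that left translates of atoms are again atoms, whereas right translates are not. In your run decomposition this gives everything. If a run $\{x,xg,\dots,xg^{t-1}\}\subseteq A$ had $t\ge n+1$, take $h=xgx^{-1}\ne 1$; then $h(xg^m)=xg^{m+1}$, so $|A\cap hA|\ge t-1\ge n$, a contradiction; hence every run has length at most $n$. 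If two distinct runs, starting at $x_i$ and $x_j$, both had length $n$, take $h=x_jx_i^{-1}\ne 1$; then $h(x_ig^m)=x_jg^m$ for $m=0,\dots,n-1$, so again $|A\cap hA|\ge n$. Hence at most one run has length $n$ and the rest have length at most $n-1$, so $|A|\le (n-1)r+1$, which is precisely $(n-1)r\ge|A|-1$, i.e.\ the desired bound. Your left-coset run decomposition is the right combinatorial frame; the missing idea is to exploit the left-translation symmetry of atoms rather than submodularity against right translates.
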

 Using Propositions \ref{conz} and \ref{conu}, we give a partial answer to Conjecture \ref{con1}.
\begin{thm}\label{alpha}
Let $G$ be a torsion-free group and  $C$ be a finite  subset of $G$ containing the identity element. If $ |C|\leq 5 $, then $ \alpha_2(C)=2 $. Moreover, if $|C|\leq 7 $, then $\alpha_2(C)\neq 3$.
\end{thm}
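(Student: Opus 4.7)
The plan is to argue by contradiction. Assume $\alpha_2(C)\ge 3$ for the first assertion or $\alpha_2(C)=3$ for the second, let $A$ be a $2$-atom with $|A|\ge 3$, and replace $G$ by $\langle C\rangle$ so that Lemma \ref{lem5H} gives $|A|\le |C|-1$. The pairs $(|A|,|C|)$ remaining to be ruled out are $(3,4),(3,5),(4,5)$ for the first assertion and, together with those, $(3,6),(3,7)$ for the second. For each such pair Lemma \ref{lem4H} yields
\[
|AC|-|A|=\kappa_2(C)\le \tfrac{|A|(|C|-2)}{2},
\]
i.e., $|AC|\le |A||C|/2$, while the Kemperman inequality $|AC|\ge |A|+|C|-1$ supplies the lower bound.

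For $(3,4)$ these bounds already pin $|AC|=6=|A||C|/2$, and Proposition \ref{conz} produces a zero-divisor in $\mathbb{F}_2[G]$ with supports of sizes $3$ and $4$. For $(3,5)$ they pin $|AC|=7=(|A||C|-1)/2$, and Proposition \ref{conu} yields a non-trivial unit with supports $3$ and $5$. For $(4,5)$ the Kemperman bound is slack, so I would invoke Lemma \ref{lem5P} with $n=2$ to obtain $|A\cap Ag|\le 1$ for every $g\ne 1$, i.e., $A$ is a Sidon set. Writing $C=\{1,c_1,\dots,c_4\}$ and unioning $A,Ac_1,\dots,Ac_4$ one at a time, the Sidon condition forces each new translate to meet the previous union in at most $i$ points, contributing at least $4-i$ new ones, whence $|AC|\ge 4+3+2+1+0=10=|A||C|/2$; this matches the upper bound and once more reduces to Proposition \ref{conz}. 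The three counterexamples so produced contradict the known partial cases of Kaplansky's conjectures in characteristic $2$ at these small support sizes, completing the first assertion.

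For $(3,6)$ and $(3,7)$ the combined bounds leave two possible values of $|AC|$. The ``propositional'' value, namely $9=|A||C|/2$ for $(3,6)$ and $10=(|A||C|-1)/2$ for $(3,7)$, is dispatched exactly as above. At the off-propositional value ($|AC|=8$ or $9$, respectively), the constraints $r_{AC}(x)\in\{2,3\}$ together with $\sum_x r_{AC}(x)=|A||C|$ force a unique multiplicity profile: two (resp.\ three) elements $x_i$ with $r_{AC}(x_i)=3$ and the rest with multiplicity $2$. At each such $x_i$ one has $A^{-1}x_i\subseteq C$; combining this with the Sidon structure of $A$ leaves essentially a unique configuration for $C$, from which I would derive a contradiction by exhibiting a $2$-critical subset of $G$ of size $2$, which would violate the minimality $|A|=3$ of a $2$-atom.

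The main obstacle is this off-propositional sub-analysis for $(3,6)$ and $(3,7)$: Propositions \ref{conz}--\ref{conu} just miss, so the contradiction must be extracted combinatorially from the Sidon-induced rigidity of $C$ together with the Hamidoune condition $r_{AC}(x)\ge 2$. The remainder of the argument is a clean application of the Hamidoune--Kemperman bounds.
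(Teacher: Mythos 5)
Your argument is incomplete exactly where you say it is, and the incompleteness is a genuine gap rather than a routine detail. For $(|A|,|C|)=(3,6)$ and $(3,7)$ you only have Kemperman's bound $|AC|\ge |A|+|C|-1$, which leaves the subcases $|AC|=8$ (two elements of multiplicity $3$) and $|AC|=9$ (three elements of multiplicity $3$), and for these you offer no argument, only the intention to show that the Sidon property of $A$ ``leaves essentially a unique configuration for $C$'' and then to exhibit a $2$-critical set of size $2$. Nothing in the Sidon condition $|A\cap Ag|\le 1$ for a $3$-element atom pins down a $6$- or $7$-element set $C$, and no construction of a small critical set is indicated, so these cases are simply open in your proposal. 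The missing idea, which is how the paper closes them, is to upgrade the lower bound to $|AC|\ge |A|+|C|$, i.e.\ $\kappa_2(C)\ge |C|$: if $\kappa_2(C)=|C|-1$, the Brailovsky--Freiman characterization of equality in Kemperman's inequality \cite{BF} forces the atom $A$ to be a progression $\{a,ar,\ldots,ar^l\}$, whence $|A\cap Ar|\ge 2$, contradicting Lemma \ref{lem5P} with $n=2$ (your own Sidon observation). With this strengthened bound the values $|AC|=8$ and $|AC|=9$ are impossible for $(3,6)$ and $(3,7)$, and the only surviving values are the ones you already dispatch through Propositions \ref{conz} and \ref{conu} together with the small-support results \cite[Theorem 1.3]{PS} and \cite[Proposition 4.12]{a55}.

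Apart from this, your skeleton is the paper's: Lemma \ref{lem5H} to get $|A|\le |C|-1$, Lemma \ref{lem4H} to get $r_{AC}(x)\ge 2$ and hence $|AC|\le |A||C|/2$, and the Kaplansky-type results to kill the pinned cases. Two remarks. First, your handling of $(4,5)$ by the pairwise-intersection (Sidon) union bound, giving $|AC|\ge 4+3+2+1+0=10$ directly, is correct and is actually a tidy alternative to the paper, which only gets $|AC|\in\{9,10\}$ and must eliminate $|AC|=9$ separately by passing to a $2$-atom of $A^{-1}$ and invoking the already-proved case $|C|\le 4$. Second, you should replace the vague appeal to ``known partial cases'' by precise citations: the zero-divisor cases $(3,4)$, $(4,5)$, $(3,6)$ need \cite[Theorem 1.3]{PS} (or \cite[Theorem 1.4]{AT}), and the unit cases $(3,5)$, $(3,7)$ need \cite[Proposition 4.12]{a55} (or \cite[Theorem 1.7]{AT}); also add a sentence for $|C|\le 2$, since Lemma \ref{lem5H} requires $|C|\ge 3$.
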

\begin{proof}
Observe that if $ |C|=1 $, then each subset with two elements of $ G $ is a 2-atom of $C$ and also if $ |C|=2 $, then according to \ref{kemperman}, $C$ is a 2-atom of itself. Now, suppose that $|C|\geq 3 $ and $\alpha_2(C)>2$.  Let $ A $ be a 2-atom of $ C $ containing the identity element. It follows from Lemma \ref{lem5H}  that $|A|\leq |C|-1 $. Hence, $ |C|\geq 4 $. We first show that $\kappa_2(C)\geq |C|$. In view of \ref{kemperman}, $\kappa_2(C)\geq |C|-1$. If $\kappa_2(C)= |C|-1$, then by the main result of \cite{BF}, there exist $ r\in G\setminus \{1\} $  and $ a\in G $ such that $A=\{a,ar,\ldots,ar^l\}$ for some non-negative integer $l$  and therefore $ |A\cap Ar|\geq 2 $, contradicting  Lemma \ref{lem5P} . So, $\kappa_2(C)\geq |C|$. Now since  $|A|>2$, Lemma \ref{lem4H} implies that $r_{AC}(x)\geq 2 $ for all $ x\in AC $. It follows that $ |AC|\leq \frac{|A||C|}{2} $. Thus,
\begin{equation}\label{t}
|C|+|A|\leq |AC|\leq \frac{|A||C|}{2}.
\end{equation}
If $ |C|=4 $, then $ |A|=3 $ which contradicts \ref{t}.
So far we have proved that if $|C|\leq 4$ then $\alpha_2(C)=2$; we will use the latter in the rest of the proof.\\
 Now, suppose that $ |C|=5 $. Then $ |A|\in \{3,4\} $. Now  \ref{t} implies that $ |A|\neq 3 $ and so  $ |A|=4 $. So,  \ref{t} implies $ |AC|\in\{9,10\} $. Suppose first that $ |AC|=10 $. Thus, according to the proof of Proposition \ref{conz}, there are $ \alpha,\beta \in \mathbb{F}_2[G] $ such that $ supp(\alpha)=A $, $ supp(\beta)=C $ and $ \alpha\beta=0 $, contradicting \cite[Theorem 1.3]{PS}. Now, suppose that $ |AC|=9 $. Therefore, $|C^{-1}A^{-1}|=9$ leads to $\kappa_2(A^{-1})\leq |A^{-1}|$. Since $|A^{-1}|=4$, by the above,  $ \alpha_2(A^{-1})=2 $. Let $A'=\{1,r\}$ be a 2-atom of $A^{-1}$. From $ |A'A^{-1}|\leq 6 $ and \ref{kemperman} it follows that $ |A\cap Ar|\geq 2 $ contradicting  Lemma \ref{lem5P} . This completes the proof of the first part of the theorem. If $ |C|=6 $ and  $ |A|=3 $, then by \ref{t}, $|AC|=9$. Thus, according to the proof of Proposition \ref{conz}, there are $ \alpha,\beta \in \mathbb{F}_2[G] $ such that $ supp(\alpha)=A $, $ supp(\beta)=C $ and $ \alpha\beta=0 $, contradicting \cite[Theorem 1.3]{PS}. Now, suppose that $ |C|=7 $ and $ |A|=3 $. Then, \ref{t} implies $ |AC|=10$. Hence, according to the proof of Proposition \ref{conu}, there are $ \alpha,\beta \in \mathbb{F}_2[G] $ such that $|supp(\alpha)|=|A|$, $|supp(\beta)|=|C| $ and $ \alpha\beta=1 $, contradicting \cite[Proposition 4.12]{a55}. This completes the proof.
\end{proof}
\vspace{1cc}
\subsection {\bf   Product set graph}
We follow the definitions and notations on graphs as in \cite{AT}. Note that by a graph we mean a triple $(\mathcal{V},\mathcal{E},\psi)$, where $\psi$ is a function which appears  if the edge set $\mathcal{E}$ is non-empty (for details see \cite[p. 2, paragraph 3]{AT}).  In \cite[Definitions 2.1 and 2.7]{AT}, two multigraphs $Z(\alpha ,\beta)$ and $U(a, b)$ are associated
with a pair $(\alpha,\beta)$ of zero divisors {\rm(}i.e. $\alpha\beta=0 ${\rm)} and a pair $(a,b)$ of unit elements {\rm(}i.e. $ab=1${\rm)} in a group algebra, respectively. The definitions of the latter graphs  are independent of any field and the conditions on $\alpha$ and $\beta$ or $a$ and $b$; in fact, the definitions only depend on the product of two pair of subsets  $(supp(\alpha),supp(\beta))$ and $(supp(a),supp(b))$. Here, we extend these definitions to the product of any pair of non-empty finite subsets of a group as follows:
\begin{defn}\label{defgraph}
Let  $G$ be a group and   $B$, $C$ be two finite non-empty  subsets of $ G $. We assign a graph $ P(B,C) $ to $(B,C)$ called the product set graph of $(B,C)$ as follows:\\ the vertex set is $B$, the edge set is 
\[
\big{\{}\{(b,b',c,c'),(b',b,c',c)\}\,|\;c,c'\in C,\;b,b'\in B,\; b\neq b',\;bc=b'c'\big{\}},
\] 
and if $ \mathcal{E}_{P(B,C)}\neq \varnothing $, the function $ \psi_{P(B,C)}:\mathcal{E}_{P(B,C)}\rightarrow {\mathcal{V}}^{2}_{_{P(B,C)}} $ is defined by
\[
\psi_{P(B,C)}(\{(b,b',c,c'),(b',b,c',c)\})=\{b,b'\},
\]
for all $ \{(b,b',c,c'),(b',b,c',c)\}\in  \mathcal{E}_{P(B,C)}$.
\end{defn}
\begin{rem}
Notice that in general the graph $P(B,C)$  may not be a simple graph. In fact $P(B,C)$ is an undirected graph with no loops but it may happen that it  has a multi-edge. For example  two multigraphs $Z(\alpha ,\beta)$ and $U(a,b)$ associated
with a pair $(\alpha,\beta)$ of zero divisors  or a pair $(a,b)$ of unit elements  in a group algebra \cite{AT,AJ} are isomorphic to the graphs $ P(supp(\beta)^{-1},supp(\alpha)^{-1}) $ and $ P(supp(b)^{-1},supp(a)^{-1}) $, respectively.
\end{rem} 
\begin{lem}\label{iso}
Let $B$, $C$ be two finite non-empty  subsets of a group $G$. Then $P(B,C)\cong P(xB,Cy)$ for all $x,y\in G$.
\end{lem}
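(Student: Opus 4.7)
The plan is to exhibit an explicit isomorphism $(\varphi,\tilde\varphi):P(B,C)\to P(xB,Cy)$ built from left multiplication by $x$ on vertices and the obvious compatible map on edges. On vertices I would define $\varphi:B\to xB$ by $\varphi(b)=xb$; this is a bijection because left multiplication by $x$ in the group $G$ is injective and $xB$ is, by definition, its image on $B$.

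On edges I would set
\[
\tilde\varphi\bigl(\{(b,b',c,c'),(b',b,c',c)\}\bigr):=\{(xb,xb',cy,c'y),(xb',xb,c'y,cy)\}.
\]
I would then verify that this lands in $\mathcal{E}_{P(xB,Cy)}$: the vertex entries $xb,xb'$ lie in $xB$ and stay distinct since $b\neq b'$, the second pair of entries $cy,c'y$ lies in $Cy$, and the defining relation $(xb)(cy)=(xb')(c'y)$ follows from $bc=b'c'$ by multiplying on the left by $x$ and on the right by $y$. Injectivity of $\tilde\varphi$ is immediate from the invertibility of $x$ and $y$, and surjectivity is the symmetric statement: any edge of $P(xB,Cy)$ can be pulled back to $P(B,C)$ by left-multiplying its vertex entries by $x^{-1}$ and right-multiplying its $C$-entries by $y^{-1}$.

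The last thing to check is incidence: for any edge $e=\{(b,b',c,c'),(b',b,c',c)\}$ one has $\psi_{P(xB,Cy)}(\tilde\varphi(e))=\{xb,xb'\}$, which is $\varphi$ applied pointwise to $\psi_{P(B,C)}(e)=\{b,b'\}$. Thus $(\varphi,\tilde\varphi)$ is a graph isomorphism. There is no substantial obstacle; the only point requiring care is purely notational, namely that edges are unordered pairs of $4$-tuples rather than unordered pairs of vertices, so one must confirm that $\tilde\varphi$ is independent of the chosen representative of each edge — but this is immediate because $\tilde\varphi$ treats the two tuples in the representing pair symmetrically.
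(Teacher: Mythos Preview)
Your proposal is correct and is precisely the natural explicit construction the paper has in mind; the paper itself does not write out the argument but simply refers the reader to \cite[Lemma 2.4]{AT}, whose proof is this same left/right translation map on vertices and edges.
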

\begin{proof}
The proof is similar to \cite[Lemma 2.4]{AT}.
\end{proof}
\begin{thm}\label{cayley}
Let  $G$ be a torsion-free group and   $B$, $C$ be two finite non-empty  subsets of $G$ such that $1\in C$ and $|C|=3$. If $ \left\langle C\right\rangle $ is not cyclic, then $P(B,C)$  is  the induced subgraph of the Cayley graph of $G$  with respect to  $S:=\{hh'^{-1}\;|\;h\neq h', h,h'\in C\}$ on the set $B$.
\end{thm}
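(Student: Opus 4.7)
The plan is to unpack both sides and compare them edge by edge. Write $C = \{1, x, y\}$; then the hypothesis set becomes
\[
S = \{x, x^{-1}, y, y^{-1}, xy^{-1}, yx^{-1}\},
\]
and one has $S = S^{-1}$ and $1 \notin S$ (since $cc'^{-1} = 1$ forces $c = c'$), so the Cayley graph of $G$ with respect to $S$ is an undirected simple graph with no loops. For $b \ne b' \in B$, any relation $bc = b'c'$ with $c, c' \in C$ automatically forces $c \ne c'$ and is equivalent to $b^{-1}b' = cc'^{-1} \in S$; meanwhile $\{b, b'\}$ is an edge of the induced Cayley subgraph on $B$ precisely when $b^{-1}b' \in S$. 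So the two adjacency relations coincide, and to finish the proof it suffices to check that $P(B,C)$ has no multi-edges, i.e., that the six products $cc'^{-1}$ with $c \ne c' \in C$ are pairwise distinct.

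The latter reduces to a short case analysis of the $\binom{6}{2} = 15$ possible equalities among the elements of $S$. Equalities of the form $x = y$ or $x^{-1} = y^{-1}$ are excluded by $|C| = 3$; equalities reducing to $x = 1$, $y = 1$, $x^2 = 1$, or $y^2 = 1$ (such as $x = x^{-1}$, $x = xy^{-1}$, or $y = yx^{-1}$) are excluded by torsion-freeness together with $x, y \ne 1$; the remaining five collisions $x = y^{-1}$, $y = x^2$, $x = y^2$, $x^{-1} = xy^{-1}$, $y^{-1} = yx^{-1}$ each place $y \in \langle x\rangle$ or $x \in \langle y\rangle$, and so force $\langle C\rangle$ to be cyclic, contradicting the hypothesis.

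The main obstacle, and the only case where the non-cyclicity hypothesis does not apply directly, is the equality $xy^{-1} = yx^{-1}$. Here I would right-multiply by $x$ to obtain $xy^{-1}x = y$, whence $(xy^{-1})^2 = xy^{-1} \cdot xy^{-1} = y \cdot y^{-1} = 1$. Since $G$ is torsion-free, this forces $xy^{-1} = 1$, i.e.\ $x = y$, contradicting $x \ne y$. Combining the cases, the six products are pairwise distinct, $P(B,C)$ is a simple graph, and the edge bijection above identifies it with the induced subgraph of the Cayley graph of $G$ with respect to $S$ on the set $B$.
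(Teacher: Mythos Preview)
Your proof is correct and follows the same line as the paper's: both reduce to showing that the six products $cc'^{-1}$ (for $c\neq c'$ in $C$) are pairwise distinct, which the paper obtains by citing \cite[Lemma~3.5]{AT} while you carry out the case analysis by hand, including the one nontrivial case $xy^{-1}=yx^{-1}$ handled via $(xy^{-1})^2=1$. One small bookkeeping slip: after discarding the $|C|=3$ and torsion cases there are six remaining pairs rather than five (for instance $\{x,y^{-1}\}$ and $\{x^{-1},y\}$ are distinct pairs even though both yield $xy=1$), but your cyclicity argument covers them all regardless.
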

\begin{proof}
 Suppose that there are two edges between distinct vertices $g$ and $g'$ of $P(B,C)$. Hence, there exist  $ h_1,h_2,h'_1,h'_2\in C $ such that $\{(g,g',h_1,h'_1),(g',g,h'_1,h_1)\}$ and $\{(g,g',h_2,h'_2),(g',g,h'_2,h_2)\}$ are distinct elements of $\mathcal{E}_{P(B,C)}$.  Clearly, $ h_1\neq h'_1 $, $ h_2\neq h'_2 $, $ h_1\neq h_2 $, $ h'_1\neq h'_2 $ and  $ h_{1}{h'}_{1}^{-1}=g^{-1}g'=h_{2}{h'}_{2}^{-1} $  which implies  ${h}_{2}^{-1}h_1= {h'}_{2}^{-1}h'_1$. Then,  $|\{h^{-1}h'\;|\;h\neq h', h,h'\in C\}|<6$. Now, since $1 \in C$, it follows from \cite[Lemma 3.5]{AT}  that $ \left\langle C\right\rangle  $ is an infinite cyclic group that is a contradiction.
\end{proof}
In the following, we need some definitions that are very similar to the definitions in \cite[p. 11]{AT}. For the reader's convenience, all  necessary definitions are given  below.
\begin{defn}
Let $G$ be a torsion-free group and $B$ and  $C$ be two non-empty finite  subsets of $G$ such that $1\in C$, $|C|=3$ and  $ \left\langle C\right\rangle $ is not cyclic. Suppose that $\mathcal{C} $ is a cycle of length $ n $ in $ P(B,C) $ with the vertex set $ \{g_1,g_2,\ldots,g_n\}\subseteq B $ such that $ g_i\sim g_{i+1} $ for all $ i\in \{1,\ldots,n-1\} $ and $ g_1\sim g_n $. By an arrangement $ l $ of the vertex set  $\mathcal{C} $, we mean a sequence of all vertices as $x_1,\ldots,x_k$ such that $ x_i\sim x_{i+1} $ for all $ i\in \{1,\ldots,n-1\} $ and $ x_1\sim x_n $.
There exist unique  distinct elements $ h_i,h'_i\in C$, $i\in \{1,\ldots,n\}$, satisfying the following relations:
\begin{equation}\label{cycle}
R: \left\{
\begin{array}{ll}
 g_{1} h_{1}=g_{2}h'_{1} \\  
g_{2}h_{2}=g_{3}h'_{2} \\
\vdots\\
g_{n}h_{n}=g_{1}h'_{n}.
\end{array} \right. 
\end{equation} 
 We assign the $2n$-tuple $ T_{\mathcal{C} }^l=[h_1,h'_1,h_2,h'_2,\ldots ,h_n,h'_n] $  to   $\mathcal{C} $ corresponding to the above arrangement $ l $ of the vertex set of  $\mathcal{C}$. We denote by $ R( T_{\mathcal{C}}^l) $ the above set $R$ of relations.
It can be derived from the relations {\rm\ref{cycle}} that $ r( T_{\mathcal{C}}^l):=(h_{1}{h'}_{1}^{-1})({h}_{2}{h'}_{2}^{-1})\ldots (h_{n}{h'}_{n}^{-1}) $ is equal to $1$. It
follows from Lemma {\rm\ref{cayley}}  that if $ T_{\mathcal{C}}^{l'} $ is  the $ 2n $-tuple of $ \mathcal{C} $ corresponding to another arrangement $ l' $ of the vertex set of $\mathcal{C}$, then $ T_{\mathcal{C}}^{l'} $ is one of the following $ 2n $-tuples:
 \begin{center}
  \begin{tabular}{c}
  $ [h_1,h'_1,h_2,h'_2,\ldots ,h_n,h'_n] $,\\
 $ [h_2,h'_2,h_3,h'_3,\ldots ,h_n,h'_n,h_1,h'_1] $,\\
 $ \vdots $\\
 $ [h_n,h'_n,h_1,h'_1,\ldots ,h_{n-2},h'_{n-2},h_{n-1},h'_{n-1}] $,\\
$ [h'_n,h_n,h'_{n-1},h_{n-1},\ldots ,h'_2,h_2,h'_1,h_1] $,\\
$ \vdots $\\
$ [h'_1,h_1,h'_{n},h_{n},\ldots ,h'_3,h_3,h'_2,h_2] $.
 \end{tabular}
 \end{center}
 The set of all such $ 2n $-tuples will be denoted by $ \mathcal{T}(\mathcal{C})$.  
 \end{defn}
\begin{defn}
Let $G$ be a torsion-free group and $B$ and  $C$ be two finite non-empty subsets of $G$ such that $1\in C$, $|C|=3$ and  $ \left\langle C\right\rangle $ is not cyclic. Let $\mathcal{C}$ be a cycle of length $ n $ in $ P(B,C) $. Since $ r(T_1)=1 $ if and only if $ r(T_2)=1 $, for all $ T_1,T_2\in  \mathcal{T}(\mathcal{C}) $, a member of $ \{r(T)\;|\;T \in  \mathcal{T}(\mathcal{C}) \} $ is given as a representative and denoted by $ r(\mathcal{C}) $. Also, $ r(\mathcal{C})=1 $  is called the relation of $\mathcal{C}$.
\end{defn}
 \begin{defn}\label{eqquidef}
Let $G$ be a torsion-free group and $B$ and  $C$ be two finite non-empty subsets of $G$ such that $1\in C$, $|C|=3$ and  $ \left\langle C\right\rangle $ is not cyclic. Let $\mathcal{C}$ and $\mathcal{C'}$ be two cycles of length $ n $ in $  P(B,C) $. We say that these two
cycles are non-equivalent, if $ \mathcal{T}(\mathcal{C})\cap \mathcal{T}(\mathcal{C'})=\varnothing $.
\end{defn}
 \begin{rem}\label{eqquirem}
 Suppose  that $\mathcal{C}$ and $\mathcal{C'}$ are two cycles of length $ n $ in $  P(B,C)  $. Then  $\mathcal{C}$ and $\mathcal{C'}$ are equivalent if $ \mathcal{T}(\mathcal{C})=\mathcal{T}(\mathcal{C'})$.
 \end{rem}
 \begin{rem}\label{type}
Let $G$ be a torsion-free group and $B$ and  $C$ be two finite non-empty   subsets of $G$ such that $1\in C$, $|C|=3$ and  $ \left\langle C\right\rangle $ is not cyclic. Suppose that $\mathcal{C}$ is a cycle of length $3$ {\rm(}a triangle{\rm)} in $P(B,C) $ with the vertex set  $\mathcal{V}_{\mathcal{C}}=\{g_1,g_2,g_3\}\subseteq B $.  Suppose further that $ T\in \mathcal{T}(\mathcal{C}) $ and $ T=[h_1,h'_1,h_2,h'_2,h_3,h'_3] $, where $ h_i,h'_i\in C $ for all $ i\in \{1,2,3\} $. Then $ T $ satisfies exactly one of the following conditions:
\begin{itemize}
\item[$(i)$] $ h_1\neq h'_1=h_2\neq h'_2=h_3\neq h'_3=h_1. $
\item[$(ii)$]$ h_1\neq h'_1\neq h_2\neq h'_2\neq h_3\neq h'_3\neq h_1. $
\end{itemize}
More precisely, we shall speak of a triangle $\mathcal{C}$ in $ P(B,C) $ of type $ (j) $ if $ T\in \mathcal{T}(\mathcal{C}) $ satisfies the condition $ (j) $ in the above list {\rm(}$ j $ being $ i $ or $ ii ${\rm)}.
 \end{rem}
 \begin{lem}\label{triii}
Let $G$ be a torsion-free group and $B$ and  $C=\{1,x,y\}$ be two finite non-empty  subsets of $G$ such that  $ \left\langle C\right\rangle $ is not abelian. If $ P(B,C) $ contains a triangle of type $ (ii) $, then $ x,y $ satisfy exactly one of the relations as follows: $(1)\; yxy=x $; $ (2)\; xyx=y$; $ (3)\; x^2=y^2 $. Moreover, every two triangles  of $ P(B,C) $ are equivalent.
 \end{lem}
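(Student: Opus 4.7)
\noindent\emph{Proof plan.}
Since $|C|=3$, a type (ii) tuple $T=(h_1,h_1',h_2,h_2',h_3,h_3')\in\{1,x,y\}^{6}$ is, by definition, a cyclic sequence with no two neighbours equal, so the set of all such tuples is finite. It carries the $D_3$-action of cyclic rotation of the three pairs $(h_i,h_i')$ together with the reflection displayed before Definition~\ref{eqquidef}; up to this action I normalize to $h_1=1$, and after possibly interchanging $x\leftrightarrow y$, to $h_1'=x$. A direct case split on the remaining four entries then yields exactly eleven tuples to analyze.

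For each of these eleven tuples the relation $r(T)=h_1 h_1'^{-1}\cdot h_2 h_2'^{-1}\cdot h_3 h_3'^{-1}=1$ becomes a word equation in $x,y$. One tuple gives $x^{3}=1$, impossible because $G$ is torsion-free; five tuples give $y=x^{k}$ or $x=y^{k}$ for some integer $k$, which makes $\langle C\rangle=\langle x,y\rangle$ cyclic and hence abelian, contrary to hypothesis; and one tuple gives $xy=yx$ outright. A subtler case is $(1,x,y,x,y,x)$, whose relation $(x^{-1}y)^{2}=x$ is not of the above forms, but setting $z=x^{-1}y$ yields $x=z^{2}$ and $y=z^{3}$, so $\langle x,y\rangle=\langle z\rangle$ is still cyclic and this case too is excluded. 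The three surviving tuples are $(1,x,1,y,x,y)$, $(1,x,y,1,y,x)$, $(1,x,y,x,1,y)$, and elementary rearrangements of their relations read exactly $yxy=x$, $x^{2}=y^{2}$, $xyx=y$; this establishes that $x,y$ satisfy at least one of (1)--(3).

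The ``exactly one'' clause reduces to the observation that any two of the three relations force torsion in $G$. For instance, (1) gives $yx=xy^{-1}$, whence $(yx)^{2}=(yxy)x=x^{2}$; (2) gives $xy=yx^{-1}$, whence $(xy)^{2}=(xyx)y=y^{2}$; and together $(yx)(xy)=(xy^{-1})(yx^{-1})=1$, so $(xy)^{2}=(yx)^{-2}$ and therefore $y^{2}=x^{-2}$. A short calculation then shows $(yxy)^{2}=yx\cdot y^{2}\cdot xy=yx\cdot x^{-2}\cdot xy=y^{2}=x^{-2}$, but squaring (1) gives $(yxy)^{2}=x^{2}$; hence $x^{4}=1$, contradicting torsion-freeness. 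The pairs $\{(1),(3)\}$ and $\{(2),(3)\}$ are handled by analogous short computations that also yield $x^{4}=1$.

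Finally, for the \emph{moreover} clause, the same case analysis shows that once $x,y$ satisfy, say, relation (1), every type (ii) tuple outside the single $\mathcal{T}$-orbit of $(1,x,1,y,x,y)$ is eliminated: its relation would force either abelianness, cyclicity, or one of the other two relations now forbidden by the exclusivity just proved. Hence any two type (ii) triangles of $P(B,C)$ have tuples in the same $\mathcal{T}$-orbit and so are equivalent in the sense of Definition~\ref{eqquidef}. The main obstacle is the careful bookkeeping of the eleven-case split, and in particular spotting the ``hidden cyclic'' case $(1,x,y,x,y,x)$ where a superficially new relation collapses to cyclicity only after the change of generator $z=x^{-1}y$.
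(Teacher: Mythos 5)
Your route is essentially the paper's: list the possible $6$-tuples of a type (ii) triangle up to the rotation/reflection action (you additionally quotient by the relabelling $x\leftrightarrow y$), discard those whose relation forces torsion or forces $\langle x,y\rangle$ to be cyclic or abelian, observe that the three survivors are exactly $yxy=x$, $x^2=y^2$, $xyx=y$, and then prove pairwise incompatibility inside a torsion-free group; your exclusivity computations (each pair of relations yields $x^4=1$ or its analogue) are correct and serve the same purpose as the paper's. But there is one concrete flaw in the enumeration: the normalization ``up to this action I may take $h_1=1$'' is false. Rotations, the reflection, and the swap $x\leftrightarrow y$ all preserve whether the letter $1$ occurs in the tuple, so the alternating tuple $(x,y,x,y,x,y)$ --- a perfectly legitimate type (ii) tuple, with relation $(xy^{-1})^3=1$, recorded in the paper's list of $13$ classes as $(x^{-1}y)^3$ --- can never be brought to your normal form and is absent from your eleven cases. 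The case is harmless (since $x\neq y$, torsion-freeness kills it immediately), but as written your case split provably misses it, so it must be added explicitly; note that it is exactly the one class whose entries avoid $1$, so once you add it your enumeration is complete.

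A second, smaller point concerns the ``moreover'' clause. You only show that any two type (ii) triangles are equivalent, using that after the exclusivity step only one $\mathcal{T}$-class of type (ii) tuples is compatible with the single relation that holds (and here you rightly invoke exclusivity to rule out the swap-partner class, since the swap is not part of the equivalence of Definition \ref{eqquidef}). The statement, as it is used later (e.g.\ in Lemma \ref{two cycle with one edge}), also needs the companion observation that any two type (i) triangles are equivalent; this is a one-liner, because a type (i) tuple is determined by a permutation $(h_1,h_2,h_3)$ of $C$ and all six permutations occur in its class $\mathcal{T}(\mathcal{C})$, so all type (i) triangles share the same tuple class. (A type (i) and a type (ii) triangle are never equivalent, so the clause has to be read type by type --- this is how the paper proves and applies it.) With the missing alternating case and the type (i) remark added, your argument matches the paper's proof.
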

 \begin{proof}
 Suppose that $\mathcal{C}$ is a triangle of type $ (ii) $ in $P(B,C) $. It can be seen that there are $ 13 $ non-equivalent cases for $\mathcal{C}$ and $ r(\mathcal{C}) $ corresponding to such cases are  the elements of the set 
 \begin{align*}
 A=&\big\{ x^{-3},(y^{-1}x)^2y^{-1},x^{-2}y^{-1},x^{-1}y^{-2},x^{-1}y^{-1}xy^{-1},x^{-1}yxy^{-1},
  x^{-1}yx^{-1}y^{-1},\\&(x^{-1}y)^2x^{-1},y^{-3},y^{-2}xy^{-1},x^{-1}y^{2}x^{-1},(x^{-1}y)^3,x^{-2}yx^{-1}\big\}.
 \end{align*}
 If $ r(\mathcal{C}) $ is  $i$-th element of the set $ A $, where   $ i\in\{2,3,4,6,8,10,13\} $,  then $ r(\mathcal{C})=1 $ implies that  $ \left\langle C\right\rangle $ is  abelian  that is a contradiction. If $ r(\mathcal{C}) $ is 1-th or 9-th element of the set $ A $, then $ G $ has a non-trivial torsion  element, a contradiction. Hence, $ r(\mathcal{C}) $ is  one of the elements 5-th, 7-th  or 11-th of the set $ A $. On the other hand, in a torsion-free group, at most one of these three relations can hold.  Indeed,  $ yxy=x $ and $  xyx=y$ imply $ x=xyx^2y $ and therefore $ x^2=y^{-2} $. So, $ y^{-2}=x^2=yxy^2xy=y^2 $, a contradiction. If  $ yxy=x $ and $  x^2=y^2$, then $ y^2=y^6 $, a contradiction. Hence, two triangles of type $ (ii) $ are equivalent. Observe that two triangles of type $ (i) $ are equivalent. This completes the proof.
 \end{proof}
 \begin{lem}\label{two cycle with one edge}
 Let $G$ be a torsion-free group and $B$ and  $C$ be two non-empty finite  subsets of $G$ such that $1\in C$, $|C|=3$ and  $ \left\langle C\right\rangle $ is not abelian. Suppose that $ P(B,C) $ contains a subgraph isomorphic to the graph in Figure {\rm\ref{two cycle with one edge in common}}. Then there exist $ h\in C $ and $ a'\in G $ such that  $ Ch^{-1}=\{1,a,b\} $,  $ a^2=b^2 $ and one of the sets $a'\{1,a^{-1},b^{-1},b^{-1}a\}$ and $a'\{1,b^{-1},b,ba^{-1}\}$ is a subset of $ B $.
  \end{lem}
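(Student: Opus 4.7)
Label the two triangles as $T_1=\{v_1,v_2,v_3\}$ and $T_2=\{v_1,v_2,v_4\}$ sharing the edge $e=\{v_1,v_2\}$, with edge datum $(h_0,h_0')\in C\times C$ so that $v_1h_0=v_2h_0'$. The plan is to first normalize using Lemma~\ref{iso}, then use the classification of triangles (Remark~\ref{type} and Lemma~\ref{triii}) to reduce to the situation $C=\{1,a,b\}$ with $a^2=b^2$, and finally identify the four vertices through the Cayley-graph description of Theorem~\ref{cayley}. Applying Lemma~\ref{iso} with $x=v_1^{-1}$ and $y=(h_0')^{-1}$ gives $P(B,C)\cong P(v_1^{-1}B,C(h_0')^{-1})$, so we may assume $v_1=1$ and $h_0'=1$. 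Then $v_2=h_0\in C\setminus\{1\}$; writing $C=\{1,x,y\}$, we take $v_2=x$.

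\textbf{Forcing the relation.} By Remark~\ref{type} each triangle is of type~(i) or type~(ii). If $T_i$ is of type~(i) with third vertex~$w$, then the type-(i) pattern of edge data, combined with the normalization $(h_0,h_0')=(x,1)$, forces the three entries $\{h_1,h_2,h_3\}$ along the triangle to exhaust $C$; this pins down $w=xy^{-1}$. So if both triangles were of type~(i), we would get $v_3=v_4=xy^{-1}$, a contradiction; hence at least one triangle is of type~(ii). Lemma~\ref{triii} then forces $(x,y)$ to satisfy exactly one of $yxy=x$, $xyx=y$, or $x^2=y^2$. In each case we choose $h\in C$ so that $Ch^{-1}=\{1,a,b\}$ satisfies $a^2=b^2$: take $h=1$, $(a,b)=(x,y)$ if $x^2=y^2$; $h=x$, $(a,b)=(x^{-1},yx^{-1})$ if $yxy=x$; and $h=y$, $(a,b)=(y^{-1},xy^{-1})$ if $xyx=y$. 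A short group-theoretic manipulation of each defining relation verifies $a^2=b^2$ in the last two cases. A further application of Lemma~\ref{iso} then lets us assume $C=\{1,a,b\}$ with $a^2=b^2$.

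\textbf{Identifying the four vertices.} By Theorem~\ref{cayley}, $P(B,C)$ is the induced subgraph on $B$ of the Cayley graph of $\langle C\rangle$ with connection set $S=\{a,a^{-1},b,b^{-1},ab^{-1},ba^{-1}\}$. The two-triangles-sharing-an-edge configuration translates into: $v_2,v_3,v_4\in S$, $v_2^{-1}v_3,v_2^{-1}v_4\in S$, and $v_3^{-1}v_4\notin S$ (the last because the subgraph is induced and the $v_3v_4$-edge is absent). The identity $b^{-1}a=ba^{-1}$, an immediate consequence of $a^2=b^2$ obtained by inverting, explains why the two prescribed target sets $\{1,a^{-1},b^{-1},b^{-1}a\}$ and $\{1,b^{-1},b,ba^{-1}\}$ have the indicated form. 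A direct computation inside $\langle a,b\mid a^2=b^2\rangle$ shows that $|S\cap v_2S|=2$ for every $v_2\in S$, so $\{v_3,v_4\}$ is determined by $v_2$. Going through the six possible values of $v_2$ (reducible by the symmetries $a\leftrightarrow b$ and $s\mapsto s^{-1}$ to only a couple of essentially different configurations) one finds that in each instance $\{1,v_2,v_3,v_4\}$ coincides, after left translation by a suitable $a'\in G$, with one of the two target sets, as required. The principal obstacle is precisely this final case-by-case verification, which relies on concrete calculations in the Klein-bottle group.
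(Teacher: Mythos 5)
Your proposal is correct in substance but takes a genuinely different route through its second half than the paper does. The first step --- both triangles cannot be of type (i), since with the shared edge normalized to $1\cdot x=x\cdot 1$ a type-(i) triangle has its third vertex forced to be $xy^{-1}$ --- coincides with the paper's opening move. From there the paper does not pass to a Klein-bottle normal form: it shows (by checking that no two tuples in one equivalence class can share the first edge-pair) that the second triangle must be of type (i), so the shared product point satisfies $g_1h_1=g_2h'_1=g_4t_3$ with $\{h_1,h'_1,t_3\}=C$, and then reads off $h$ and the four vertices by enumerating the tuples of the type-(ii) triangle for each of the three possible relations. You instead translate $C$ so that $a^2=b^2$ (your choices of $h$ and the verifications of $a^2=b^2$ are correct in all three cases, and the composite translation is indeed of the form $Cc^{-1}$ with $c\in C$, as the statement requires) and then identify the vertices via Theorem \ref{cayley} by computing $S\cap v_2S$; I checked that $|S\cap v_2S|=2$ for all six $v_2\in S$ and that $\{1,v_2\}\cup(S\cap v_2S)$ is in every case a left translate of one of the two target sets (up to interchanging the labels $a$ and $b$, which the statement permits), so your endgame does close. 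Two caveats: the parenthetical claim $v_3^{-1}v_4\notin S$ is unjustified --- the hypothesis gives a subgraph, not an induced subgraph --- though you never use it; and the computation tacitly assumes $a,b$ satisfy no relations beyond $a^2=b^2$, which needs a word of justification (Remark \ref{klein} together with Hopficity of the Klein bottle group, or, in the paper's own style, ruling out each unwanted coincidence in $S\cap v_2S$ directly from torsion-freeness, non-abelianness and Lemma \ref{triii}). What the paper's longer tuple-by-tuple analysis buys is independence from that faithfulness issue and, more importantly, the explicit dichotomy that one triangle is of type (i) and the other of type (ii), a byproduct that the proofs of Lemma \ref{complete} and Remark \ref{squ1} later cite; your argument, while more uniform for the present statement, does not supply that.
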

 \begin{proof}
Let  $C=\{1,x,y\}$. Suppose that  $\mathcal{C}$ and $\mathcal{C'}$ are   two triangles with the vertex sets $ \{g_1,g_2,g_3\} $ and $ \{g_1,g_2,g_4\} $  in $ P(B,C) $ as Figure \ref{two cycle with one edge in common}. Suppose further that $ T=[h_1,h'_1,h_2,h'_2,h_3,h'_3] $ is the 6-tuple to $\mathcal{C}$ corresponding to the arrangement $ g_1,\,g_2,\,g_3 $ and also $T'=[h_1,h'_1,t_2,t'_2,t_3,t'_3] $  is the 6-tuple to $\mathcal{C'}$ corresponding to the arrangement $ g_1,\,g_2,\,g_4 $, where the first two components of $ T $ and $ T' $ are related to the common edge between these triangles.  It is clear that  either  $ h_2\neq t_2 $ or $ h_2=t_2 $ and $ h'_2\neq t'_2 $ and also either $ h'_3\neq t'_3 $ or $ h'_3=t'_3 $ and $ h_3\neq t_3 $ since otherwise   $ g_3= g_4 $.  Hence, $ T\neq T' $ and therefore it is impossible that the triangles $\mathcal{C}$ and $\mathcal{C'}$ are both  of type $ (i) $.  So, without loss of generality, we may assume that $\mathcal{C}$ is a triangle of type $ (ii) $. Then Lemma \ref{triii} implies that    $ r(\mathcal{C})\in \{ x^{-1}y^{-1}xy^{-1},  x^{-1}yx^{-1}y^{-1},x^{-1}y^{2}x^{-1} \} $. If   $ r(\mathcal{C})=x^{-1}y^{-1}xy^{-1} $, then 
 \[
 \mathcal{T}(\mathcal{C})=\big\{[1,x,1,y,x,y],[1,y,x,y,1,x],[x,y,1,x,1,y],[y,x,y,1,x,1],[y,1,x,1,y,x],[x,1,y,x,y,1]\big\},
 \]
  and if $ r(\mathcal{C})=x^{-1}y^{2}x^{-1}$, then 
 \[
 \mathcal{T}(\mathcal{C})=\big\{[1,x,y,1,y,x],[y,1,y,x,1,x],[y,x,1,x,y,1],[x,y,1,y,x,1],[1,y,x,1,x,y],[x,1,x,y,1,y]\big\}.
 \] 
Now, if $\mathcal{C'}$ is a triangle of type $ (ii)  $, then by Lemma  \ref{triii}, $\mathcal{C}$ and $\mathcal{C'}$ are equivalent and therefore $ \mathcal{T}(\mathcal{C})=\mathcal{T}(\mathcal{C'}) $. But in this case, by every choice of $ r(\mathcal{C}) $ as above, there are no two   elements of $ \mathcal{T}(\mathcal{C}) $ satisfying the mentioned conditions for $ T $ and $ T' $. Thus, $\mathcal{C'}$ must be  a triangle of type $ (i)$. Hence, $ g_1h_1=g_2h'_1=g_4t_3$, where $ \{h_1,h'_1,t_3\}=C $. Note that by Lemma  \ref{iso}, we may assume that  $ g_1=1 $. Suppose first that $ r(\mathcal{C})= x^{-1}y^{-1}xy^{-1} $. Then if we let $ a=yx^{-1} $ and  $ b=x^{-1} $, then $ Cx^{-1}=\{1,a,b\} $, $ a^2=b^2 $ and if $ T $ is the 1-th and 6-th elements of $  \mathcal{T}(\mathcal{C}) $, then $ h=y=ab^{-1}=a^{-1}b $ and $\{g_1,g_2,g_3,g_4\}$ is equal to  $b\{1,a^{-1},b^{-1},b^{-1}a\} $ and $ \{1,a^{-1},b^{-1},b^{-1}a\} $, respectively; if $ T $ is the 2-th  and 5-th elements of $  \mathcal{T}(\mathcal{C}) $, then $ h=x=b^{-1} $ and $\{g_1,g_2,g_3,g_4\}$ is equal to  $\{1,b^{-1},b,b^{-1}a\}$ and $\{1,a^{-1},a,a^{-1}b\}$; if $ T $ is the 3-th and 4-th elements of $  \mathcal{T}(\mathcal{C}) $, then $ h=1 $ and $\{g_1,g_2,g_3,g_4\}$ is equal to $\{1,a^{-1},b^{-1},a^{-1}b\} $ and $a\{1,a^{-1},b^{-1},a^{-1}b\} $, respectively. Now, suppose that $ r(\mathcal{C})=x^{-1}y^{2}x^{-1} $. Then if $ T $ is the 1-th and 6-th elements of $  \mathcal{T}(\mathcal{C}) $, then $ h=y $ and $\{g_1,g_2,g_3,g_4\}$ is equal to  $\{1,x^{-1},y^{-1},x^{-1}y\} $ and $x\{1,x^{-1},y^{-1},x^{-1}y\} $, respectively; if $ T $ is the 2-th  and 5-th elements of $  \mathcal{T}(\mathcal{C}) $, then $ h=x $ and $\{g_1,g_2,g_3,g_4\}$ is equal to  $ y\{1,x^{-1},y^{-1},y^{-1}x\} $ and $\{1,x^{-1},y^{-1},y^{-1}x\} $; if $ T $ is the 3-th and 4-th elements of $  \mathcal{T}(\mathcal{C}) $, then $ h=1 $ and $\{g_1,g_2,g_3,g_4\}$ is equal to $\{1,y^{-1},y,y^{-1}x\} $ and $\{1,x^{-1},x,x^{-1}y\} $, respectively.
 Note that if $ r(\mathcal{C})=  x^{-1}yx^{-1}y^{-1} $, then by interchanging $ x $ and $ y $ in the  case $ r(\mathcal{C})= x^{-1}y^{-1}xy^{-1} $ and with the same discussion, the statement is true. This completes the proof.
 \end{proof}
  \begin{lem}\label{complete}
   Let $G$ be a torsion-free group and $B$ and  $C$ be two non-empty finite  subsets of $G$ such that $1\in C$, $|C|=3$ and  $ \left\langle C\right\rangle $ is not abelian. Then $ P(B,C) $ contains no subgraph isomorphic to the complete graph with $4$ vertices.
  \end{lem}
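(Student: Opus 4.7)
The plan is to argue by contradiction: suppose that $P(B,C)$ contains a $K_4$ on vertex set $V=\{g_1,g_2,g_3,g_4\}$. Since $K_4$ contains four triangles, any two of which share an edge, I would first isolate the two triangles that share the edge $\{g_1,g_2\}$. These form the bowtie of Figure \ref{two cycle with one edge in common}, so Lemma \ref{two cycle with one edge} applies and yields $h\in C$ together with elements $a,b\in G$ such that $Ch^{-1}=\{1,a,b\}$, $a^2=b^2$, and (after applying Lemma \ref{iso} to left-translate $B$ so that the shift $a'$ becomes $1$) $V$ equals either $\{1,a^{-1},b^{-1},b^{-1}a\}$ or $\{1,b^{-1},b,ba^{-1}\}$.

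Next I would invoke Theorem \ref{cayley}: the $K_4$ hypothesis forces every one of the six pairwise quotients $g_ig_j^{-1}$ to lie in $S=\{hh'^{-1}\mid h\neq h',\,h,h'\in C\}$. Writing $C=\{h,ah,bh\}$, a direct computation gives $S=\{a^{\pm 1},b^{\pm 1},(ab^{-1})^{\pm 1}\}$, independently of the choice of $h$ (since $CC^{-1}=(Ch^{-1})(Ch^{-1})^{-1}$). It therefore suffices, in each of the two cases for $V$, to exhibit a single pair whose quotient fails to lie in $S$.

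In the first case I would focus on the ``far'' pair $\{b^{-1},b^{-1}a\}$, whose quotient is $b^{-1}a^{-1}b$; in the second case, on $\{b^{-1},b\}$, whose quotient is $b^{-2}=a^{-2}$. For each of these two target elements, I would run through the six equations obtained by setting it equal to each member of $S$ in turn, and use $a^2=b^2$ together with torsion-freeness of $G$ and non-abelianness of $\langle C\rangle$ to rule each case out. Every subcase collapses to one of the following: commutativity of $a$ and $b$; a torsion relation such as $a^3=1$, $a^4=1$, or $b^3=1$; or a triviality such as $a=1$ or $b=1$. Any of these contradicts our hypotheses, completing the proof.

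The main obstacle is simply the bookkeeping of the twelve resulting subcases, though each individual subcase reduces to a one-line manipulation; the central algebraic lever throughout is the substitution $b^2=a^2$, which is precisely what converts the candidate equations into either commutativity or torsion.
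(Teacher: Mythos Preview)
Your overall strategy---use Lemma~\ref{two cycle with one edge} to pin down the four vertices and then exhibit a non-edge---is sound, but there is a genuine error in your adjacency criterion. In $P(B,C)$ two vertices $g,g'$ are adjacent precisely when $gc=g'c'$ for some $c\neq c'$ in $C$, i.e.\ when $g^{-1}g'=cc'^{-1}\in S$; the relevant quotient is $g_i^{-1}g_j$, not $g_ig_j^{-1}$. These differ in a non-abelian group, and the slip is fatal in your first case: for $V=\{1,a^{-1},b^{-1},b^{-1}a\}$ the pair $\{b^{-1},b^{-1}a\}$ has correct quotient $(b^{-1})^{-1}(b^{-1}a)=a\in S$, so it \emph{is} already an edge of the bowtie. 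The genuinely missing sixth edge is $\{a^{-1},b^{-1}a\}$, with quotient $a\cdot b^{-1}a=aba^{-1}$ (using $b^{-1}a=ba^{-1}$ from $a^2=b^2$), and it is this element that must be excluded from $S$ by your six-subcase check. Your second case survives only because $b^{2}$ and $b^{-2}$ have the same membership status in the symmetric set $S$; the underlying formula is still wrong.

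Once this is repaired your route does work, but the paper's own proof is considerably shorter and avoids all element chasing. The key point, extracted from the \emph{proof} of Lemma~\ref{two cycle with one edge}, is that whenever two triangles of $P(B,C)$ share an edge, one must be of type~$(i)$ and the other of type~$(ii)$. A $K_4$ contains four triangles, any two of which share an edge; by pigeonhole two of the four have the same type, and those two share an edge---a contradiction. Your approach trades this structural parity argument for a concrete computation, which is more hands-on but, as the slip above shows, also more fragile.
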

  \begin{proof}
  In view of the proof of Lemma \ref{two cycle with one edge}, if $ P(B,C) $ contains two triangles which have an edge in common, then  one of them should be of  type $ (i) $ and the other of type $ (ii) $. But, if $ P(B,C) $ contains a subgraph isomorphic to the complete graph with $4$ vertices, then clearly $ P(B,C) $ contains two triangles which have an edge in common and both of them are of type $ (i) $ or $ (ii) $ that is a contradiction. This completes the proof.
  \end{proof}
 \begin{figure}
\begin{tikzpicture}[scale=.9]
\draw [fill] (0,0) circle
[radius=0.1] node  [left]  {$ g_4 $};
\draw [fill] (2,0) circle
[radius=0.1] node  [right]  {$ g_3 $};
\draw [fill] (1,1) circle
[radius=0.1] node  [left]  {$ g_1 $};
\draw [fill] (1,-1) circle
[radius=0.1] node  [left]  {$ g_2 $};
\draw  (0,0) -- (.95,.95);
\draw (1.05,.95) -- (2,0) -- (1,-1)-- (0,0);
\draw  (1,.9) --  (1,-1) ;
\end{tikzpicture}
\caption{Two triangles with one edge in common.} \label{two cycle with one edge in common}
\end{figure}
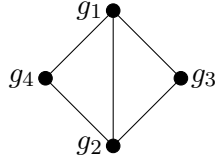
 \begin{rem}\label{squ1}
 Let $G$ be a torsion-free group and $B$ and  $C$ be two non-empty finite  subsets of $G$ such that $1\in C$, $|C|=3$ and  $ \left\langle C\right\rangle $ is not abelian. Suppose that  $\mathcal{C}$ is a cycle of length $4$ {\rm(}a square{\rm)} in $ P(B,C) $.  Suppose further that $ T\in \mathcal{T}(\mathcal{C}) $ and $ T=[h_1,h'_1,h_2,h'_2,h_3,h'_3,h_4,h'_4] $, where $ h_i,h'_i\in C $ for all $ i\in \{1,2,3,4\} $, then   by Lemma {\rm\ref{iso}} and Remark {\rm\ref{set}}, $ T $ satisfies exactly one of the following conditions:
 \begin{itemize}
 \item[(i)]$  h_{i_1}\neq h'_{i_1}= h_{i_2}\neq h'_{i_2}\neq h_{i_3}\neq h'_{i_3}\neq h_{i_4}\neq h'_{i_4}\neq h_{i_1}$, where \[ (i_1,i_2,i_3,i_4)\in\big\{(1,2,3,4),(2,3,4,1),(3,4,1,2),(4,1,2,3)\big\}. \]
 \item[(ii)] $  h_1\neq h'_1\neq h_2\neq h'_2\neq h_3\neq h'_3\neq h_4\neq h'_4\neq h_1$.
\end{itemize}  
 More precisely, we shall speak of a square $\mathcal{C}$ in $ P(B,C) $ of type $ (j) $ if  $ T\in \mathcal{T}(\mathcal{C}) $  satisfies  the condition $ (j) $ in the above list {\rm(}$ j $ being $ i $ or $ ii ${\rm)}. Note that if $\mathcal{C}$ is a square of type $ (i) $, then $\mathcal{C}$ is a square obtained from two triangles with one edge in common as Figure {\rm\ref{two cycle with one edge in common}}. 
 \end{rem}
 \begin{rem}\label{squ}
Let $G$ be a torsion-free group and $B$ and  $C=\{1,x,y\}$ be two non-empty finite  subsets of $G$ such that $\left\langle C\right\rangle $ is not abelian. Suppose that  $\mathcal{C}$ is a square of type $ (ii) $ in $ P(B,C) $.
By using GAP \cite{gap}, we know that  there are $36$ non-equivalent cases for $\mathcal{C}$. The relations of such non-equivalent cases are listed in the column
labelled by R of Table {\rm\ref{C4}}. It is easy to see that each of the cases $ 2,\;3,\;10,\;12,\;24,\;30,\;32,\;34,\;35$ and $36 $ leads to being $\left\langle C\right\rangle $ is  an abelian group;   each of the cases $ 1$ and $31 $ implies $ G $ has a non-trivial torsion element. Thus, $ r(\mathcal{C})=1 $ satisfies one of the relations marked by {\rm ``*"} s in
the column labelled by $E$ of Table {\rm\ref{C4}}. We note that each relation which leads to being $\left\langle C\right\rangle $ an abelian group or $G$ having a non-trivial torsion element is marked
by an A or a T in the column labelled by E, respectively.
 \end{rem}
 \begin{rem}\label{klein}
The Klein bottle group has the  presentation   $ \left\langle x,y \mid xyx=y \right\rangle  $, also  $ \left\langle x,y \mid x^{2}=y^{2} \right\rangle$ is another presentation of the Klein bottle group. We note that by the first part of the proof of \cite[Theorem 3.1]{PS},
every torsion-free quotient of the Klein bottle group is either abelian or it is isomorphic to the Klein bottle group itself. 
\end{rem} 
 \begin{longtable}{|l|l|l|l|l|l|}
\caption{Non-equivalent relations of a cycle of length $4$  in $ P(B,C) $.}\label{C4}\\
\hline
\endfirsthead
\multicolumn{4}{c}%
{{\bfseries \tablename\ \thetable{} -- continued from previous page}} \\\hline
\endhead
\hline \multicolumn{4}{l}{{Continued on next page}} \\
\endfoot

\endlastfoot
n&R&E&n&R&E\\
\hline 
$1$&$x^{-4}=1$& T&$19$&$x^{-1}y(xy^{-1})^2=1$&$ * $\\
$2$&$x^{-3}y^{-1}=1$&A&$20$&$x^{-1}y^2xy^{-1}=1$&$ * $\\
$3$&$x^{-3}yx^{-1}=1$&A&$21$&$x^{-1}y^3x^{-1}=1$&$ * $\\
$4$&$x^{-2}y^{-2}=1$&$ * $&$22$&$x^{-1}y^2x^{-1}y^{-1}=1$&$ * $\\
$5$&$x^{-2}y^{-1}xy^{-1}=1$&$ * $&$23$&$x^{-1}y(yx^{-1})^2=1$&$ * $\\
$6$&$x^{-2}yxy^{-1}=1$&$ * $&$24$&$(x^{-1}yx^{-1})^2=1$&A\\
$7$&$x^{-2}y^2x^{-1}=1$&$ * $&$25$&$x^{-1}yx^{-1}y^{-2}=1$&$ * $\\
$8$&$x^{-2}yx^{-1}y^{-1}=1$&$ * $&$26$&$x^{-1}yx^{-1}y^{-1}xy^{-1}=1$&$ * $\\
$9$&$x^{-1}(x^{-1}y)^2x^{-1}=1$&$ * $&$27$&$(x^{-1}y)^2xy^{-1}=1$&$ * $\\
$10$&$(x^{-1}y^{-1})^2=1$&A&$28$&$(x^{-1}y)^2yx^{-1}=1$&$ * $\\
$11$&$x^{-1}y^{-1}x^{-1}yx^{-1}=1$&$ * $&$29$&$(x^{-1}y)^2x^{-1}y^{-1}=1$&$ * $\\
$12$&$x^{-1}y^{-3}=1$&A&$30$&$(x^{-1}y)^3x^{-1}=1$&A\\
$13$&$x^{-1}y^{-2}xy^{-1}=1$&$ * $&$31$&$y^{-4}=1$&T\\
$14$&$x^{-1}y^{-1}x^2y^{-1}=1$&$ * $&$32$&$y^{-3}xy^{-1}=1$&A\\
$15$&$x^{-1}y^{-1}xyx^{-1}=1$&$ * $&$33$&$y^{-1}(y^{-1}x)^2y^{-1}=1$&$  *$\\
$16$&$x^{-1}y^{-1}xy^{-2}=1$&$ * $&$34$&$(y^{-1}xy^{-1})^2=1$&A\\
$17$&$x^{-1}(y^{-1}x)^2y^{-1}=1$&$ * $&$35$&$(y^{-1}x)^3y^{-1}=1$&A\\
$18$&$x^{-1}yxy^{-2}=1$&$ * $&$36$&$(xy^{-1})^4=1$&A\\
\hline
\end{longtable}
  \begin{lem}\label{equ}
 Let $G$ be a torsion-free group and $B$,  $C$ be two non-empty finite  subsets of $G$ such that $1\in C$. If $\left\langle C\right\rangle $ is neither abelian nor isomorphic to the Klein bottle group, then every two squares of $ P(B,C) $ are equivalent.
  \end{lem}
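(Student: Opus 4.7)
The plan is to show that under the hypotheses, every square in $P(B,C)$ must be of type $(ii)$, and that all type $(ii)$ squares belong to a single equivalence class from Table \ref{C4}. Both halves of the argument rest on Remark \ref{klein}: any relation in $\langle C\rangle = \langle x,y\rangle$ that turns this group into a quotient of the Klein bottle group immediately contradicts the combined hypothesis ``non-abelian and non-Klein-bottle''.

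First I would rule out squares of type $(i)$. By Remark \ref{squ1}, every such square is a bowtie, i.e., two triangles sharing an edge as in Figure \ref{two cycle with one edge in common}. The argument already used in Lemma \ref{complete} (which itself rests on the proof of Lemma \ref{two cycle with one edge}) shows that in any bowtie exactly one triangle is of type $(i)$ and the other of type $(ii)$. But a type $(ii)$ triangle, by Lemma \ref{triii}, forces one of $yxy=x$, $xyx=y$, $x^2=y^2$ to hold in $\langle C\rangle$. Each of these is a presentation of the Klein bottle group by Remark \ref{klein}, so $\langle C\rangle$ would have to be abelian or Klein bottle, contrary to hypothesis. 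Hence $P(B,C)$ contains no type $(ii)$ triangle, no bowtie, and consequently no type $(i)$ square. Alternatively, one may apply Lemma \ref{two cycle with one edge} directly: a bowtie produces elements $a,b\in \langle C\rangle$ with $a^2=b^2$ and $\langle a,b\rangle=\langle C\rangle$, again making $\langle C\rangle$ a torsion-free quotient of the Klein bottle group.

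It remains to treat type $(ii)$ squares. By Remark \ref{squ} and Table \ref{C4}, only the 20 starred rows can be realized in a torsion-free non-abelian $\langle C\rangle$. Suppose, for contradiction, that $\mathcal{C}$ and $\mathcal{C'}$ are non-equivalent type $(ii)$ squares in $P(B,C)$; then two distinct starred relations $r_1=1$ and $r_2=1$ hold simultaneously in $\langle x,y\rangle$. The key claim to verify is: \emph{for every unordered pair of distinct starred rows $(r_1,r_2)$, every torsion-free quotient of the two-relator group $\langle x,y\mid r_1,r_2\rangle$ is either abelian or isomorphic to the Klein bottle group}. Once this is established, Remark \ref{klein} applied to $\langle C\rangle$ forces it to be abelian or Klein bottle, the desired contradiction, so $\mathcal{C}$ and $\mathcal{C'}$ must be equivalent.

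The main obstacle is precisely this pairwise verification: a priori $\binom{20}{2}=190$ pairs, cut down considerably by the evident $x\leftrightarrow y$ swap and by the inversion symmetry that already underlies Table \ref{C4}. Most pairs collapse on passage to the abelianization, which produces an integer matrix of exponents whose Smith normal form forces some $x^n=1$ or $y^n=1$; in a torsion-free group this makes the corresponding generator trivial and the group cyclic (hence abelian). The remaining stubborn pairs should admit Tietze reductions to the presentation $\langle x,y\mid x^2=y^2\rangle$, placing them under Remark \ref{klein}. I would perform this finite check with GAP, in the same spirit as the classification of the 36 cases already recorded in Remark \ref{squ}, so that the verification is as reproducible as Table \ref{C4} itself.
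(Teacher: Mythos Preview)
Your approach is essentially the paper's: rule out type $(i)$ squares via Lemma \ref{two cycle with one edge} and Remark \ref{klein}, then handle type $(ii)$ squares by a computer-assisted pairwise check of the starred relations in Table \ref{C4}. Two small corrections are worth making. First, the count is off: Table \ref{C4} has 24 starred rows, not 20; the paper further discards rows 4, 9, and 33 (each already a Klein bottle presentation by Remark \ref{klein}) to obtain 21 admissible relations and $\binom{21}{2}=210$ pairs. Second, your expectation about the stubborn pairs is not what actually occurs: in the paper's GAP run 201 of the 210 pairs yield finite or abelian groups, and the nine surviving pairs are disposed of by hand by exhibiting a torsion element (e.g.\ $x^3=1$ or $(xy^{-1})^3=1$), not by a Tietze reduction to $\langle x,y\mid x^2=y^2\rangle$. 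None of this affects the soundness of your plan, only the bookkeeping.
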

  \begin{proof}
Let   $C=\{1,x,y\}$. Suppose that there are two squares in $ P(B,C) $ . Since $\left\langle C\right\rangle $  is not isomorphic to the Klein bottle group, Lemma \ref{two cycle with one edge} and Remarks \ref{squ1} and \ref{klein} imply that both squares are of type $(ii)$.
Hence, it follows from  Remark \ref{squ} that the relation corresponding to each of these two squares must be one of the 24 non-equivalent cases with the relations marked by  ``*"s unless the cases 4, 9 and 33 in Table \ref{C4} (note that by Remark \ref{klein}, each of the cases 4, 9 and 33 leads to $\left\langle C\right\rangle $ is isomorphic to the Klein bottle group). When choosing two relations different  from each other, there are $ 210 $ cases. Using GAP \cite{gap}, we see in 201 cases among these 210 cases, a free group
with generators $x,\;y  $ and relations of each of such cases is finite or abelian that is a contradiction. If the ordered pair of integers $ (i,j) $ shows the numbers of two relations corresponding to each of 210 cases  in Table \ref{C4}, then for the other 9 cases \[ (i,j)\in \{(6,11),(6,15),(8,15),(13,18),(16,20),(18,20),(19,27),(19,28),(23,27)\}. \] It is easy to see that if $ (i,j) $ is 1-th or 3-th element of the set, then $ x^5=1 $;  if $ (i,j) $ is 2-th element of the set, then $ x^3=1 $; if $ (i,j) $ is 4-th or 5-th element of the set, then $ y^5=1 $;  if $ (i,j) $ is 6-th  element of the set, then $ y^3=1 $; if $ (i,j) $ is 7-th  element of the set, then $ (x^{-1}y)^3=1 $; if $ (i,j) $ is 8-th  element of the set, then $ (yx^{-1})^3=1 $; if $ (i,j) $ is 9-th  element of the set, then $ (xy^{-1})^3=1 $. Thus, each of these 9 cases  leads to $ G $ having a non-trivial torsion element, a contradiction. This completes the proof.  
  \end{proof}
 \begin{lem}\label{forbidden}
 Let $G$ be a torsion-free group and $B$,  $C$ be two non-empty finite  subsets of $G$ such that $1\in C$. If $\left\langle C\right\rangle $ is neither abelian nor isomorphic to the Klein bottle group, then $ P(B,C) $ contains no subgraph isomorphic to one of the graphs in Figure {\rm \ref{forbiddensub}}.
  \end{lem}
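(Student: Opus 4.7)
The plan is to argue by contradiction: assume $P(B,C)$ contains a copy of one of the forbidden configurations in Figure \ref{forbiddensub}, and then use the rigidity developed in Lemmas \ref{complete}, \ref{two cycle with one edge}, \ref{triii}, and \ref{equ} to contradict the hypothesis that $\langle C\rangle$ is neither abelian nor the Klein bottle group. By Lemma \ref{iso}, I may translate to place a distinguished vertex at $1$, which lets me write every other vertex of the configuration as an explicit word in $x,y$ (where $C=\{1,x,y\}$) by reading off the edge relations of Definition \ref{defgraph}: each edge $g\sim g'$ produces an identity $g^{-1}g'=h\,h'^{-1}$ with $h\neq h'\in C$. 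This reduces the structural statement to a finite collection of word-equations in $\langle x,y\rangle$.

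Next, for each forbidden subgraph I would enumerate its constituent cycles. Any triangle in $P(B,C)$ produces, via Lemma \ref{triii}, one of the three relations $yxy=x$, $xyx=y$, or $x^2=y^2$; each of these forces $\langle C\rangle$ to be the Klein bottle group by Remark \ref{klein}, contradicting the standing hypothesis. Consequently, any forbidden configuration containing a triangle (for instance $K_4$, or a ``bowtie'' of two triangles sharing an edge) is ruled out immediately, consistent with Lemma \ref{complete} and the gluing argument in Lemma \ref{two cycle with one edge}. Thus, without loss of generality, the configurations that remain to be excluded consist only of cycles of length at least $4$.

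For those, by Lemma \ref{equ} all squares of $P(B,C)$ are equivalent, so the $8$-tuples of any two squares agree up to the rotation/reflection action defining $\mathcal{T}(\cdot)$. The key step is then to show that whenever two such squares are glued along a common edge or path, aligning their tuples forces an additional word-identity in $\langle x,y\rangle$ beyond the one already recorded in Table \ref{C4} for either square. Combining this extra identity with the admissible relation of the square typically yields either an abelianizing consequence, a torsion relation, or the Klein-bottle relation $x^2=y^2$ (equivalently $xyx=y$), each contradicting the hypothesis.

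The main obstacle will be the finite case analysis: there are $24$ admissible relations left in Table \ref{C4} after excluding those marked $A$ or $T$ and the three Klein-bottle cases ($4$, $9$, $33$), so up to $\binom{24}{2}+24$ gluings must be inspected for every distinct forbidden configuration in Figure \ref{forbiddensub}. As in Remark \ref{squ} and Lemma \ref{equ}, this verification is most conveniently done mechanically in GAP \cite{gap} by forming the finitely presented quotient of the free group on $x,y$ modulo the two relations and checking that it is finite, abelian, or isomorphic to the Klein bottle group. Once this machine check returns a forbidden conclusion in every case, the contradiction closes the proof.
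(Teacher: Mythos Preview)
Your handling of triangles contains a genuine gap. Lemma \ref{triii} does \emph{not} say that every triangle in $P(B,C)$ forces one of the relations $yxy=x$, $xyx=y$, $x^2=y^2$; it says this only for triangles of type $(ii)$ in the sense of Remark \ref{type}. A triangle of type $(i)$ simply records that three pairs $(g_i,h_i)$ give the same product $g_1h_1=g_2h_2=g_3h_3$, and imposes no word relation on $\langle x,y\rangle$ whatsoever. Under the hypothesis of the lemma, type $(ii)$ triangles are excluded (via Lemma \ref{triii} and Remark \ref{klein}), but type $(i)$ triangles remain perfectly legal. The paper opens its proof by recording exactly this: all triangles in $P(B,C)$ must be of type $(i)$ and all squares of type $(ii)$.

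This matters because $\Gamma_5$ contains two \emph{disjoint} triangles (not a $K_4$, not a bowtie), so neither Lemma \ref{complete} nor Lemma \ref{two cycle with one edge} disposes of it, and your ``ruled out immediately'' step fails there. The paper instead treats $\Gamma_5$ as two type $(i)$ triangles attached to a pair of squares sharing a single edge, and excludes it by the same within-class tuple analysis used for $\Gamma_2,\Gamma_3,\Gamma_4$.

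A smaller point: your $\binom{24}{2}+24$ count is off-target. By Lemma \ref{equ} all squares in a given $P(B,C)$ lie in a \emph{single} $\mathcal{T}$-class, so one does not pair different relations from Table \ref{C4}. The actual check, as the paper does it, is: for each of the $21$ admissible relations (the ``*'' rows minus $4,9,33$), look \emph{inside} that one $\mathcal{T}$-class for two $8$-tuples whose first components match according to the prescribed gluing and whose remaining components differ appropriately. For $\Gamma_1$ (two squares sharing two consecutive edges) no such pair ever exists; for two squares sharing one edge it exists in exactly $14$ of the $21$ cases, and one then rules out the extensions $\Gamma_2$--$\Gamma_5$ case by case from those $14$.
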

\begin{proof}
Let $C=\{1,x,y\}$. We note first that since $\left\langle C\right\rangle $ is not isomorphic to the Klein bottle group, Lemmas \ref{triii} and \ref{two cycle with one edge} and Remark \ref{klein} imply that all triangles and squares in $ P(B,C) $ must be of type $ (i) $ and $ (ii) $, respectively. Suppose that $ P(B,C) $ contains the graph $ \Gamma_1 $ in Figure {\rm \ref{forbiddensub}}. Suppose that $\mathcal{C}$ and $\mathcal{C'}$ are two cycles of  length $4$  in $\Gamma_1$  which have exactly two consecutive edges in common.  Further suppose that $ T\in \mathcal{T}(\mathcal{C}) $, $ T'\in \mathcal{T}(\mathcal{C'}) $, $ T=[h_1,h'_1,h_2,h'_2,h_3,h'_3,h_4,h'_4] $ and   $T'=[h_1,h'_1,h_2,h'_2,t_3,t'_3,t_4,t'_4] $, where the first four components are related to the common edges between these cycles. Lemma \ref{equ} and Remark \ref{eqquirem} imply $ \mathcal{T}(\mathcal{C})=\mathcal{T}(\mathcal{C'}) $ and therefore $ r(\mathcal{C})=r(\mathcal{C'}) $.  Also, since $\left\langle C\right\rangle $ is not isomorphic to the Klein bottle group, Lemma \ref{two cycle with one edge} implies  $ h_3\neq t_3 $ and $ h'_4\neq t'_4 $ and hence  $ T\neq T' $. Also, by Lemma \ref{iso}, we may assume that $ h_1=1 $. Clearly,  $ r(\mathcal{C})=r(\mathcal{C'})=1 $ is one of the relations marked by ``*"s  in Table \ref{C4} unless the cases 4, 9 and 33 (21 cases).  With  every choice of $ r(\mathcal{C})=1 $ of such cases, it is easy to see  that there are no two   elements of $ \mathcal{T}(\mathcal{C}) $  that  satisfy the mentioned conditions for $ T $ and $ T' $. So, $ P(B,C) $ contains no subgraph isomorphic to the graph $ \Gamma_1 $. Now suppose that  $ P(B,C) $ contains two cycles $ \mathcal{C} ,\; \mathcal{C'} $ of  length $4$  which have exactly one  edge in common. Further suppose that $ T\in \mathcal{T}(\mathcal{C}) $, $ T'\in \mathcal{T}(\mathcal{C'}) $, $ T=[h_1,h'_1,h_2,h'_2,h_3,h'_3,h_4,h'_4] $ and   $T'=[h_1,h'_1,t_2,t'_2,t_3,t'_3,t_4,t'_4] $, where the first two components are related to the common edge between these cycles. Lemma \ref{equ} and Remark \ref{eqquirem} imply $ \mathcal{T}(\mathcal{C})=\mathcal{T}(\mathcal{C'}) $ and therefore $ r(\mathcal{C})=r(\mathcal{C'})=1 $. It is easy to see that  either $ h_2\neq t_2 $ or $ h_2= t_2 $ and  $ h'_2\neq t'_2 $ and also either $ h'_4\neq t'_4 $ or $ h'_4= t'_4 $ and  $ h_4\neq t_4 $ and so $ T\neq T' $.  On the other hand, $ r(\mathcal{C})=r(\mathcal{C'})=1 $ is one of the relations marked by ``*"s  in Table \ref{C4} unless the cases 4, 9 and 33 (21 cases). It can be seen that just in the cases 5,6,7,8,11,13,14,15,16,18,19,20,21 and 22  there are two  elements of $ \mathcal{T}(\mathcal{C}) $ that  satisfy the mentioned conditions for $ T $ and $ T' $. So, there are 14 cases for the existence of   two squares  which have exactly one  edge in common in $ P(B,C) $. The graphs $ \Gamma_2 $, $ \Gamma_3 $ and $ \Gamma_4 $ obtained from adding a square to   two squares   which have exactly one  edge in common and also the graph $ \Gamma_5 $  obtains from adding  two triangles to the  two squares  which have exactly one  edge in common. By a same argument as above it can be seen that in each of  14 cases it is impossible that adding a square with properties of graphs  $ \Gamma_2 $, $ \Gamma_3 $ and $ \Gamma_4 $ or adding two triangles of type $ (i) $ with properties of the graph  $ \Gamma_5 $ to the  two squares  which have exactly one  edge in common. So, $ P(B,C) $ contains no subgraph isomorphic to one of the graphs $ \Gamma_2 $, $ \Gamma_3 $, $ \Gamma_4 $ and $ \Gamma_5 $ in Figure {\rm \ref{forbiddensub}}.
\end{proof}  
  \begin{figure}
  \subfloat{$  \Gamma_1 $}
\begin{tikzpicture}[scale=.85]
\draw [fill] (0,0) circle
[radius=0.1] node  [left]  {};
\draw [fill] (1,0) circle
[radius=0.1] node  [left]  {};
\draw [fill] (2,0) circle
[radius=0.1] node  [left]  {};
\draw [fill] (1,1) circle
[radius=0.1] node  [left]  {};
\draw [fill] (1,-1) circle
[radius=0.1] node  [left]  {};
\draw  (1,1) -- (1,0) -- (1,-1) -- (0,0)--(1,1);
\draw  (1,1) -- (2,0) -- (1,-1) ;
\end{tikzpicture}
\subfloat{$  \Gamma_2 $}
\begin{tikzpicture}[scale=.85]
\draw [fill] (0,0) circle
[radius=0.1] node  [left]  {};
\draw [fill] (1,0) circle
[radius=0.1] node  [right]  {};
\draw [fill] (2,0) circle
[radius=0.1] node  [below]  {};
\draw [fill] (0,-1) circle
[radius=0.1] node  [below]  {};
\draw [fill] (1,-1) circle
[radius=0.1] node  [below]  {};
\draw [fill] (2,-1) circle
[radius=0.1] node  [below]  {};
\draw [fill] (1,1) circle
[radius=0.1] node  [below]  {};
\draw (0,0) -- (1,0) -- (2,0) -- (2,-1)-- (1,-1)-- (0,-1)-- (0,0);
\draw  (1,0) -- (1,-1);
\draw  (0,0) -- (1,1)-- (2,0);
\end{tikzpicture}
\subfloat{$  \Gamma_3 $}
\begin{tikzpicture}[scale=.85]
\draw [fill] (0,0) circle
[radius=0.1] node  [left]  {};
\draw [fill] (1,0) circle
[radius=0.1] node  [below]  {};
\draw [fill] (0,-1) circle
[radius=0.1] node  [below]  {};
\draw [fill] (-.5,-.5) circle
[radius=0.1] node  [below]  {};
\draw [fill] (-.5,.5) circle
[radius=0.1] node  [below]  {};
\draw [fill] (1,-1) circle
[radius=0.1] node  [below]  {};
\draw [fill] (-1,0) circle
[radius=0.1] node  [below]  {};
\draw [fill] (-1,-1) circle
[radius=0.1] node  [below]  {};
\draw (-1,0) -- (0,0) -- (1,0) ;
\draw  (1,0) -- (1,-1) -- (0,-1) -- (0,0);
\draw  (0,0) -- (-1,0)-- (-1,-1);
\draw  (-1,-1)-- (0,-1);
\draw  (0,-1) -- (-.5,-.5) -- (-.5,.5) -- (0,0);
\end{tikzpicture}
\subfloat{$  \Gamma_4 $}  
\begin{tikzpicture}[scale=.85]
\draw [fill] (0,0) circle
[radius=0.1] node  [left]  {};
\draw [fill] (1,0) circle
[radius=0.1] node  [right]  {};
\draw [fill] (2,0) circle
[radius=0.1] node  [below]  {};
\draw [fill] (0,-1) circle
[radius=0.1] node  [below]  {};
\draw [fill] (1,-1) circle
[radius=0.1] node  [below]  {};
\draw [fill] (2,-1) circle
[radius=0.1] node  [below]  {};
\draw (0,0) -- (1,0) -- (2,0) -- (2,-1)-- (1,-1)-- (0,-1)-- (0,0);
\draw  (1,0) -- (1,-1);
\draw (0,-1) .. controls (-1,.95) and (1.75,.5)  .. (2,0);
\end{tikzpicture}
\subfloat{$  \Gamma_5 $} 
\begin{tikzpicture}[scale=.85]
\draw [fill] (0,0) circle
[radius=0.1] node  [left]  {};
\draw [fill] (-.5,-1) circle
[radius=0.1] node  [right]  {};
\draw [fill] (.5,-1) circle
[radius=0.1] node  [below]  {};
\draw [fill] (2,-1) circle
[radius=0.1] node  [below]  {};
\draw [fill] (3,-1) circle
[radius=0.1] node  [below]  {};
\draw [fill] (2.5,0) circle
[radius=0.1] node  [below]  {};
\draw (0,0) -- (-.5,-1) -- (.5,-1)-- (0,0);
\draw (2.5,0) -- (3,-1) -- (2,-1)-- (2.5,0);
\draw  (0,0) -- (2.5,0);
\draw  (.5,-1) -- (2,-1);
\draw (-.5,-1) .. controls (0,-1.5) and (2.5,-1.5)  .. (3,-1);
\end{tikzpicture}
\caption{Some forbidden subgraphs of $P(B,C) $.}\label{forbiddensub}
\end{figure}
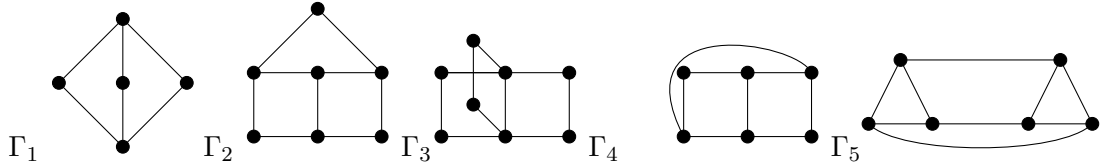
\begin{rem}\label{properties of graph}
 Let $G$ be a torsion-free group,  $C$ be a finite  subset of $G$ containing the identity element and $ B $ be  a $ k $-atom of $ C $, for some positive integer number $k$. Since $ xB $ is  a $ k $-atom of $ Cy $ for all $ x\in G $ and $y\in C^{-1}$,  we may consider the  graph $ P(xB,Cy) $  which by Lemma {\rm\ref{iso}} is isomorphic to the graph $ P(B,C) $.  
\end{rem}
\vspace{1cc}
\section {\bf Small product sets}
\begin{lem}\label{4atom}
Let $G$ be a torsion-free group and  $C$ be a  finite  subset of $G$ containing the identity element such that $ \left\langle C\right\rangle  $ is not abelian. If $ |C|=3 $ and $ \kappa_4(C)=4 $, then $ \alpha_4(C)=4 $.
\end{lem}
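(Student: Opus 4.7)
The plan is to take a 4-atom $A$ of $C$ and suppose for contradiction that $|A|>4$. Applying Lemma \ref{lem4H} immediately gives both $r_{AC}(z)\ge 2$ for every $z\in AC$ and $|A|(|C|-2)\ge 2\kappa_4(C)$; specializing to $|C|=3$ and $\kappa_4(C)=4$ forces $|A|\ge 8$. Since $r_{AC}(z)\le |C|=3$, the multiplicities lie in $\{2,3\}$, and combining $\sum_{z\in AC}r_{AC}(z)=3|A|$ with $|AC|=|A|+\kappa_4(C)=|A|+4$ determines that exactly $|A|-8$ elements of $AC$ have multiplicity $3$ and the remaining $12$ have multiplicity $2$.

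The cases $|A|=8$ and $|A|=9$ are then dispatched using Propositions \ref{conz} and \ref{conu}. When $|A|=8$, every multiplicity is $2$ and $|AC|=12=|A||C|/2$, so Proposition \ref{conz} produces $\alpha,\beta\in\mathbb{F}_2[G]$ with $|supp(\alpha)|=8$, $|supp(\beta)|=3$, and $\alpha\beta=0$, which contradicts Schweitzer's zero-divisor bound \cite[Theorem 1.3]{PS} (with the roles of $\alpha$ and $\beta$ swapped via the canonical anti-automorphism $g\mapsto g^{-1}$ of the group algebra, so that $|supp(\cdot)|=3$ on one side forces $|supp(\cdot)|\ge 9$ on the other). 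When $|A|=9$, exactly one element of $AC$ has multiplicity $3$, $|AC|=13=(|A||C|-1)/2$, and Proposition \ref{conu} produces a non-trivial unit with the analogous support sizes, contradicting \cite[Proposition 4.12]{a55}.

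For $|A|\ge 10$ the plan is to analyze $P(A,C)$. By Theorem \ref{cayley} this is an induced subgraph of the Cayley graph of $G$ on $S=CC^{-1}\setminus\{1\}$, with $|S|=6$. Each $z\in AC$ of multiplicity $3$ contributes a type-(i) triangle, and by Lemma \ref{two cycle with one edge} two type-(i) triangles cannot share an edge, so $P(A,C)$ has exactly $3(|A|-8)+12=3|A|-12$ edges. Since $2|E(P(A,C))|=\sum_{s\in S}|A\cap As|$, Lemma \ref{lem5P} with $n=4$ gives $6|A|-24\le 6\cdot(2|A|+1)/3=4|A|+2$, hence $|A|\le 13$.

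This leaves only $|A|\in\{10,11,12,13\}$. For $|A|\in\{11,12,13\}$ the above inequality is essentially tight, pinning $|A\cap As|$ to a single value for every $s\in S$; this rigidity forces a near-vertex-transitive configuration of the type-(i) triangles and the $12$ remaining edges, which is incompatible with the forbidden subgraphs collected in Lemmas \ref{complete}, \ref{equ}, and \ref{forbidden}. The case $|A|=10$ is the loosest and requires tracking the two type-(i) triangles and how the $12$ free edges interact with them. I expect this final finite case analysis to be the main obstacle: although the residue of sizes is small, each requires carefully combining the type classification of triangles and squares (Lemma \ref{triii} and Remark \ref{squ}) with the forbidden-subgraph list, and in particular a separate check when $\langle C\rangle$ is the Klein bottle group (Remark \ref{klein}), since several of the square relations in Table \ref{C4} reduce to that group rather than giving a torsion element or an abelian quotient.
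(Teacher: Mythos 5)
Your opening is fine and largely coincides with the paper: Lemma \ref{lem4H} gives $r_{AC}(z)\ge 2$ and $|A|\ge 8$, the multiplicity count ($|A|-8$ elements of multiplicity $3$, twelve of multiplicity $2$) is correct, the case $|A|=8$ is exactly the paper's argument via Proposition \ref{conz} and \cite[Theorem 1.3]{PS}, and your $|A|=9$ step via Proposition \ref{conu} and \cite[Proposition 4.12]{a55} is sound (it is what the paper does at the analogous point of Lemma \ref{5atom}). The edge count $|E(P(A,C))|=3(|A|-8)+12=3|A|-12$ and the bound $6|A|-24\le 4|A|+2$, i.e.\ $|A|\le 13$, obtained from Lemma \ref{lem5P} with $n=4$, are also correct.

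The gap is everything after that: the cases $|A|\in\{10,11,12,13\}$ are not proved, only announced. ``Rigidity forces a near-vertex-transitive configuration incompatible with the forbidden subgraphs'' is not an argument, and the tools you name cannot carry it as stated: Lemmas \ref{equ} and \ref{forbidden} are proved only under the hypothesis that $\left\langle C\right\rangle$ is neither abelian nor the Klein bottle group, while in the Klein bottle case (e.g.\ $x^2=y^2$) type-(ii) triangles and the squares $4$, $9$, $33$ of Table \ref{C4} genuinely occur, so for that case your plan has no machinery at all and you would need a separate argument you do not supply. The paper avoids all of this: it disposes of every $|A|\ge 9$ at once by a pigeonhole. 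By Lemma \ref{lem4H} each $a\in A$ starts a chain $A_a=\{a,as_a,as_at_a,as_at_ar_a\}\subseteq A$ with $s_a,t_a,r_a\in C^{-1}\setminus\{1\}$; since $\left\langle C\right\rangle$ is non-abelian and $G$ is torsion-free these four elements are distinct; there are only $2^3=8$ possible patterns $(s_a,t_a,r_a)$, so $|A|\ge 9$ yields distinct $a,b$ with the same pattern, hence $ba^{-1}A_a=A_b\subseteq ba^{-1}A\cap A$, so $|ba^{-1}A\cap A|\ge 4$, contradicting \cite[Lemma 1]{Hamidoune}. Replacing your treatment of $|A|\ge 10$ (and, if you like, of $|A|=9$) by this argument closes the proof; as it stands, your proposal leaves the decisive finite case analysis, including the Klein bottle subcase, unresolved.
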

\begin{proof}
 Let $ A $ be a $ 4 $-atom of $ C $. So, $ |AC|=|A|+4 $. Suppose, for a contradiction,  that $ |A|>4 $. It follows from Lemma \ref{lem4H} that $ |A|\geq 8 $ and $ r_{AC}(x)\geq 2 $, for all $ x\in AC $. If $ |A|=8 $, then $ |AC|=\frac{|A||C|}{2} $ that according to the proof of Proposition \ref{conz} and  \cite[Theorem 1.3]{PS}, is a contradiction. Hence, $ |A|\geq 9 $. By Lemma \ref{lem4H}, for every $ a\in A $, there are $ s_a,t_a,r_a\in C^{-1}\setminus \{1\} $ such that $ A_a=\{a,as_a,as_at_a,as_at_ar_a\}\subseteq A $. Since  $ \left\langle C\right\rangle  $  is a non-abelian group  and the group has no element of order 2 and 3 as it is torsion-free, it follows that  $ s_at_a\neq 1 $, $ t_ar_a\neq 1 $ and $ s_at_ar_a\neq 1 $. Hence, for every $ a\in A $, $ |A_a|=4 $. Since $ |C^{-1}\setminus \{1\}|=2 $, there are $ 8 $ choices for
the ordered triple $(s_a,t_a,r_a)$. So, as $ |A|\geq 9 $, there exist distinct elements $ a,b\in A $ such that $ A_a=A_b $. Therefore, $ ba^{-1}A_a=A_b\subseteq ba^{-1}A\cap A $, contradicting \cite[Lemma 1]{Hamidoune}. Hence, $ |A|=4 $. This completes the proof.
\end{proof}
\begin{lem}\label{7part}
Let $G$ be a torsion-free group and  $C$ be a  finite  subset of $G$ containing the identity element such that  $|C|=3$ and $ \left\langle C\right\rangle  $ is not abelian. If $ \kappa_4(C)=4 $ and $ B $ is  a $ 4$-atom of $ C $, then  there exists $ h\in C $ such that  $ Ch^{-1}=\{1,a,b\} $ and  exactly one of the following statements holds:
\begin{itemize}
\item[$ (1) $]$ a^3=b^2 $ and there exists $ a'\in G $ such that $ B=a'\{1,b,a,a^2\} $.
\item[$ (2) $]  $ aba^{-2}b=1 $ and there exists $ a'\in G $ such that $ B=a'\{1,a,b^{-1},b^{-1}a\} $.
\item[$ (3) $] $ ab^{-1}aba^{-1}b=1 $ and there exists $ a'\in G $ such that $ B=a'\{1,a,ba^{-1},aba^{-1}\} $.
\item[$ (4) $] $ ba^{2}b^{-1}a=1 $ and there exists $ a'\in G $ such that $ B=a'\{1,a,b^{-1},b^{-1}a^{-1}\} $.
\item[$ (5) $]$ b^{-1}a^{2}ba=1 $ and there exists $ a'\in G $ such that $ B=a'\{1,a^{-1},a^{-2},b\} $.
\item[$ (6) $] $ ba^{-2}b^{-1}a=1 $ and there exists $ a'\in G $ such that $ B=a'\{1,a,b^{-1},b^{-1}a\} $.
\item[$ (7) $] $ a^{-2}bab^{-1}=1 $ and there exists $ a'\in G $ such that $ B=a'\{1,a^{-1},a^{-2},ba^{-1}\} $.
\item[$ (8) $] $ a^{-2}b^{-2}=1 $ and there exists $ a'\in G $ such that $ B=a'\{1,a^{-1},a^{-2},b\} $.
\item[$ (9) $] $ a^{-2}b^2=1 $ and there exists $ a'\in G $ such that $ a'B$ is one of the sets $\{1,a^{-1},b^{-1},ab^{-1}\} $,\\ $\{1,a^{-1},b^{-1}a,ab^{-1}\} $, $\{1,a,b,a^{-1}\} $, $\{1,a^{-1},a,ab^{-1}\} $ and $ \{1,a,b^{-1},ba^{-1}\} $.
\end{itemize}
\end{lem}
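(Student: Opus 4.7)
The plan is to start from Lemma~\ref{4atom}, which forces $|B|=4$. Then $|BC|=|B|+\kappa_4(C)=8$ and $|B||C|=12$, so the collision count $\sum_{x\in BC}(r_{BC}(x)-1)$ equals $4$. Because each $x\in BC$ contributes $\binom{r_{BC}(x)}{2}$ edges to $P(B,C)$ and $r_{BC}(x)\le\min(|B|,|C|)=3$, this forces exactly one of three edge patterns in $P(B,C)$: four $x$'s with $r_{BC}(x)=2$ (giving 4 edges), two with $r_{BC}(x)=2$ and one with $r_{BC}(x)=3$ (5 edges), or two with $r_{BC}(x)=3$ (6 edges).

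By Theorem~\ref{cayley}, $P(B,C)$ is a simple graph on the four vertices of $B$, and Lemma~\ref{complete} forbids $K_4$, ruling out the 6-edge pattern. So $P(B,C)$ is isomorphic to one of: the diamond $K_4$ minus one edge, the 4-cycle $C_4$, or the paw graph (a triangle with a pendant edge). I would dispose of the diamond first, since it is precisely the configuration of Figure~\ref{two cycle with one edge in common}: Lemma~\ref{two cycle with one edge} directly delivers an $h\in C$ with $Ch^{-1}=\{1,a,b\}$ and $a^2=b^2$, together with an $a'\in G$ for which $a'B$ is one of two explicit sets. Accounting for the symmetries of this configuration (swap of $a,b$, replacement by inverses, and the two alternative shapes given by that lemma) and using $a^{-2}b^2=1$ produces case~(9) of the statement.

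For the remaining two graph types I would analyse cycles one at a time. For the paw I would first apply Lemma~\ref{triii} to its unique triangle to reduce $r(\mathcal{C})$ to one of $yxy=x$, $xyx=y$, or $x^2=y^2$, and then combine the conclusion with the pendant edge, which contributes an additional relation of the form $b^{-1}b'=h(h')^{-1}$ for some pair in $C$; together these pin down the coset positions of the four vertices of $B$. For the $C_4$ case I would use Remark~\ref{squ} and Table~\ref{C4}: of the $36$ non-equivalent tuples, only those marked ``$*$'' survive the torsion-free and non-abelian hypotheses, leaving at most $24$ potential relations. For each surviving tuple I would (i) choose $h\in C$ so that $Ch^{-1}$ takes the canonical form $\{1,a,b\}$, (ii) rewrite $r(\mathcal{C})=1$ as one of the relations listed in cases~(1)--(8), and (iii) apply Lemma~\ref{iso} to normalise $g_1=1$ and read off the positions of the remaining vertices.

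The main obstacle is not conceptual but the bookkeeping of this case analysis. Many of the $24$ tuples from Table~\ref{C4} and the paw configurations normalise to the same $(\text{relation},B)$ pair, which is what compresses the possibilities down to the nine lines of the conclusion. The delicate point is to show that every surviving configuration falls into one of these nine classes and that none is missed, using throughout that $G$ is torsion-free and $\langle C\rangle$ is non-abelian so that each exponent arising genuinely produces a distinct element of $G$ (with no hidden torsion collapse or abelian degeneration that would shrink $|BC|$ further).
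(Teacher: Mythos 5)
Your proposal is correct and follows essentially the same route as the paper's proof: reduce to $|B|=4$ via Lemma~\ref{4atom}, note $|BC|=8$ forces at least four edges in $P(B,C)$, and then split according to whether the graph contains two triangles sharing an edge (handled by Lemma~\ref{two cycle with one edge}, giving case~(9)), a square of type $(ii)$ (handled through Remark~\ref{squ} and Table~\ref{C4}, giving cases~(1)--(8)), or a triangle with a pendant edge (handled by Lemma~\ref{triii} plus the extra edge, again landing in case~(9)), normalising with Lemma~\ref{iso} throughout. Your organisation by graph isomorphism type (diamond, $C_4$, paw) is only a cosmetic repackaging of the paper's case analysis.
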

\begin{proof}
It follows from Lemma \ref{4atom} that $ |B|=4 $. Let $ B=\{g_1,g_2,g_3,g_4\} $. As $ \kappa_4(C)=4 $, $ |BC|=8 $. In view of Definition \ref{defgraph}, $ P(B,C)$ must have at least $ |B||C|-|BC|=4 $ edges. Therefore, it follows from Lemma \ref{cayley} that $ P(B,C)$ contains  one of the graphs in Figure \ref{gamma}.

Let  $ C=\{1,x,y\} $. Suppose   that $ P(B,C)$ contains an square $\mathcal{C}$ as Figure \ref{gamma}. Suppose first that $\mathcal{C}$ is of type $ (ii) $. Hence, according to Remark \ref{squ}  $ r(\mathcal{C})=1 $ satisfies one of the relations marked by ``*"s ($ 24 $ cases) in Table \ref{C4}.  In the following we show that the cases $ (5),(7),(17),(21),(25) $ and $ (29) $ satisfy the part $ (1) $:
\begin{figure}
\begin{tikzpicture}[scale=.9]
\draw [fill] (0,0) circle
[radius=0.1] node  [left]  {$ g_1 $};
\draw [fill] (1,0) circle
[radius=0.1] node  [right]  {$ g_2 $};
\draw [fill] (0,-1) circle
[radius=0.1] node  [left]  {$ g_4 $};
\draw [fill] (1,-1) circle
[radius=0.1] node  [right]  {$ g_3 $};
\draw   (0,0)  --  (0,-1) ;
\draw   (0,0)  --  (1,0) ;
\draw   (1,-1)  --  (1,0) ;
\draw   (1,-1)  --  (0,-1) ;
\end{tikzpicture}
\subfloat{$C_4$}
\qquad \qquad\qquad\qquad
\begin{tikzpicture}[scale=.9]
\draw [fill] (0,0) circle
[radius=0.1] node  [left]  {$ g_1 $};
\draw [fill] (1,0) circle
[radius=0.1] node  [right]  {$ g_4 $};
\draw [fill] (0,-1) circle
[radius=0.1] node  [left]  {$ g_3 $};
\draw [fill] (1,-1) circle
[radius=0.1] node  [right]  {$ g_2 $};
\draw   (0,0)  --  (0,-1) ;
\draw   (0,0)  --  (1,0) ;
\draw   (1,-1)  --  (0,0) ;
\draw   (1,-1)  --  (0,-1) ;
\end{tikzpicture}
\subfloat{$\Gamma$}
\caption{}\label{gamma}
\end{figure}
\begin{itemize}
\item[$(5)$] $ x^{-2}y^{-1}xy^{-1}=1 $: In this case $ [1,x,1,x,1,y,x,y]\in \mathcal{T}(\mathcal{C}) $ and therefore we may assume $ g_1=g_2x $, $ g_2=g_3x $, $ g_3=g_4y $ and $ g_4 x=g_1y $. In view of Remark \ref{properties of graph}, we may assume $ g_1=1 $ and we can replace $C$ with  $ Cx^{-1} $. Now, if we let $ a=x^{-1} $ and $ b=yx^{-1} $, then  $ Cx^{-1}=\{1,a,b\} $, $ a^3=b^2 $ and $ B=\{1,a,a^2,b\} $.
\item[$(7)$] $ x^{-2}y^2x^{-1}=1 $: In this case $ [1,x,1,x,y,1,y,x]\in \mathcal{T}(\mathcal{C}) $ and therefore we may assume $ g_1=g_2x $, $ g_2=g_3x $, $ g_3y=g_4 $ and $ g_4 y=g_1x $. Now, by assuming that  $ g_1=1 $, $ x^2B=\{1,x,x^2,y\} $. 
\item[$(17)$]$ x^{-1}(y^{-1}x)^2y^{-1}=1 $: In this case $ [1,x,1,y,x,y,x,y]\in \mathcal{T}(\mathcal{C}) $ and therefore we may assume $ g_1=g_2x $, $ g_2=g_3y $, $ g_3x=g_4y $ and $ g_4 x=g_1y $. According to Remark \ref{properties of graph}, we may assume $ g_1=1 $ and we can replace $C$ with  $ Cx^{-1} $. Now, if we let $ b=x^{-1} $ and $ a=yx^{-1} $, then we have $ Cx^{-1}=\{1,a,b\} $, $ a^3=b^2 $ and $ B=\{1,a,a^2,b\} $.
\item[$(21)$] $ x^{-1}y^3x^{-1}=1 $: In this case, since $ [1,x,y,1,y,1,y,x]\in \mathcal{T}(\mathcal{C}) $, by the same argument as above, it can be seen that $ xB=\{1,y,y^2,x\} $. 
 \item[$(25)$] $ x^{-1}yx^{-1}y^{-2}=1 $: By interchanging $ x $ and $ y $ in $(5)$ and with the same discussion, it can be seen that this case satisfies the part $ (1) $.
\item[$(29)$]$(x^{-1}y)^2x^{-1} y^{-1}=1 $: By interchanging $ x $ and $ y $ in $(17)$ and with the same argument, it can be seen that this case satisfies the part $ (1) $.
\end{itemize}
In the following we show that the cases $ (14) $ and $ (22) $ in Table \ref{C4} satisfy the part $ (2) $:
\begin{itemize}
\item[$(14)$] $ x^{-1}y^{-1}x^2y^{-1}=1 $: In this case $ [1,x,1,y,x,1,x,y]\in \mathcal{T}(\mathcal{C}) $ and therefore we may assume $ g_1=g_2x $, $ g_2=g_3y $, $ g_3x=g_4 $ and $ g_4 x=g_1y $. Now, by assuming that  $ g_1=1 $,  $ xB=\{1,x,y^{-1},y^{-1}x\} $.
\item[$(22)$] $ x^{-1}y^{2}x^{-1}y^{-1}=1 $: By interchanging $ x $ and $ y $ in $(14)$ and with the same argument, it can be seen that this case satisfies the part $ (2) $.
\end{itemize}
With the same argument as above, it can be seen that if $ r(\mathcal{C})=1 $ satisfies  the case $ (26) $ in Table \ref{C4}, then since $ [1,x,y,x,1,y,x,y]\in \mathcal{T}(\mathcal{C}) $, this case satisfies the part $ (3) $. In the following we show that the cases $ (11),\;(16) $ and $ (28) $ satisfy the part $ (4) $:
 \begin{itemize}
\item[$(11)$] $ x^{-1}y^{-1}x^{-1}yx^{-1}=1 $: In this case $ [1,x,1,y,1,x,y,x]\in \mathcal{T}(\mathcal{C}) $ and therefore we may assume $ g_1=g_2x $, $ g_2=g_3y $, $ g_3=g_4x $ and $ g_4y =g_1x $. Now, by assuming that  $ g_1=1 $,  $ xB=\{1,x,y^{-1}x^{-1},y^{-1}\} $.
\item[$(16)$] $ x^{-1}y^{-1}xy^{-2}=1 $: By interchanging $ x $ and $ y $ in $(11)$ and with the same discussion, it can be seen that this case satisfies the part $ (4) $.
\item[$(28)$] $ (x^{-1}y)^2yx^{-1}=1 $: In this case $ [1,x,y,x,y,1,y,x]\in \mathcal{T}(\mathcal{C}) $ and therefore we may assume $ g_1=g_2x $, $ g_2y=g_3x $, $ g_3=g_4y $ and $ g_4 y=g_1x $. Now, by assuming that   $ g_1=1 $, if we let $ b=x^{-1} $ and $ a=yx^{-1} $, then $ Cx^{-1}=\{1,a,b\} $, $ ba^2b^{-1}a=1 $ and $ b^{-1}B=\{1,b^{-1},a,b^{-1}a^{-1}\} $.
\end{itemize}
 In the following we show that the cases $ (8),\;(13) $ and $ (23) $ in Table \ref{C4} satisfy the part $ (5) $:
 \begin{itemize}
\item[$(8)$] $ x^{-2}yx^{-1}y^{-1}=1 $: In this case $ [1,x,1,x,y,x,1,y]\in \mathcal{T}(\mathcal{C}) $ and therefore we may assume $ g_1=g_2x $, $ g_2=g_3x $, $ g_3y=g_4x $ and $ g_4 =g_1y $. Now, by assuming that $ g_1=1$,  $ B=\{1,x^{-1},x^{-2},y\} $.
\item[$(13)$] $ x^{-1}y^{-2}xy^{-1}=1 $: By interchanging $ x $ and $ y $ in $(8)$ and with the same discussion, it can be seen that this case satisfies the part $ (5) $.
\item[$(23)$] $ x^{-1}y(yx^{-1})^2=1 $: In this case $ [1,x,y,1,y,x,y,x]\in \mathcal{T}(\mathcal{C}) $ and therefore we may assume $ g_1=g_2x $, $ g_2y=g_3 $, $ g_3y=g_4x $ and $ g_4 y=g_1x $.  Now, by assuming that  $ g_1=1 $, if we let $ b=x^{-1} $ and $ a=yx^{-1} $, then we have $ Cx^{-1}=\{1,a,b\} $, $ a^2bab^{-1}=1 $ and $ B=\{1,a^{-1},a^{-2},b\} $.
\end{itemize}
In the following we show that the cases $ (15),\;(18) $ and $ (27) $ in Table \ref{C4} satisfy the part $ (6) $:
 \begin{itemize}
\item[$(15)$] $ x^{-1}y^{-1}xyx^{-1}=1 $: In this case $ [1,x,1,y,x,1,y,x]\in \mathcal{T}(\mathcal{C}) $ and therefore we may assume $ g_1=g_2x $, $ g_2=g_3y $, $ g_3x=g_4 $ and $ g_4y =g_1x $.  Now, by assuming that $ g_1=1 $,   $ xB=\{1,x,y^{-1},y^{-1}x\} $.
\item[$(18)$] $ x^{-1}yxy^{-2}=1 $: By interchanging $ x $ and $ y $ in $(15)$ and with the same discussion, it can be seen that this case satisfies the part $ (6) $.
\item[$(27)$] $ (x^{-1}y)^2xy^{-1}=1 $: In this case $ [1,x,y,x,y,1,x,y]\in \mathcal{T}(\mathcal{C}) $ and therefore we may assume $ g_1=g_2x $, $ g_2y=g_3x $, $ g_3y=g_4 $ and $ g_4 x=g_1y $.  Now, by assuming that  $ g_1=1 $, if we let $ b=y^{-1} $ and $ a=xy^{-1} $, then  $ Cy^{-1}=\{1,a,b\} $, $ ba^{-2}b^{-1}a=1 $ and $ a^2b^{-1}B=\{1,a,b^{-1},b^{-1}a\} $.
\end{itemize}
In the following we show that the cases $ (6),\;(19) $ and $ (20) $ in Table \ref{C4} satisfy the part $ (7) $:
 \begin{itemize}
\item[$(6)$] $ x^{-2}yxy^{-1}=1 $: In this case $ [1,x,1,x,y,1,x,y]\in \mathcal{T}(\mathcal{C}) $ and therefore we may assume $ g_1=g_2x $, $ g_2=g_3x $, $ g_3y=g_4 $ and $ g_4x =g_1y $.  Now, by assuming that $ g_1=1 $,  $ B=\{1,x^{-1},x^{-2},yx^{-1}\} $.
\item[$(19)$] $ x^{-1}y(xy^{-1})^2=1 $: In this case $ [1,x,y,1,x,y,x,y]\in \mathcal{T}(\mathcal{C}) $ and therefore we may assume $ g_1=g_2x $, $ g_2y=g_3 $, $ g_3x=g_4y $ and $ g_4 x=g_1y $.  Now, by assuming that $ g_1=1 $, if we let $ b=y^{-1} $ and $ a=xy^{-1} $, then  $ Cy^{-1}=\{1,a,b\} $, $ a^2ba^{-1}b^{-1}=1 $ and $ B=\{1,a^{-1},a^{-2},ba^{-1}\} $.
\item[$(20)$] $ x^{-1}y^{2}xy^{-1}=1 $: By interchanging $ x $ and $ y $ in $(6)$ and with the same discussion, it can be seen that this case satisfies the part $ (7) $.
\end{itemize}
In the following we show that the cases $ (4),(9) $ and $ (33) $ in Table \ref{C4} satisfy the part $ (8) $:
\begin{itemize}
\item[$(4)$] $ x^{-2}y^{-2}=1 $: In this case $ [1,x,1,x,1,y,1,y]\in \mathcal{T}(\mathcal{C}) $ and therefore we may assume $ g_1=g_2x $, $ g_2=g_3x $, $ g_3=g_4y $ and $ g_4 =g_1y $.  Now, by assuming that $ g_1=1 $,  $ B=\{1,x^{-1},x^{-2},y\} $.
\item[$(9)$] $x^{-1}(x^{-1}y)^2x^{-1}=1$: In this case $ [1,x,1,x,y,x,y,x]\in \mathcal{T}(\mathcal{C}) $ and therefore we may assume $ g_1=g_2x $, $ g_2=g_3x $, $ g_3y=g_4x $ and $ g_4 y=g_1x $.  Now, by assuming that  $ g_1=1 $, if we let $ b=x^{-1} $ and $ a=yx^{-1} $, then  $ Cx^{-1}=\{1,a,b\} $, $ b^2a^{2}=1 $ and $ b^{-2}B=\{1,a^{-1},a^{-2},b\} $.
\item[$(33)$] $y^{-1}(y^{-1}x)^2y^{-1}=1$: By interchanging $ x $ and $ y $ in $(9)$ and with the same discussion, it can be seen that this case satisfies the part $ (8) $.
\end{itemize}
On the other hand, by  Lemma \ref{two cycle with one edge}, if the square $\mathcal{C}$ is of type $ (i) $, then the part (9) holds. Now, suppose that  $ P(B,C)$ contains the graph $ \Gamma $ as Figure \ref{gamma}.  If the triangle which is included in the graph $ \Gamma$ is of   type $ (i) $, then there exists $ x\in BC $ such that $ r_{BC}(x)=3 $ and therefore since $ |BC|=8 $, $ P(B,C)$ must have at least 5 edges and therefore  $ P(B,C)$ contains a graph isomorphic to the graph in Figure \ref{two cycle with one edge in common}. So, we may assume that the triangle which is included in the graph $ \Gamma$ is of   type $ (ii) $. Then  lemma \ref{triii} implies $ r(\mathcal{C})\in \{ x^{-1}y^{-1}xy^{-1},  x^{-1}yx^{-1}y^{-1},x^{-1}y^{2}x^{-1} \} $. Suppose that  $ T=[h_1,h'_1,h_2,h'_2,h_3,h'_3] $ is the 6-tuple to $\mathcal{C}$ corresponding to the arrangement $g_1,g_2, g_3$. Clearly, there exist $ h,h'\in C $ such that $ g_1h=g_4h' $. Note that we may assume  $ g_1=1 $ (by Remark \ref{properties of graph}) and  $ h\neq h_1 $ and $ h\neq h'_3 $ since otherwise  $ P(B,C)$ contains a graph isomorphic to the graph in Figure \ref{two cycle with one edge in common}. It is easy to see that every choice for $  r(\mathcal{C}) $ leads to holding the part $ (9) $. We will prove  the case $ r(\mathcal{C})= x^{-1}y^{-1}xy^{-1} $ and $ T=[1,x,1,y,x,y] $ only as the proof of the other cases are similar. In this case, $ g_1=g_2x $, $ g_2=g_3y $, $ g_3x=g_1y $ and $ g_1x=g_4h' $, where $ h'\in \{1,y\} $. Hence, $ B $ is one of the sets $ \{1,x,x^{-1},yx^{-1}\} $ and $ \{1,xy^{-1},x^{-1},yx^{-1}\} $. Now, if we let $ b=yx^{-1} $ and $ a= x^{-1}$, then $ Cx^{-1}=\{1,a,b\} $, $ a^2=b^2 $ and $ B  $ is one of the sets $ \{1,a,a^{-1},b\} $  and   $ \{1,b,b^{-1},a\} $. This completes the proof.
\end{proof}
\begin{lem}\label{5atom}
Let $G$ be a torsion-free group and  $C$ be a  finite  subset of $G$ containing the identity element such that  $|C|=3$ and $\left\langle C\right\rangle$ is not abelian. If  $ \kappa_5(C)=4 $, then $ \alpha_5(C)=5 $.
\end{lem}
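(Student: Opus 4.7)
Let $A$ be a $5$-atom of $C$. Since $\alpha_5(C)\geq 5$ by definition, the whole task is to rule out $|A|\geq 6$. Suppose such an $A$ exists. Lemma~\ref{lem4H} gives
\[
|A|(|C|-2)\geq 2\kappa_5(C)=8,
\]
so $|A|\geq 8$, and also $r_{AC}(x)\geq 2$ for every $x\in AC$. Since $|AC|=|A|+4$ and $r_{AC}\leq |C|=3$, writing $n_j=|\{x\in AC:r_{AC}(x)=j\}|$ one obtains $n_2=12$ and $n_3=|A|-8$; this bookkeeping is the starting point.

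The next step is to dispatch the two extremal values using the group-algebra propositions already set up. When $|A|=8$ we have $|AC|=12=|A||C|/2$, so Proposition~\ref{conz} produces $\alpha=\sum_{a\in A}a$ and $\beta=\sum_{c\in C}c$ in $\mathbb{F}_2[G]$ with $\alpha\beta=0$ and support sizes $(8,3)$, contradicting \cite[Theorem 1.3]{PS}. When $|A|=9$ we have $|AC|=13=(|A||C|-1)/2$, so Proposition~\ref{conu} provides $\alpha,\beta\in\mathbb{F}_2[G]$ with $\alpha\beta=1$ and support sizes $(9,3)$, contradicting \cite[Proposition 4.12]{a55}. These are direct translations of the proofs of the corresponding cases in Lemma~\ref{4atom} and Theorem~\ref{alpha}.

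For $|A|\geq 10$ the plan is to run the pigeonhole/iteration scheme of Lemma~\ref{4atom} one step longer. Writing $C=\{1,x,y\}$ so that $C^{-1}\setminus\{1\}=\{x^{-1},y^{-1}\}$, Lemma~\ref{lem4H} applied iteratively from any $a\in A$ supplies $s_a,t_a,r_a,q_a\in\{x^{-1},y^{-1}\}$ with
\[
A_a:=\{a,\,as_a,\,as_at_a,\,as_at_ar_a,\,as_at_ar_aq_a\}\subseteq A.
\]
A short inspection of the finitely many words of length at most four over $\{x^{-1},y^{-1}\}$ shows that every candidate coincidence $s_a=1$, $s_at_a=1$, $t_ar_a=1$, $s_at_ar_a=1$, $r_aq_a=1$, $t_ar_aq_a=1$ or $s_at_ar_aq_a=1$ forces either a non-trivial torsion element of $G$, or a relation that makes $\langle C\rangle$ abelian; either way contradicts our hypotheses, so $|A_a|=5$. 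Distinct $a,b\in A$ sharing the same quadruple $(s_a,t_a,r_a,q_a)$ then give $A_b=ba^{-1}A_a$, whence $|A\cap ba^{-1}A|\geq 5$, which contradicts the standard atom inequality $|A\cap gA|\leq k-1$ for $g\neq 1$ and $k=5$ (i.e.\ \cite[Lemma 1]{Hamidoune}).

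The main obstacle is that there are $2^4=16$ possible quadruples, so the plain pigeonhole closes the case immediately only when $|A|\geq 17$. In the intermediate range $10\leq |A|\leq 16$, the plan is to multiply the supply of quadruples attached to each $a$: by summing over $a\in A$ the Lemma~\ref{lem4H} inequality $|aC^{-1}\cap A|\geq 2$, one gets $|A\cap Ax|+|A\cap Ay|\geq |A|$, meaning that at many vertices both $x^{-1}$ and $y^{-1}$ are valid continuations, so the number of length-$4$ paths in $A$ indexed by $\{x^{-1},y^{-1}\}$ grows well beyond $16$, forcing two such paths to share both endpoints and labels and producing the required $a\neq b$ with $A_b=ba^{-1}A_a$. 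The most delicate point is the Klein bottle subcase, because there the length-$4$ words $x^{-2}y^{-2},x^{-1}y^{-2}x^{-1},y^{-1}x^{-2}y^{-1},y^{-2}x^{-2}$ collapse to $1$ and could produce degenerate $A_a$; one must show that whenever Lemma~\ref{lem4H} forces a continuation triggering one of these relations, the extra freedom from $|aC^{-1}\cap A|=3$ at some earlier step provides an alternative quadruple with $|A_a|=5$.
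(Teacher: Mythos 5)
Your treatment of $|A|=8$ and $|A|=9$ (via Propositions \ref{conz} and \ref{conu} together with \cite[Theorem 1.3]{PS} and \cite[Proposition 4.12]{a55}) coincides with the paper's, but the remainder of your plan has two genuine gaps. First, the Klein bottle case is not actually handled. Among the length-four words over $\{x^{-1},y^{-1}\}$, the coincidences $x^{-2}y^{-2}=1$ and $x^{-1}y^{-2}x^{-1}=1$ (and their symmetric variants) are exactly Klein-bottle relations, i.e.\ $x^{2}=y^{\pm 2}$ up to inversion and conjugation, and these are perfectly compatible with $\left\langle C\right\rangle$ being non-abelian and torsion-free; so your claim that every coincidence forces torsion or abelianness is false, $|A_a|=5$ can fail, and your closing sentence about ``extra freedom at an earlier step'' is a hope rather than an argument. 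The paper removes this case in one stroke at the very beginning: the Klein bottle group is a unique product group, and for unique product groups $k$-atoms have cardinality $k$, so one may assume throughout that $\left\langle C\right\rangle$ is not the Klein bottle group --- an assumption that is also indispensable for the second half of its proof.

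Second, and more seriously, your counting in the range $10\leq |A|\leq 16$ does not close. From $|AC|=|A|+4$ one gets $|A\cap Ax|\geq |A|-4$ and $|A\cap Ay|\geq |A|-4$, hence total out-degree at least $2|A|-8$ in the labelled digraph on $A$; but this does not force more than $16$ labelled length-four paths inside $A$: for $|A|=10$ a configuration with eight out-degree-one vertices (say on a long cycle) and two branching vertices admits only on the order of a dozen such paths, so no two of them need share a label sequence and the pigeonhole yields nothing. The paper proceeds differently: it first sharpens the pigeonhole by starting only from $a\in A\cap Ax$ with the known first step $x^{-1}$, so that only $2^{3}=8$ triples compete against $|A\cap Ax|\geq |A|-4\geq 9$, which settles $|A|\geq 13$; then, for $|A|\in\{10,11,12\}$, it abandons pigeonhole altogether and carries out a structural analysis of the product set graph on $A$: exactly $|A|-8$ triangles, all of type $(i)$ since type $(ii)$ is excluded by Lemma \ref{triii} once the Klein bottle group is ruled out, any two triangles sharing at most a vertex (Lemmas \ref{cayley} and \ref{two cycle with one edge}), every vertex of degree $3+s$ with $3\leq\deg\leq 6$, and no subgraph from Lemma \ref{forbidden}; it then checks that no graph on $10$, $11$ or $12$ vertices meets these constraints. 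Your proposal contains no substitute for this case analysis, which is precisely where the machinery of Section 2 (Table \ref{C4} and Lemmas \ref{two cycle with one edge}, \ref{equ}, \ref{forbidden}) is used, so the lemma is not proved for $10\leq |A|\leq 16$.
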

\begin{proof}
If $\left\langle C\right\rangle$ is isomorphic to the Klein bottle group, then  $\left\langle C\right\rangle$ is a unique product group \cite{PI} and so there is nothing to prove. So, we may assume that $\left\langle C\right\rangle$ is not isomorphic to the Klein bottle group. Let $ A $ be  a 5-atom of $ C $. Suppose, for a contradiction, that $ |A|>5 $. Thus, Lemma \ref{lem4H} and Remark \ref{set} imply  $ |AC|\geq 8 $ and $ r_{AC}(t)\in \{ 2,3\}  $, for all $ t\in AC $. If $ |A|=8 $ and $ |A|=9 $, then $|AC|=\frac{3|A|}{2} $ and $|AC|=\frac{3|A|-1}{2} $, respectively, that according to the proof of Propositions \ref{conz} and \ref{conu},  \cite[Theorem 1.3]{PS} and  \cite[Proposition 4.12]{a55} are contradictions. Thus, $ |A|\geq 10 $. Now, suppose that $ |A|\geq 13 $. Let $C=\{1,x,y\} $. Observe that, since $ r_{AC}(ay)\geq 2 $, for all $ a\in A $, $ |AC|=|A\{1,x\}| $. Hence, $ |A\cap Ax|= |A|-4\geq 9 $. Therefore,  by Lemma \ref{lem4H}, for every $ a\in A\cap Ax $, there are $ s_a,t_a,r_a\in C^{-1}\setminus \{1\} $ such that $ A_a=\{a,ax^{-1},ax^{-1}s_a,ax^{-1}s_at_a,ax^{-1}s_at_ar_a\}\subseteq A $. Since $\left\langle C\right\rangle$ is a non-abelian torsion-free group and also $\left\langle C\right\rangle$ is not isomorphic to the Klein bottle group, for every $ a\in A\cap Ax $, $ |A_a|=5 $. Since $ |C^{-1}\setminus \{1\}|=2 $, there are $ 8 $ choices for
the ordered triple $(s_a,t_a,r_a)$. So, as $ |A\cap Ax|\geq 9 $, there exist distinct elements $ a,b\in A $ such that $ A_a=A_b $. Therefore, $ ba^{-1}A_a=A_b\subseteq (ba^{-1})A\cap A $, contradicting \cite[Lemma 1]{Hamidoune}. Hence, $ 10\leq |A|\leq 12$. 
Since for each $ t\in AC $,  $ r_{AC}(t)\in \{2,3\}$, if $ T=\{t\in AC\;|\; r_{AC}(t)=3\} $, then $ |T|=|A||C|-2|AC| $ and therefore $ |T|=|A|-8 $. It is clear that, for each element $ t\in T $, there exists a triangle  of type $ (i) $ in $ S(A,C) $.  Since $\left\langle C\right\rangle$ is not isomorphic to the Klein bottle group, it follows from Lemma \ref{triii} that $ S(A,C) $ contains no triangle  of type $ (ii) $. So, if $ |A|=n $, then $ S(A,C) $ contains exactly $ n-8 $ triangles. On the other hand, by Lemma \ref{cayley} and Lemma \ref{two cycle with one edge},  each two triangles in $ S(A,C) $ have at most one vertex in common. Also, since for each $ g\in A $ and $ h\in C $,  $ r_{AC}(gh)\in \{2,3\}$, it is easy to see that the degree of each vertex $ g $ in $ S(A,C) $ is equal to $ 3+s $, where $ s=|\{h\;|\;h\in C, r_{AC}(gh)=3\}| $. So, it is clear that the degree of every vertex in  $ S(A,C) $ is equal to 3, 4, 5 or 6.   Then, $ S(A,C) $ has the following properties: (1) contains exactly $ n-8 $ triangles each two of which triangles have at most one vertex in common; (2)  every vertex of it has degree $ 3,4,5 $ or $ 6 $; (3) the degree of each its vertex $ g $ is equal to $ 3+s' $, where $ s' $ is the number of triangles which $ g $ is a common vertex between them; (4) contains no subgraph isomorphic to one of the graphs in Figure {\rm \ref{forbiddensub}}. For each $ n\in \{10,11,12\} $, we check and see that there is no graph with $ n  $ vertices and with above properties. Thus, $ |A|\notin \{10,11,12\} $ and therefore $ \alpha_5(C)\neq 5 $. This completes the proof.
\end{proof}
\begin{thm}\label{maintheorem}
Let $G$ be a torsion-free group and  $C$ be a  finite  subset of $G$ containing the identity element such that  $|C|=3$ and $\left\langle C\right\rangle$ is not abelian. If $ \kappa_5(C)\leq 4 $  and $ B $ is  a $ 5$-atom of $ C $, then  there exist $ h\in C $ and  $ a'\in G $ such that  $ Ch^{-1}=\{1,a,b\} $,  $ a^{2}=b^2 $ and  $ a'B$ is one of the sets  $\{1,a^{-1},b^{-1},b^{-1}a,ab^{-1}\} $, $\{1,a,b,a^{-1},ab^{-1}\} $ and $\{1,a^{-1},a,ab^{-1},aba^{-1}\} $.
\end{thm}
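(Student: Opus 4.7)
First I would combine the hypothesis $\kappa_5(C)\leq 4$ with Hamidoune's bound \ref{Hamidoune}: since $B$ is a $5$-atom, $|B|\geq 5$, and \ref{Hamidoune} yields $|BC|\geq |B|+|C|+1=|B|+4$, whence $\kappa_5(C)\geq 4$. Thus $\kappa_5(C)=4$, and Lemma \ref{5atom} gives $\alpha_5(C)=5$, so $|B|=5$ and $|BC|=9$.

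Next I would analyse the product set graph $P(B,C)$. Writing $n_i:=|\{x\in BC\,:\,r_{BC}(x)=i\}|$ for $i\in\{1,2,3\}$, the counting identities $\sum_x r_{BC}(x)=|B||C|=15$ and $\sum_x 1=|BC|=9$ give $n_1+n_2+n_3=9$ and $n_2+2n_3=6$. The edge count of $P(B,C)$ equals $\sum_x\binom{r_{BC}(x)}{2}=n_2+3n_3\in\{6,7,8,9\}$, and each $x$ with $r_{BC}(x)=3$ contributes a triangle of type $(i)$ in $P(B,C)$.

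The crucial step is to show that $P(B,C)$ must contain two triangles sharing an edge, i.e.\ the subgraph of Figure \ref{two cycle with one edge in common}. Assume for contradiction that it does not. If $\left\langle C\right\rangle$ is not isomorphic to the Klein bottle group, Lemma \ref{triii} together with Remark \ref{klein} forces every triangle of $P(B,C)$ to be of type $(i)$; Lemma \ref{two cycle with one edge} forces triangles to be edge-disjoint; Lemma \ref{equ} makes any two squares equivalent; and Lemma \ref{forbidden} forbids the subgraphs $\Gamma_1,\ldots,\Gamma_5$. I would then case-split on $n_3\in\{0,1,2,3\}$ to rule out every configuration: for instance, $n_3=0$ gives a triangle-free graph with $6$ edges on $5$ vertices, which must be $K_{2,3}\cong\Gamma_1$, contradicting Lemma \ref{forbidden}; the cases $n_3\geq 1$ are handled by exhausting the few ways in which $n_3$ edge-disjoint triangles together with the remaining edges can sit on $5$ vertices without producing any $\Gamma_j$. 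In the Klein-bottle case without such a sharing pair, where the forbidden-subgraph tools no longer apply directly, one has to argue ad hoc using the $*$-relations of Table \ref{C4} and Remark \ref{squ}, observing that the available squares and isolated triangles are too few to absorb the required $n_2+3n_3\geq 6$ edges. This combinatorial case check is the main technical obstacle.

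Once $P(B,C)$ contains two triangles sharing an edge, Lemma \ref{two cycle with one edge} supplies $h\in C$ and $a'\in G$ with $Ch^{-1}=\{1,a,b\}$, $a^{2}=b^{2}$, and a $4$-element subset of $a'B$ drawn from the list in that lemma (and by Remark \ref{klein}, $\left\langle C\right\rangle$ is then the Klein bottle group). To pin down the fifth element of $a'B$, I would examine the residual edges of $P(B,C)$: the two sharing triangles absorb only $5$ of the $n_2+3n_3\geq 6$ edges, so the fifth vertex must realise the remaining edges, and its position is governed by compatibility with the relation $a^{2}=b^{2}$, Remark \ref{squ}, Table \ref{C4}, and Lemma \ref{forbidden}. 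Running through the resulting attachments---starting from each of the $4$-element configurations listed in Lemma \ref{two cycle with one edge} and testing the candidate fifth vertices element by element---should leave exactly the three configurations $\{1,a^{-1},b^{-1},b^{-1}a,ab^{-1}\}$, $\{1,a,b,a^{-1},ab^{-1}\}$, and $\{1,a^{-1},a,ab^{-1},aba^{-1}\}$ claimed in the conclusion.
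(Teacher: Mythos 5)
Your reduction to $\kappa_5(C)=4$, $|B|=5$, $|BC|=9$ and the counting identities for $n_1,n_2,n_3$ are fine, and the non-Klein-bottle half of your plan is the easy half: there all triangles are type $(i)$, so the number of graph triangles equals $n_3$, and triangle counting on $5$ vertices together with the exclusion of $\Gamma_1\cong K_{2,3}$ (Lemma \ref{forbidden}) quickly kills every value of $n_3$; but that only reproves what is essentially Corollary \ref{mainresut}. The genuine gap is exactly where you place "the main technical obstacle": the Klein bottle case. There Lemma \ref{equ}, Lemma \ref{forbidden} and the exclusions in Remark \ref{squ} are all unavailable (their proofs explicitly exempt the Klein-bottle relations, e.g.\ cases $4$, $9$, $33$ of Table \ref{C4}), type $(ii)$ triangles with $x^2=y^2$ do occur (they occur in the extremal configurations themselves), and many inequivalent squares are simultaneously realizable, so your heuristic that "the available squares and isolated triangles are too few to absorb the required $n_2+3n_3\geq 6$ edges" is unsubstantiated and is not a counting fact -- it would require a relation-compatibility analysis inside the Klein bottle group that you never carry out. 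Without it you have not shown that $P(B,C)$ must contain the configuration of Figure \ref{two cycle with one edge in common}, which is the hinge of your whole argument. The same problem recurs in your last step: to pin down the fifth element you again invoke Remark \ref{squ} and Lemma \ref{forbidden}, but at that point $\left\langle C\right\rangle$ \emph{is} the Klein bottle group, so those tools do not apply; the fifth vertex has to be determined by direct computation of products in $\left\langle x,y\mid x^2=y^2\right\rangle$, which your sketch only gestures at.

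For comparison, the paper avoids the Klein-bottle-sensitive graph machinery entirely: since $|BC|=9>\tfrac{|B||C|}{2}$, there is $b\in B$, $c\in C$ with $r_{BC}(bc)=1$; deleting $b$ gives a set $B'$ with $|\partial_C(B')|\le 4$, hence (using $\kappa_4(C)=4$ from \cite[Lemma 8]{Hamidoune} and Lemma \ref{4atom}) a $4$-atom, so $B'$ is one of the nine explicitly listed sets of Lemma \ref{7part}; then the equalities $|B'C|=8$, $|BC|=9$ force two of $\{b,bx,by\}$ to lie in the explicitly computed set $B'C$, and a finite check eliminates cases $(1)$--$(8)$ and leaves exactly the three $5$-sets of the statement. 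If you want to salvage your route, you would need either to prove the "two triangles sharing an edge" claim inside the Klein bottle group by an explicit analysis of which relations of Table \ref{C4} and which type $(ii)$ triangles can coexist on $5$ vertices with $6+n_3$ edges, or to adopt the paper's deletion argument, which replaces that analysis by the already-established classification of $4$-atoms.
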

\begin{proof}
 Let $ C=\{1,x,y\} $. Since $ \kappa_5(C)\leq 4 $, it follows from \cite[Lemma 8]{Hamidoune} that  $ \kappa_4(C)=\kappa_5(C)= 4 $. Let $ B $ be a $ 5 $-atom of $ C $. By Lemma \ref{5atom}, $ |B|=5 $. Also, since $ \kappa_5(C)= 4 $, $ |BC|=9 $. So, there exist $ b\in B $ and $ c\in C $ such that $ r_{BC}(bc)=1 $ since otherwise we must have $ |BC|\leq\frac{|B||C|}{2} $, a contradiction. By the choice of $ b $, if  $ B'=B\setminus \{b\} $, then clearly $ \kappa_4(C) \leq |\partial_C(B')|\leq |\partial_C(B)|=4 $. Therefore, $ B' $ is a $ 4$-atom of $ C $ and $ |\partial_C(B')|= 4$. So, one of the 9 parts of  Lemma \ref{7part} holds. Suppose first that the part (9) holds. In other word, we may assume  that $ x,y $ satisfy  $ x^{2}=y^{2} $ and   $ B $ is one of sets $\{1,x^{-1},y^{-1},xy^{-1},b\} $, $\{1,x^{-1},y^{-1}x,xy^{-1},b\} $, $\{1,x,y,x^{-1},b\} $, $\{1,x^{-1},x,xy^{-1},b\} $ and $ \{1,x,y^{-1},yx^{-1},b\} $. Since $ |B'C|=8 $ and $ |BC|=9 $,  there exists an element $a'\in  \{b,bx,by\} $ such that  $ BC=B'C\cup \{a'\} $ and $ T=\{b,bx,by\}\setminus \{a'\}\subseteq B'C $. We checked all possible choices for $ a' $, many of them lead to either  $\left\langle C\right\rangle$ has a non-trivial torsion element or   $\left\langle C\right\rangle$ is an abelian group, that are contradictions. Just in the following cases, we have no contradiction: if $ B=\{1,x^{-1},y^{-1},xy^{-1},b\} $, then $ B'C=\{1,x^{-1},y^{-1},xy^{-1},x,y^{-1}x,xy^{-1}x,y\} $ and  there are two possible  cases: (1) $ b=x^{-2} $ which leads $ \{bx,by\}=\{x^{-1},y^{-1}\}\subseteq B'C $, $ BC=B'C\cup \{x^{-2}\}$; (2) $ b=y^{-1}x $ which leads $ \{bx,b\}=\{y,y^{-1}x\}\subseteq B'C $, $ BC=B'C\cup \{y^{-1}xy\}$;
  if $ B=\{1,x^{-1},y^{-1}x,xy^{-1},b\} $, then $ B'C=\{1,x^{-1},y^{-1}x,xy^{-1},x,y^{-1}xy,xy^{-1}x,y\} $ and  the only possibility  is $ b=y^{-1} $ which leads to $ \{bx,by\}=\{1,y^{-1}x\}\subseteq B'C $, $ BC=B'C\cup \{y^{-1}\}$;  
 if $ B=\{1,x,y,x^{-1},b\} $, then $ B'C=\{1,x,y,x^{-1},xy,x^{-1}y,x^{2},yx\} $ and  the only possibility  is  $ b=x^{-1}y $ which leads to $ \{b,by\}=\{x^{-1}y,x\}\subseteq B'C $, $ BC=B'C\cup \{x^{-1}yx\}$; 
 if $ B=\{1,x^{-1},x,xy^{-1},b\}  $, then $ B'C=\{1,x^{-1},x,xy^{-1},x^2,xy^{-1}x,xy,y\} $ and  and  there are two possible cases: (1)  $ b=y $ which leads to $ \{b,by\}=\{y,x^2\}\subseteq B'C $, $ BC=B'C\cup \{yx\}$; (2) $ b=xyx^{-1} $ which leads to $ \{b,bx\}=\{xy^{-1}x,xy\}\subseteq B'C $, $ BC=B'C\cup \{xyx^{-1}y\}$; 
  if $ B=\{1,x,y^{-1},yx^{-1},b\} $, then $ B'C=\{1,y^{-1},x,yx^{-1},x^2,yx^{-1}y,xy,y\} $ and  the only possibility  is  $ b=y $ which leads to $ \{b,by\}=\{x^{2},y\}\subseteq B'C $, $ BC=B'C\cup \{yx\}$. 
By a same argument as above we checked all other parts (1) to (8) of Lemma  \ref{7part}, in every of  these parts, every choice for $ b $ implies that either  $\left\langle C\right\rangle$ has a non-trivial torsion element or   $\left\langle C\right\rangle$ is an abelian group, that are contradictions. This completes the proof.
\end{proof}
The following Corollary follows from Theorem \ref{maintheorem} and Remark \ref{klein}.
\begin{cor}\label{mainresut}
 Let $G$ be a torsion-free group and  $C$ be a  finite  subset of $G$ containing the identity element such that  $|C|=3$ and $\left\langle C\right\rangle$  is neither abelian nor isomorphic to the Klein bottle group. Then  for all subsets $ B\subseteq G $ with $ |B|\geq 5 $,
$
|BC|\geq |B|+5.
$
\end{cor}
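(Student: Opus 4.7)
The plan is to translate the claim into an isoperimetric bound and then apply Theorem \ref{maintheorem} directly. Since $1 \in C$, we have $B \subseteq BC$ and hence $|BC| = |B| + |\partial_C(B)|$, so the desired inequality $|BC| \geq |B|+5$ is equivalent to $|\partial_C(B)| \geq 5$. Moreover, $\kappa_k(C)$ is non-decreasing in $k$ (since the feasible family of sets shrinks as $k$ grows), so it suffices to establish $\kappa_5(C) \geq 5$, and the full statement for all $|B| \geq 5$ follows at once.

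I would proceed by contradiction. Assume $\kappa_5(C) \leq 4$ and let $B$ be a $5$-atom of $C$. Theorem \ref{maintheorem} then yields $h \in C$, $a' \in G$, and elements $a, b \in G$ such that $Ch^{-1} = \{1,a,b\}$, $a^2 = b^2$, and $a'B$ has one of the three explicit forms listed there. The only point still to verify is that $\langle C\rangle = \langle a,b\rangle$: one inclusion is clear since $\{a,b\} \subseteq \langle C\rangle$, and for the reverse inclusion we observe that $1 \in C$ implies $h^{-1} \in Ch^{-1} = \{1,a,b\}$, whence $h \in \langle a,b\rangle$, and therefore $C = (Ch^{-1})h \subseteq \langle a,b\rangle$.

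Consequently $\langle C\rangle$ is a torsion-free group generated by two elements satisfying $a^2 = b^2$, i.e., a torsion-free quotient of the Klein bottle group $\langle x,y \mid x^2 = y^2\rangle$. Invoking Remark \ref{klein}, $\langle C\rangle$ must be either abelian or isomorphic to the Klein bottle group, which contradicts the standing hypothesis. There is essentially no obstacle here: all the combinatorial difficulty has been absorbed into Theorem \ref{maintheorem}, and the corollary is a short deduction whose only delicate point is the identification $\langle C\rangle = \langle Ch^{-1}\rangle$ highlighted above.
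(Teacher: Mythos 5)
Your proposal is correct and follows exactly the route the paper intends: the paper derives this corollary directly from Theorem \ref{maintheorem} together with Remark \ref{klein}, and your argument simply makes that deduction explicit (reduction to $\kappa_5(C)\geq 5$, contradiction via the structure of a $5$-atom, and the observation that $a^2=b^2$ with $\langle C\rangle=\langle a,b\rangle$ forces a torsion-free quotient of the Klein bottle group). The small point you verify, namely $\langle C\rangle=\langle Ch^{-1}\rangle$, is a worthwhile detail the paper leaves implicit, but it does not change the approach.
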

\begin{lem}\label{maintheorem6}
Let $G$ be a torsion-free group and  $C$ be a  finite  subset of $G$ containing the identity element such that  $|C|=3$ and $\left\langle C\right\rangle$ is not abelian. If $ \kappa_6(C)\leq 4 $ and $ B $ is a $6$-atom of $ C $, then  there exist $ h\in C $ and $ a'\in G $ such that  $ Ch^{-1}=\{1,a,b\} $,  $ a^{2}=b^2 $ and  $ a'B=\{1,a^{-1},b^{-1},b^{-1}a,ab^{-1},a^{-2}\}$.
\end{lem}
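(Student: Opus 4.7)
The plan is to proceed in three stages: first reduce to the case where $\langle C\rangle$ is the Klein bottle group, then show $|B|=6$, and finally classify $B$ by deleting a vertex to reach a $5$-atom and applying Theorem \ref{maintheorem}.

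Since $B$ is a $6$-atom, $|B|\geq 6\geq 5$, so I can invoke Corollary \ref{mainresut}: if $\langle C\rangle$ were non-abelian and not isomorphic to the Klein bottle group, then $|BC|\geq |B|+5$ would contradict $|\partial_C(B)|=\kappa_6(C)\leq 4$. Therefore $\langle C\rangle$ must be isomorphic to the Klein bottle group. Because the Klein bottle group is a unique product group \cite{PI}, Conjecture \ref{con1} (which is known to hold for unique product groups by the remark following its statement) gives $\alpha_6(C)=6$, hence $|B|=6$. Combined with \ref{kemperman} and Hamidoune's monotonicity \cite[Lemma 8]{Hamidoune}, this also yields $\kappa_4(C)=\kappa_5(C)=\kappa_6(C)=4$ and $|BC|=|B|+4=10$.

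Next, since $|BC|=10>9=\frac{|B||C|}{2}$, a counting argument on $\sum_{x\in BC}r_{BC}(x)=|B||C|=18$ forces the existence of $b^\star\in B$ and $c^\star\in C$ with $r_{BC}(b^\star c^\star)=1$. Setting $B'=B\setminus\{b^\star\}$, the element $b^\star c^\star$ leaves $B'C$, so $|B'C|\leq 9$ and $|\partial_C(B')|\leq 4=\kappa_5(C)$. Since $|B'|=5=\alpha_5(C)$ by Lemma \ref{5atom}, $B'$ is a $5$-atom of $C$, and Theorem \ref{maintheorem} applies: after translating and replacing $C$ by $Ch^{-1}$ for some $h\in C$, I may assume $C=\{1,a,b\}$ with $a^2=b^2$, and that $a'B'$ is one of the three explicit five-element sets listed in Theorem \ref{maintheorem}.

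For each of those three candidates for $a'B'$, I would enumerate the admissible values of $b^\star$. The constraints are that exactly one of the three products $b^\star, b^\star a, b^\star b$ is a new element of $BC$ (to maintain $|BC|=10$), and that the resulting word relations among $a,b$ are consistent with the Klein bottle presentation $\langle a,b\mid a^2=b^2\rangle$ without introducing additional torsion or collapsing to commutativity. The main obstacle is this case analysis: one must carry out the multiplicity bookkeeping for each of the three starting shapes of $B'$ against each candidate $b^\star$ drawn from $(B'C)\cdot\{1,a^{-1},b^{-1}\}$, and eliminate branches that either fail the count or produce spurious group-theoretic relations (exactly as was done for the parallel step in the proof of Theorem \ref{maintheorem}). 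A careful verification should leave a unique surviving configuration, namely $a'B=\{1,a^{-1},b^{-1},b^{-1}a,ab^{-1},a^{-2}\}$.
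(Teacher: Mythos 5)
Your outline follows the paper's route step for step: Corollary \ref{mainresut} forces $\left\langle C\right\rangle$ to be the Klein bottle group, the unique product property of that group gives $|B|=6$, monotonicity of $\kappa_k$ together with \cite[Lemma 8]{Hamidoune} gives $\kappa_4(C)=\kappa_5(C)=\kappa_6(C)=4$ and $|BC|=10$, an element $b^\star c^\star$ with $r_{BC}(b^\star c^\star)=1$ is removed so that $B'=B\setminus\{b^\star\}$ is $5$-critical of size $5$, hence a $5$-atom by Lemma \ref{5atom}, and Theorem \ref{maintheorem} pins $a'B'$ down to one of three explicit five-element sets with $C h^{-1}=\{1,a,b\}$, $a^2=b^2$. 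All of this is sound, and in two places your justification is tidier than the paper's: you obtain the unique-representation element from the count $\sum_{x\in BC}r_{BC}(x)=18<2|BC|$ rather than from the u.p.\ property, and you bound $|\partial_C(B')|\leq 4$ directly from $|B'C|\leq 9$ and $1\in C$.

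The gap is that the lemma's actual content -- that the only surviving configuration is $a'B=\{1,a^{-1},b^{-1},b^{-1}a,ab^{-1},a^{-2}\}$ -- is exactly the step you defer (``I would enumerate\dots'', ``a careful verification should leave a unique surviving configuration''). Nothing in your write-up shows, for instance, why for $B'$ of the shape $\{1,a,b,a^{-1},ab^{-1}\}$ the only admissible sixth element is $ab^{-1}a$, nor why the remaining candidates taken from $(B'C)\{1,a^{-1},b^{-1}\}\setminus B'$ force torsion or commutativity in the quotient of $\left\langle a,b \mid a^2=b^2\right\rangle$; and you assert uniqueness of the outcome without checking it. The paper's proof consists almost entirely of this bookkeeping: for each of the three shapes of $B'$ it writes out $B'C$ explicitly, runs through the admissible $b^\star$ (requiring two of $b^\star, b^\star a, b^\star b$ to fall in $B'C$ and one to be new), eliminates every branch except one per shape, and then observes that the three survivors are all left translates of the same six-element set. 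Until that finite verification is actually carried out -- by hand or using the normal form in the Klein bottle group -- the stated conclusion is a plausible claim, not a proof; the skeleton you give is correct and matches the paper, but the decisive case analysis is missing.
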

\begin{proof}
 Let $ C=\{1,x,y\} $. Without loss of generality, we may assume that $1\in B$. Since $ \kappa_6(C)\leq 4 $, $ |BC|\leq |B|+4 $ and therefore by Corollary \ref{mainresut} we may assume that $\left\langle C\right\rangle$ is isomorphic to  the Klein bottle group. Hence, $\left\langle C\right\rangle$ is a unique product group \cite{PI}. Thus, $ |B|=6 $ and there exist $ b\in B $ and $ c\in C $ such that $ r_{BC}(bc)=1 $. By the choice of $ b $, if  $ B'=B\setminus \{b\} $, then by \cite[Lemma 8]{Hamidoune},  $4\leq \kappa_4(C) \leq |\partial_C(B')|\leq |\partial_C(B)|=4 $. Therefore, $ B' $ is a $ 5$-atom of $ C $ and $ |\partial_C(B')|= 4$. So, in view of   Lemma \ref{maintheorem} we may assume that $ x,y $ satisfy  $ x^{2}=y^{2} $ and   $ B $ is one of sets $\{1,x^{-1},y^{-1},y^{-1}x,xy^{-1},b\} $, $\{1,x,y,x^{-1},xy^{-1},b\} $ and $\{1,x^{-1},x,xy^{-1},xyx^{-1},b\} $. Since $ |B'C|=9 $ and $ |BC|=10 $,  there exists an element $a'\in  \{b,bx,by\} $ such that  $ BC=B'C\cup \{a'\} $ and $ T=\{b,bx,by\}\setminus \{a'\}\subseteq B'C $. We checked all possible choices for $ a' $, many of them lead to either $ G $ has a non-trivial torsion element or being $\left\langle C\right\rangle$ is an abelian group, that are contradictions. Just in the following cases, we have no contradiction: if $ B=\{1,x^{-1},y^{-1},y^{-1}x,xy^{-1},b\} $, then $ B'C=\{1,x^{-1},y^{-1},xy^{-1},x,y^{-1}x,xy^{-1}x,y,yx^{-1}y\} $ and  the only possibility is $ b=x^{-2} $ which leads to $ \{bx,by\}=\{x^{-1},y^{-1}\}\subseteq B'C $, $ BC=B'C\cup \{x^{-2}\}$; if $ B=\{1,x,y,x^{-1},xy^{-1},b\} $, then $ B'C=\{1,x^{-1},y,xy^{-1},x,x^{2},xy,yx,xy^{-1}x\} $ and  the only possibility is $ b=xy^{-1}x $ which leads to $ \{bx,b\}=\{xy,xy^{-1}x\}\subseteq B'C $, $ BC=B'C\cup \{xy^{-1}xy\}$; if $ B=\{1,x^{-1},x,xy^{-1},xyx^{-1},b\} $, then $ B'C=\{1,x^{-1},y,xy^{-1},x,x^{2},xy,xy^{-1}xy,xy^{-1}x\} $ and  the only possibility is $ b=y $ which leads to $ \{by,b\}=\{y,y^{2}\}\subseteq B'C $, $ BC=B'C\cup \{yx\}$. Hence, in every case, there is $ a\in G $ such that $ aB $ is equal to $\{1,x^{-1},y^{-1},y^{-1}x,xy^{-1},x^{-2}\} $. This completes the proof.
\end{proof}
\begin{cor}\label{mainresut7}
Let $G$ be a torsion-free group and  $C$ be a  finite  subset of $G$ containing the identity element such that  $|C|=3$ and $\left\langle C\right\rangle$ is not abelian. Then  for all subsets $ B\subseteq G $ with $ |B|\geq 7 $, $|BC|\geq |B|+5$.
\end{cor}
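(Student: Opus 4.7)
The plan is to argue by contradiction: suppose some $B\subseteq G$ with $|B|\ge 7$ satisfies $|BC|\le |B|+4$, and I will derive an impossibility by reducing to the 6-atom classification of Lemma \ref{maintheorem6}. The first reduction comes from Corollary \ref{mainresut}: since $|BC|\ge |B|+5$ already holds for every $|B|\ge 5$ whenever $\langle C\rangle$ is not isomorphic to the Klein bottle group, I may assume $\langle C\rangle$ is the Klein bottle group. In particular $\langle C\rangle$ is a unique product group by \cite{PI}, and the validity of Conjecture \ref{con1} in the u.p.\ setting then gives $\alpha_n(C)=n$ for every $n$.

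Next I would sharpen the hypothesis to a 7-atom with tight cardinalities. From $|BC|\le |B|+4$ with $|B|\ge 7$ one has $\kappa_7(C)\le 4$; combined with inequality (\ref{Hamidoune}) (which gives $\kappa_k(C)\ge 4$ for $k\ge 4$) and monotonicity of $\kappa$ in the index, this forces $\kappa_6(C)=\kappa_7(C)=4$. Replacing $B$ by a 7-atom of $C$, I may assume $|B|=7$ and $|BC|=11$. Since $|B|\,|C|=21<22=2|BC|$, a pigeonhole count produces $b_0\in B$ and $c_0\in C$ with $r_{BC}(b_0 c_0)=1$. Setting $B'=B\setminus\{b_0\}$, one has $|B'|=6$ and $|B'C|\le 10$, so $B'$ is a 6-critical set of $C$ and, since $\alpha_6(C)=6$, a 6-atom. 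Lemma \ref{maintheorem6} then pins $B'$ down: after a translation I may assume $Ch^{-1}=\{1,a,b\}$ with $a^2=b^2$ and
\[ B'=\{1,a^{-1},b^{-1},b^{-1}a,ab^{-1},a^{-2}\}. \]

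The last step is to put $b_0$ back and derive the contradiction. Because $|BC|=|B'C|+1$, exactly one of the three products $b_0 c$ ($c\in C$) lies outside the explicit 10-element set $B'C$; writing $B'C$ out, this constrains $b_0$ to a short list of candidates. For each candidate I would read off the new relation among $a,b$ that it imposes, and check that the relation forces either the abelianity of $\langle C\rangle$ or a non-trivial torsion element in the Klein bottle group $\langle x,y\mid x^2=y^2\rangle$ — both contradictions. The main obstacle is this final case enumeration: it is mechanical but delicate, exactly parallel to the case check at the end of the proof of Lemma \ref{maintheorem6}, and the main risk is overlooking a candidate. The preceding reductions are essentially short chains of applications of results already in the paper.
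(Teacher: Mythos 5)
Your argument follows the paper's proof essentially step for step: reduce to the Klein bottle case via Corollary \ref{mainresut}, pass to a $7$-atom (of cardinality $7$ because the Klein bottle group has the unique product property), delete an element whose product is uniquely represented so that the remaining six elements form a $6$-atom classified by Lemma \ref{maintheorem6}, and finish by checking that no admissible seventh element can be reinserted without forcing torsion or abelianity. The only cosmetic differences are that you produce the uniquely represented product by the count $21<2\cdot 11$ instead of invoking the unique product property directly, and the concluding finite enumeration that you describe as ``mechanical but delicate'' is exactly the case check the paper itself performs (and likewise only sketches), so your proposal is correct and coincides with the paper's route.
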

\begin{proof}
Let $ C=\{1,x,y\} $. By Corollary \ref{mainresut}, if $\left\langle C\right\rangle$ is not isomorphic to the Klein bottle group, then there is nothing to prove. Hence, we may assume that $\left\langle C\right\rangle$ is  isomorphic to the Klein bottle group. It is sufficient to prove $ \kappa_7(C)\geq 5 $. Suppose, for a contradiction, that  $ \kappa_7(C)\leq 4 $. Let $ A $ be a $7$-atom of $ C $. Since  the Klein bottle group is a unique product group \cite{PI}, $ |A|=7 $ and also there exist $ a\in A $ and $ c\in C $ such that $ r_{AC}(ac)=1 $. By the choice of $ a $, if  $ A'=A\setminus \{a\} $, then by \cite[Lemma 8]{Hamidoune},  $4\leq \kappa_6(C) \leq |\partial_C(A')|\leq |\partial_C(A)|\leq4 $. Therefore, $ |\partial_C(A')|=4 $. So, in view of   Lemma \ref{maintheorem6} we may assume that $ x,y $ satisfy  $ x^{2}=y^{2} $ and   $ A= \{1,x^{-1},y^{-1},y^{-1}x,xy^{-1},x^{-2},a\} $. Since $ |A'C|=10 $ and $ |AC|=11 $,  there exists an element $a'\in  \{a,ax,ay\} $ such that  $ AC=A'C\cup \{a'\} $ and $ \{a,ax,ay\}\setminus \{a'\}\subseteq A'C $. It is not hard to see that every choice for $ a' $ leads to  either $\left\langle C\right\rangle$ has a non-trivial torsion element or  $\left\langle C\right\rangle$ is an abelian group, that are contradictions. Hence,  $ \kappa_7(C)\geq 5 $. This completes the proof. 
\end{proof}
\begin{lem}\label{u.p1}
Let $G$ be a  unique product group and  $C$ be a  finite  subset of $G$ containing the identity element such that  $|C|=4$ and $\left\langle C\right\rangle$ is not abelian. Then $\kappa_7(C)\geq 6$.
\end{lem}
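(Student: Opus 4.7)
The plan is to argue by contradiction. Suppose $\kappa_7(C) \leq 5$. Since Hamidoune's inequality \ref{Hamidoune} gives $\kappa_7(C) \geq |C|+1 = 5$, equality holds: $\kappa_7(C) = 5$. Because $G$ is a unique product group, Conjecture \ref{con1} is valid, so $\alpha_7(C) = 7$, and we may fix a $7$-atom $A$ with $|A|=7$ and $|AC|=12$.

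The first main step is an iterative peeling argument driven by the (two-)unique product property. At stage $i$ some $a \in A_i$ and $c \in C$ satisfy $r_{A_iC}(ac)=1$, and removing $a$ gives $A_{i+1}$ with $|A_{i+1}C| \leq |A_iC|-1$. So long as $|A_{i+1}| \geq 4$, Hamidoune's bound forces $|A_{i+1}C| \geq |A_{i+1}|+|C|+1 = |A_{i+1}|+5$, so equality must hold at each step, and every $A_{i+1}$ is an extremal atom at its level. Three iterations produce a strictly descending chain $A \supsetneq A_1 \supsetneq A_2 \supsetneq A_3$ with $|A_i|=7-i$ and $|A_iC|=12-i$; in particular $A_3$ is a $4$-atom of size $4$ with $|A_3C|=9$.

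To conclude, one must derive a contradiction from the existence of this extremal $A_3$. The multiplicity identity $\sum_{x \in A_3C} r_{A_3C}(x) = 16$ over only $|A_3C|=9$ elements, together with the two-unique-product constraint (at least two elements of $A_3C$ carry multiplicity $1$), severely restricts the admissible multiplicity profiles. The plan is to extend the product set graph $P(A_3,C)$ of Definition \ref{defgraph} to $|C|=4$: its edges are induced on $A_3$ by the Cayley generator set $S = \{hh'^{-1} \mid h \neq h',\, h,h' \in C\}$, in direct analogy with Lemma \ref{cayley}. Each short cycle in $P(A_3,C)$ encodes a word relation in $\langle C \rangle$, and following the template of Lemma \ref{triii}, Remark \ref{squ}, and Lemma \ref{forbidden}, one enumerates admissible triangles and squares and shows that every compatible configuration collapses $\langle C \rangle$ to an abelian group or introduces a non-trivial torsion element, contradicting the hypotheses.

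The hard part is this enumeration: for $|C|=4$ the set $S$ has up to $12$ elements, and the analog of Table \ref{C4} is substantially larger, so the case analysis is considerably more intricate than in the $|C|=3$ setting and is expected to require computer algebra assistance (GAP). The extra information to cut down the case list comes from the full chain $A \supsetneq A_1 \supsetneq A_2 \supsetneq A_3$: $A_3$ is not a generic $4$-atom but arises by unique-product peeling from a $7$-atom, and each $A_i$ must itself be extremal at level $|A_i|$, which strongly constrains the possible edge/cycle patterns of $P(A_3,C)$ and of the intermediate graphs $P(A_i,C)$.
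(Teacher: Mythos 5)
Your peeling chain is sound as far as it goes: if $\kappa_7(C)=5$ then a $7$-atom $A$ has $|A|=7$, $|AC|=12$, and successively deleting an element of $A$ that occurs in a unique product does give $|A_{i+1}C|\leq |A_iC|-1$, while Hamidoune's bound \ref{Hamidoune} forces equality at each stage, so you do reach a $4$-element set $A_3$ with $|A_3C|=9$. The genuine gap is in the concluding step, which is the entire content of the lemma: you do not derive a contradiction from this configuration, you only announce a program (extend $P(B,C)$ and the Cayley-graph lemma to $|C|=4$, build the analogue of Table \ref{C4}, and run a GAP enumeration) and explicitly defer its execution. Worse, it is not clear that the extremal $4\times 4$ configuration is contradictory by itself: already for $|C|=3$ the paper's Lemma \ref{7part} exhibits many sets attaining equality in \ref{Hamidoune} with $|B|=4$, so for $|C|=4$ one should expect sets $B$ with $|BC|=|B|+|C|+1$ to exist as well (the lemma only claims $\kappa_7(C)\geq 6$, not $\kappa_4(C)\geq 6$). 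Hence any contradiction must exploit the additional data of the whole chain $A\supsetneq A_1\supsetneq A_2\supsetneq A_3$, and your proposal never converts that data into concrete relations; as written the argument cannot be completed along the stated lines without a substantial, unperformed case analysis whose outcome is not guaranteed.

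The paper's proof avoids all of this by deleting an element of $C$ rather than of $A$. It first shows $\langle A\rangle=\langle C\rangle$ (otherwise Kemperman's bound \ref{kemperman} applied to coset decompositions gives $|AC|\geq 16$ or $\geq 13$, contradicting $|AC|=12$). Taking $a\in A$, $c\in C$ with $r_{AC}(ac)=1$ and $C'=C\setminus\{c\}$ (arranged so that $1\in C'$), one gets $|AC'|\leq 11$, and then Corollary \ref{mainresut7} — the already-established $|C|=3$ result — forces $\langle C'\rangle$ to be abelian, hence a proper subgroup of $\langle C\rangle=\langle A\rangle$. Decomposing $A$ over left cosets of $\langle C'\rangle$, Kemperman's bound together with $|AC'|\leq 11$ forces exactly two cosets with $|A_iC'|=|A_i|+2$, and the Brailovsky--Freiman equality case makes $A$ a union of two progressions and $C'$ a progression with the same ratio; a short explicit analysis of the splittings $(6,1)$, $(5,2)$, $(4,3)$ then produces torsion or commutativity, a contradiction. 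This reduction to the $|C|=3$ machinery is precisely what your outline lacks, and it removes any need for a new $|C|=4$ graph classification or computer search.
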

\begin{proof}
Suppose, for a contradiction,  that $\kappa_7(C)\leq 5.$ So,  \cite[Lemma 8]{Hamidoune} implies $\kappa_7(C)= 5$. Let $ A $ be a 7-atom of $ C $ containing the identity element. Since $ G $ is a unique product group, $ |A|=7 $. Suppose  that $\left\langle C\right\rangle \neq \left\langle  A\right\rangle  $. Hence, either $ C$ intersects at least two right cosets of $ \left\langle A\right\rangle $ or $A$ intersects at least two left cosets of $ \left\langle C\right\rangle $.  Suppose first that $ C$ intersects at least two right cosets of $ \left\langle A\right\rangle $. Let $C_1$ be one of these intersections. Thus, \ref{kemperman} implies
$12=|AC|=|AC_1|+|A(C\setminus C_1)|\geq 2|A|+|C|-2= 16$, a contradiction. Now suppose that $A$ intersects at least two left cosets of $ \left\langle C\right\rangle $. Let $A_1$ be one of these intersections. Thus, \ref{kemperman} implies
$12=|AC|=|A_1C|+|(A\setminus A_1)C|\geq |A|+2|C|-2= 13$, a contradiction. So, $ \left\langle A\right\rangle=\left\langle C\right\rangle $. \\
Since $ G $ is a unique product group, there exist $ a\in A $ and $ c\in C $ such that $ r_{AC}(ac)=1 $. Let $ C'=C\setminus \{c\} $. Suppose that  $c=1$. Then if we replace $C$ with $Ct^{-1}$, where $t\in C\setminus \{1\}$, then   $\left\langle Ct^{-1}\right\rangle$ is not abelian, $1\in Ct^{-1}$, $A$ is a  7-atom of $ Ct^{-1} $  and $ r_{AC}(at^{-1})=1 $. So, without loss of generality, we may assume that $ 1\in C' $. It is clear hat $ ac\notin Ac\cap AC' $ and therefore
\begin{equation}\label{2}
|AC|=|AC'|+|Ac|-|AC'\cap Ac|\geq |AC'|+1.
\end{equation}
Thus, $ |AC'|\leq 11 $ and therefore Corollary \ref{mainresut7} implies  $ \left\langle C'\right\rangle \neq \left\langle C\right\rangle$.   Since $ \left\langle A\right\rangle = \left\langle C\right\rangle $, $ A$ intersects at least two left cosets of $ \left\langle C'\right\rangle $. 
Partition $ A=A_1\cup A_2\cup\cdots \cup A_t $, where each $ A_i $ is the nonempty intersection
of $ A $ with some left coset of   $ \left\langle C'\right\rangle $. Then $ |AC'|=\sum_{i=1}^{t}|A_iC'| $.  By \ref{kemperman},   $ |A_iC'|\geq |A_i|+2 $, for all $ i\in \{1,\ldots,t\} $, and by the main result of \cite{BF}, if $ |A_iC'|= |A_i|+2 $, then $ A_i $ and $ C' $ are left and right progressions with common ratio, respectively. According to the above condition,  the only possibility is  $ t=2 $ and $ |A_iC'|= |A_i|+2 $ for each $ i\in\{1,2\} $. Therefore, without loss of generality, we may assume that $ A=\{1,x,\ldots,x^{(i-1)}\}\cup \{r,rx,\ldots,rx^{(j-1)}\} $ and $ C'=\{1,x,x^2\} $, where $ r,x\in G\setminus \{1\} $, $ i+j=7 $ and $ i>j $.  Since $ \left\langle C\right\rangle= \left\langle A\right\rangle $ is not abelian, $ rx\neq xr $ and $ xc\neq cx $. Note that we may assume that for each $ a'\in A $, $r_{AC}(a')\geq 2  $ since otherwise by the choice $ c=1 $, $ \left\langle C'\right\rangle $ is not abelian which leads to a contradiction.
It is clear that $ (i,j)\in \{(6,1),(5,2),(4,3)\} $. Suppose first that $ (i,j)=(6,1) $. In this case  $ AC'=\{1,x,\ldots,x^7,r,rx,rx^2\}  $. Since $ |AC|=12 $, there exists $ h\in Ac $ such that $ AC=AC'\cup \{h\} $ and $ Ac\setminus \{h\}\subseteq AC' $. So, since $ \left\langle C\right\rangle $ is not abelian, there exists $ B\subseteq \{c,xc,x^2c,x^3c,x^4c,x^5c\} $ such that $ |B|\geq 4 $ and $ B\subseteq \{r,rx,rx^2\} $. Thus, there are distinct elements  $ i',j'\in \{0,1,2,3,4,5\} $ such that $ x^{i'}c=x^{j'}c $ which leads to $ G $ has a non-trivial torsion-element, a contradiction. Now, suppose that  $ (i,j)=(5,2)$. In this case, $ AC'=\{1,x,\ldots,x^6,r,rx,rx^2,rx^3\}  $ and there exists $ h\in Ac $ such that $ AC=AC'\cup \{h\} $ and so $ Ac\setminus \{h\}\subseteq AC' $. Since $\left\langle C\right\rangle$ is not abelian, there exists $ B\subseteq \{c,xc,x^2c,x^3c,x^4c\}$ such that $ |B|\geq 4 $ and  $B \subseteq \{r,rx,rx^2,rx^3\} $. Hence, we must have $ |B|=4 $ since otherwise $ G $ has a non-trivial torsion element, a contradiction. Thus, $ h\in \{c,xc,x^2c,x^3c,x^4c\} $.  Therefore, $ \{rc,rxc\}\subseteq AC' $ and therefore $ \{rc,rxc\}\subseteq \{1,x,\ldots,x^6\} $. Thus, there are distinct elements  $ i',j'\in \{0,1,\ldots,6\} $ such that $ rc=x^{i'} $ and $ rxc=x^{j'} $. On the other hand, since $ r_{AC}(r)\geq 2 $,  $ G $  is a torsion-free group and $ \left\langle A\right\rangle $ is not abelian, there exists $ s\in  \{0,1,2,3,4\} $ such that $ r=x^sc $. Then, we have $ c^2=x^{(i'-s)} $, $ cxc=x^{(j'-s)} $ and $ c^{-1}xc=x^{(j'-i')} $. It is clear that $ s\notin\{i',j'\} $ since otherwise we have contradiction with $\left\langle C\right\rangle$ is a non-abelian torsion-free group. So, $ c^{-1}x^{(i'-s)}c=x^{(j'-i')(i'-s)} $ which leads to $ x^{(j'-i'-1)(i'-s)}=1 $. Hence, we must have $ j'-i'=1 $  implies $ xc=cx $ that is a contradiction. For the last case, suppose that  $ (i,j)=(4,3)$. In this case, $ AC'=\{1,x,\ldots,x^5,r,rx,rx^2,rx^3,rx^4\}  $ and there exists $ h\in Ac $ such that $ AC=AC'\cup \{h\} $ and therefore $ Ac\setminus \{h\}\subseteq AC' $. Consider two cases: (1) $ h=c $: in this case $ \{rc,rxc,rx^2c\}\subseteq AC' $ and by the same argument as the latter case, we get into a contradiction; (2) $ h\neq c $: so, $ c\in AC' $ and since $\left\langle C\right\rangle$ is not abelian, $ c\in \{r,rx,rx^2,rx^3,rx^4\} $. Thus, there exists $ i'\in \{0,1,2,3,4\} $ such that $ c=rx^{i'} $.  On the other hand, since $ r_{AC}(1) $ and $ r_{AC}(r) $ are greater than or equal to 2,  $\left\langle C\right\rangle$  is a non-abelian torsion-free group and $ \left\langle A\right\rangle $ is not abelian, there exists $ s\in  \{0,1,2,3\} $ and $ s'\in  \{0,1,2\} $ such that $ r=x^sc $ and $ 1= rx^{s'}c$. Then we have $ c^2=x^{(i'-s')} $, $ x^scx^{s'}c=1 $ and $ cx^{i'}c^{-1}=x^{-s} $. It is clear that $ i'\neq s' $ since otherwise $ G $ has a non-trivial torsion element, a contradiction.  Hence, $ c x^{i'(i'-s')}c^{-1}=x^{-s(i'-s')} $ which leads to $ x^{(-s-i')(i'-s')}=1 $. Then, $ s=i'=0 $ and therefore $ r=c $ and $ r^2=x^{-s'} $. So, the only possibility  is $ s'=2 $ which implies that   $ \{r^2,xr\}\subseteq Ac $ and $ \{r^2,xr\}\nsubseteq AC' $ since otherwise $ G $ has a non-trivial torsion element. Hence, $ |AC|\geq 13, $ a contradiction. Thus, $\kappa_7(C)\geq 6$ and this completes the proof.
\end{proof}
\begin{thm}
Let $G$ be a  unique product group and  $C$ be a  finite  subset of $G$ containing the identity element such that  $\left\langle C\right\rangle$ is not abelian.  Then for all subsets $ B $ of $ G $ with $ |B|\geq 7 $,
$|BC|\geq |B|+|C|+2$.
\end{thm}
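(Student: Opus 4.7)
\medskip

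\noindent The plan is to induct on $|C|$. The cases $|C|=3$ and $|C|=4$ are exactly Corollary \ref{mainresut7} and Lemma \ref{u.p1} respectively, so fix $|C|\geq 5$ and assume the conclusion for all smaller subsets. Arguing by contradiction, suppose $\kappa_7(C)\leq |C|+1$; by inequality \ref{Hamidoune} this forces $\kappa_7(C)=|C|+1$. Let $A$ be a $7$-atom of $C$. Since $G$ is a unique product group, Conjecture \ref{con1} holds and gives $|A|=7$, so $|AC|=|C|+8$. Translating on the left, we may also assume $1\in A$.

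Apply the unique product property to the pair $(A,C)$ to obtain $a_0\in A$ and $c_0\in C$ with $r_{AC}(a_0c_0)=1$, and set $C':=C\setminus\{c_0\}$. Since $a_0c_0\notin AC'$, we have $|AC'|\leq |AC|-1=|C|+7$. Now partition $A=A_1\cup\cdots\cup A_t$ according to the left cosets of $H:=\langle C'\rangle$ that meet $A$. Each $A_iC'$ sits inside a single coset $g_iH$, and since the $g_iH$ are pairwise distinct, the sets $A_1C',\ldots,A_tC'$ are pairwise disjoint. Summing the bound \ref{kemperman} piece by piece gives
\[
|AC'|\;=\;\sum_{i=1}^{t}|A_iC'|\;\geq\;\sum_{i=1}^{t}(|A_i|+|C'|-1)\;=\;7+t(|C|-2).
\]

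The argument now splits into three cases, each ending in contradiction. If $t\geq 2$, then $|AC'|\geq 2|C|+3\geq |C|+8$ because $|C|\geq 5$, contradicting $|AC'|\leq|C|+7$. If $t=1$ and $c_0\notin H$, then (using $1\in A\subseteq H$) the set $Ac_0$ lies in the coset $Hc_0$, which is disjoint from $H\supseteq AC'$; hence $|AC|\geq|AC'|+|A|\geq (|C|+5)+7=|C|+12$, contradicting $|AC|=|C|+8$. Finally, if $t=1$ and $c_0\in H$, then $C\subseteq H$ forces $\langle C'\rangle=\langle C\rangle$, so $\langle C'\rangle$ is non-abelian; choosing any $s\in C'$ and setting $C'':=C's^{-1}$, one checks that $1\in C''$, $\langle C''\rangle=\langle C\rangle$, and $|AC''|=|AC'|$, so the inductive hypothesis (or Lemma \ref{u.p1} when $|C''|=4$) yields $|AC''|\geq|A|+|C''|+2=|C|+8$, again a contradiction.

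The main technical obstacle is the third case: if $c_0=1$ then $1\notin C'$, and one must pass from $C'$ to its right translate $C''=C's^{-1}$ in order to restore the identity as a member of the second argument before invoking the inductive step. Verifying that this translation leaves the generated subgroup and the product set size unchanged is the crucial bookkeeping; once that is in hand, the three cases together rule out $\kappa_7(C)\leq|C|+1$ and complete the induction.
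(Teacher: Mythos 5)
Your overall strategy is essentially the paper's: induction on $|C|$ with base cases $|C|=3,4$ (Corollary \ref{mainresut7} and Lemma \ref{u.p1}), removal of the column of a unique product element to get $|AC'|\leq|AC|-1$, and a left-coset decomposition combined with the Kemperman bound \ref{kemperman}. Your reorganization into the three cases on $t$ and on whether $c_0\in H$ is legitimate and even lets you bypass the paper's preliminary step $\langle A\rangle=\langle C\rangle$, since you use $1\in A$ instead; the cases $t\geq 2$ and ($t=1$, $c_0\notin H$) are correct, and the numerical bookkeeping checks out.

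The third case, however, contains a genuine gap, exactly at the point you call ``the crucial bookkeeping''. When $c_0=1$ you have $1\notin C'$, and for $s\in C'$ the assertion $\langle C's^{-1}\rangle=\langle C\rangle$ is not something ``one checks'': right-translating a set that does not contain the identity can change the subgroup it generates. For instance, for $C'=\{x,yx,y^2x,y^3x\}$ in a free group, $C'x^{-1}=\{1,y,y^2,y^3\}$ generates the abelian group $\langle y\rangle\subsetneq\langle C'\rangle$; so in general $\langle C''\rangle$ could even be abelian, in which case neither your inductive hypothesis nor Lemma \ref{u.p1} applies to the pair $(A,C'')$. (When $c_0\neq 1$ the translation is harmless but unnecessary, since then $1\in C'$ and you can induct on $C'$ directly; the only situation where you need the translation is precisely the one where your justification fails.) The paper avoids this by translating \emph{before} deleting: if the unique product element has $c_0=1$, replace $C$ by $Ct^{-1}$ with $t\in C\setminus\{1\}$ --- since $1,t\in C$ this preserves $\langle C\rangle$ and the atom, and the deleted element becomes $t^{-1}$, so the remaining set contains $1$; this is carried out explicitly in the proof of Lemma \ref{u.p1} and is what licenses the ``without loss of generality $1\in C'$'' in the paper's proof. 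Alternatively, your argument can be repaired by one more round of the same coset analysis with $K:=\langle C''\rangle$: if $A$ meets two left cosets of $K$, Kemperman gives $|AC'|=|AC''|\geq 7+2(|C|-1)-2\geq|C|+8$; if $A\subseteq K$ and $s\notin K$, then $A=Ac_0$ and $AC'\subseteq Ks$ are disjoint, so $|AC|\geq |A|+|AC'|\geq|C|+12$; and if $s\in K$, then indeed $\langle C''\rangle=\langle C'\rangle=\langle C\rangle$. As written, though, the key claim in your third case is unjustified.
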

\begin{proof}
It is sufficient to prove $ \kappa_7(C)\geq |C|+2 $. Suppose the contrary and choose a counter-example with minimal $|C|$. Since  $\left\langle   C\right\rangle   $  is not abelian and by Lemma \ref{u.p1} and Corollary \ref{mainresut7}, $ |C|\geq 5 $. Let $A$ be a 7-atom of $ C $ containing the identity element. Since $ G $ is a unique product group,  $ |A|=7 $.  By the same argument as the proof of Lemma \ref{u.p1}, we have $\left\langle C\right\rangle= \left\langle  A\right\rangle  $.
Since $ G $ is a unique product group, there exists an element $ x\in AC $ that can be represented in
a unique way in the form $ac$ with $ a\in A $ and $ c\in C $. Let $ C'=C\setminus \{c\} $. Then $ x\notin Ac\cap AC' $ and therefore
\begin{equation}\label{1}
|AC|=|AC'|+|Ac|-|AC'\cap Ac|\geq |AC'|+1.
\end{equation}
Without loss of generality, we may assume that $ 1\in C' $. Consider two cases: (1) $ \left\langle C\right\rangle= \left\langle  C'\right\rangle  $:  then the minimality of $ |C| $ implies that $ |AC'|\geq |A|+|C'|+2 $ and by \ref{1},  $ |AC|\geq |A|+|C|+2 $ that is a contradiction; (2) $ \left\langle C\right\rangle\neq \left\langle  C'\right\rangle  $:  then $  A$ intersects at least two left cosets of $ \left\langle C'\right\rangle $. Let $A_1$ be one of these
intersections.  Hence, \ref{kemperman} implies
$
|AC'|=|A_1C'|+|(A\setminus A_1)C'|\geq |A|+ 2|C'|-2\geq |A|+|C'|+2,
$ 
and by \ref{1},  $ |AC|\geq |A|+|C|+2 $ that is a contradiction.  This completes the proof.
\end{proof}
\begin{cor}
Let $  \alpha$ and $  \beta$ be non-zero elements of $ \mathbb{F}[G] $, the group algebra of any torsion-free group over an
arbitrary field. If $|supp(\alpha)| = 3$ and $\alpha \beta= 0$, then $|supp(\beta)| \geq 12$.
\end{cor}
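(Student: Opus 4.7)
The plan is to combine a counting upper bound on $|CB|$ forced by $\alpha\beta=0$ with the sharp lower bound on $|CB|$ coming from Theorem~\ref{main}, and then to rule out the remaining borderline values of $|\mathrm{supp}(\beta)|$ by structural analysis.

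First I carry out the standard reductions. Replacing $\alpha$ by $c_0^{-1}\alpha$ for some $c_0\in C:=\mathrm{supp}(\alpha)$ preserves the relation $\alpha\beta=0$, so I may assume $1\in C$. Writing $H:=\langle C\rangle$ and decomposing $\beta=\sum_i\beta_i$ according to the right cosets $Hb_i$ of $H$ in $G$, each $\alpha\beta_i$ is supported in the single coset $Hb_i$, so disjointness of the cosets forces $\alpha\beta_i=0$ for every $i$. Choosing a nonzero $\beta_i$ and right-multiplying by $b_i^{-1}$ gives a nonzero $\beta'\in\mathbb{F}[H]$ with $\alpha\beta'=0$ and $|\mathrm{supp}(\beta')|\le|\mathrm{supp}(\beta)|$, so it suffices to prove the corollary under the extra assumption $G=H=\langle C\rangle$. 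If $H$ were abelian, being torsion-free it would be orderable and $\mathbb{F}[H]$ would be a domain, contradicting $\alpha\beta=0$. If $H$ were a unique product group (in particular, if $H$ were the Klein bottle group $\langle x,y\mid xyx=y\rangle$, which is unique product by \cite{PI}), then some $x\in CB$ would admit a unique representation $x=cb$ and the coefficient $\alpha_c\beta_b$ of $x$ in $\alpha\beta$ would be nonzero. Hence $H$ is non-abelian and not isomorphic to the Klein bottle group.

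Set $B:=\mathrm{supp}(\beta)$. For every $x\in CB$, the coefficient $\sum_{(c,b):\,cb=x}\alpha_c\beta_b$ of $x$ in $\alpha\beta$ vanishes; if $r_{CB}(x)=1$, this sum would be the nonzero product $\alpha_c\beta_b$, so $r_{CB}(x)\ge 2$ for every $x\in CB$, whence
\[
|CB|\;\le\;\frac{|C|\cdot|B|}{2}\;=\;\frac{3|B|}{2}.
\]
Corollary~\ref{mainresut} applied to $(B^{-1},C^{-1})$ (whose generated subgroup is still $H$, non-abelian, not the Klein bottle group, and containing $1$) gives $|CB|=|B^{-1}C^{-1}|\ge|B^{-1}|+5=|B|+5$ as soon as $|B|\ge 5$. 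Combining yields $|B|+5\le 3|B|/2$, that is, $|B|\ge 10$.

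It remains to exclude $|B|\in\{10,11\}$. In both cases the bounds become essentially tight: if $|B|=10$ then $|CB|=15$ and $r_{CB}(x)=2$ for every $x\in CB$, and if $|B|=11$ then $|CB|=16$ with exactly one $x\in CB$ satisfying $r_{CB}(x)=3$ and all other values equal to $2$. In either case $B^{-1}$ is a $|B|$-atom of $C^{-1}$ realising $\kappa_{|B|}(C^{-1})=5$, so the product set graph $P(B^{-1},C^{-1})$ is a highly constrained induced subgraph of the Cayley graph of $H$ on $\{hh'^{-1}:h\ne h'\in C^{-1}\}$. Rerunning the enumerations of Lemmas~\ref{triii}, \ref{two cycle with one edge} and~\ref{equ} on this graph forces the relations carried by its $3$- and $4$-cycles to make $H$ abelian, isomorphic to the Klein bottle group, or to contain a non-trivial torsion element, each contradicting the reductions of the first step. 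This final graph-theoretic case analysis is the main technical obstacle and is where the improvement from the previously known lower bound $9$ to the new bound $12$ is harvested.
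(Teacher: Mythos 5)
Your reductions and the counting step are sound and essentially parallel the paper's own argument: normalize so that $1\in C$ and $G=\langle C\rangle$, note that $\alpha\beta=0$ forces $r_{CB}(x)\ge 2$ for every $x\in CB$, hence $|CB|\le 3|B|/2$, and play this off against the lower bound $|B|+5$ (the paper uses Corollary \ref{mainresut7}, which is valid for $|B|\ge 7$ and covers the Klein bottle case; your detour through the unique product property of the Klein bottle group so as to use Corollary \ref{mainresut} instead is legitimate). One small debt: your inequality $|B|\ge 10$ is derived only under the side hypothesis $|B|\ge 5$, so you still need a sentence disposing of $|B|\le 4$ (e.g.\ \ref{kemperman} gives $|B|\ge 4$, and $|B|=4$ is impossible since \ref{Hamidoune} would give $|CB|\ge 8>6\ge 3|B|/2$); this is easily repaired.

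The genuine gap is the final step. Excluding $|B|\in\{10,11\}$ is precisely where the corollary improves on what was already known (\cite[Theorem 1.4]{AT} already yields $|supp(\beta)|\ge 10$), and your proposal does not prove it. First, the assertion that $B^{-1}$ is a $|B|$-atom of $C^{-1}$ realising $\kappa_{|B|}(C^{-1})=5$ is unjustified: from $|B^{-1}C^{-1}|=|B|+5$ you only get $\kappa_{|B|}(C^{-1})\le 5$, and minimality of $|B^{-1}|$ among critical sets is never addressed. Second, ``rerunning the enumerations of Lemmas \ref{triii}, \ref{two cycle with one edge} and \ref{equ}'' is not an argument: those lemmas constrain only triangles and squares of $P(B,C)$ and do not reduce a $10$- or $11$-vertex configuration with the prescribed multiplicity pattern to a finite contradiction; in the case $|B|=10$ such an analysis would amount to reproving from scratch that $\mathbb{F}_2[G]$ admits no zero divisor with support sizes $3$ and $10$, which is a deep computational result, not a corollary of those lemmas. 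The paper takes a different and much shorter route here: from the exact multiplicities ($r_{BC}\equiv 2$ when $|B|=10$; exactly one value $3$ and the rest $2$ when $|B|=11$) it forms $\delta=\sum_{b\in B}b$ and $\gamma=\sum_{c\in C}c$ in $\mathbb{F}_2[G]$ and obtains $\delta\gamma=0$, contradicting \cite[Theorem 1.3]{PS}, respectively $b^{-1}\delta\gamma c^{-1}=1$, contradicting \cite[Proposition 4.12]{a55}. Without these external inputs (or a genuinely executed combinatorial substitute), your argument establishes only $|supp(\beta)|\ge 10$, not the claimed $\ge 12$.
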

\begin{proof}
 By \cite[Theorem 1.4]{AT}, it is sufficient to prove $|supp(\beta)| \notin \{10,11\}$. Since $\alpha\beta=0$, $\beta^*\alpha^*=0$, where  $supp(\alpha^*)=supp(\alpha)^{-1}$ and $supp(\beta^*)=supp(\beta)^{-1}$. Let $ B=supp(\beta^*) $ and $ C=supp(\alpha^*) $.  By \cite[Lemma 2.7]{AJ} and since $\beta^*\alpha^* h^{-1} = 0$, for all $  h\in C$, we may assume that $ 1\in C $ and  $ G=\left\langle C\right\rangle  $. Since $\beta^*\alpha^* = 0$, $ r_{BC}(x)\geq 2 $, for all $ x\in BC $. Hence, $ |BC|\leq \frac{3|B|}{2} $. On the other hand, since $ G $ is not abelian (see \cite[Theorem 26.2]{PI}, Corollary \ref{mainresut7} implies $ |BC|\geq |B|+5 $. Thus, $ |B|+5\leq |BC|\leq \frac{3|B|}{2}$. Suppose first that $ |B|=10 $. Then $ |BC|=15 $ and therefore  $ r_{BC}(x)= 2 $, for all $ x\in BC $. Hence, if we let $ \delta=\sum_{b\in B}b $ and $ \gamma=\sum_{c\in C}c $, then $ \delta,\gamma\in \mathbb{F}_2[G] $ and $ \delta\gamma=0 $ which by \cite[Theorem 1.3]{PS}  is a contradiction. Now, suppose that $ |B|=11 $. Hence, $ |BC|=16 $ and therefore there exist $ x\in BC $ such that $ r_{BC}(x)=3 $ and $ r_{BC}(x')=2 $, for all $ x'\in BC\setminus \{x\} $. Suppose that $ (b,c)\in R_{BC}(x) $. Hence, if we let $ \delta=\sum_{b\in B}b $ and $ \gamma=\sum_{c\in C}c $, then $ \delta,\gamma\in \mathbb{F}_2[G] $ and $ b^{-1}\delta\gamma c^{-1}=1 $ which by \cite[Proposition 4.12]{a55}  is a contradiction.
\end{proof}
\begin{cor}
Let $ \delta$ and $  \gamma$ be  elements of the group algebra of any torsion-free group over an
arbitrary field. If $|supp(\delta)| = 3$ and $\delta\gamma= 1$, then $|supp(\gamma)| \geq 10$.
\end{cor}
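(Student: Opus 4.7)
The plan is to mirror the argument used for Corollary \ref{11} (the zero-divisor case), with appropriate modifications for units. First I would apply the canonical antiautomorphism $\ast$ sending $\sum a_g g \mapsto \sum a_g g^{-1}$ to the relation $\delta\gamma = 1$, obtaining $\gamma^\ast\delta^\ast = 1$. Set $B := \operatorname{supp}(\gamma^\ast)$ and $C := \operatorname{supp}(\delta^\ast)$, so $|B| = |\operatorname{supp}(\gamma)|$ and $|C| = 3$. By \cite[Lemma 2.7]{AJ} applied to $\gamma^\ast\delta^\ast h^{-1} = h^{-1}$ for each $h \in C$, we may translate so that $1 \in C$ and replace $G$ by $\langle C\rangle$. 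Since $\delta^\ast$ is a non-trivial unit, $\langle C\rangle$ cannot be abelian (cf.\ \cite[Theorem 26.2]{PI}), so the hypotheses of Corollary \ref{mainresut7} are available.

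Next, I read off coefficients in $\gamma^\ast\delta^\ast = 1$. For each $x \in BC \setminus\{1\}$, the identity
\[
\sum_{(b,c) \in R_{BC}(x)} (\gamma^\ast)_b\,(\delta^\ast)_c = 0
\]
together with the fact that each $(\gamma^\ast)_b,(\delta^\ast)_c$ is nonzero forces $r_{BC}(x) \geq 2$; and $r_{BC}(1) \geq 1$. Summing these,
\[
3|B| \;=\; \sum_{x \in BC} r_{BC}(x) \;\geq\; 1 + 2\bigl(|BC|-1\bigr),
\]
which gives $|BC| \leq \tfrac{3|B|+1}{2}$. For $|B| \leq 6$ the desired bound follows from \cite[Theorem 1.7]{AT}; for $|B| \geq 7$, Corollary \ref{mainresut7} supplies $|BC| \geq |B|+5$, and combining both estimates forces $|B| \geq 9$.

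Finally, the borderline case $|B| = 9$ must be excluded. Here both inequalities collapse to equalities: $|BC| = 14$, $r_{BC}(1) = 1$, and $r_{BC}(x) = 2$ for every $x \in BC \setminus\{1\}$. Setting $\delta_0 := \sum_{b \in B} b$ and $\gamma_0 := \sum_{c \in C} c$ in $\mathbb{F}_2[G]$, the coefficient of each $x$ in $\delta_0\gamma_0$ is $r_{BC}(x) \pmod{2}$, so $\delta_0\gamma_0 = 1$ in $\mathbb{F}_2[G]$ with support sizes $9$ and $3$ -- contradicting \cite[Proposition 4.12]{a55}.

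The main obstacle is precisely this endpoint $|B| = 9$: the purely combinatorial counting is tight there and carries no slack, so the argument must exit the "pure cardinality" world and use the mod $2$ reduction together with an external result on units in $\mathbb{F}_2[G]$. Everything else (the involution trick, the application of Corollary \ref{mainresut7}, and the small-$|B|$ bookkeeping via \cite[Theorem 1.7]{AT}) is routine bookkeeping once the analogy with Corollary \ref{11} is made.
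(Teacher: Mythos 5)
Your proposal is correct and takes essentially the same route as the paper: normalize so that $1\in C$ and $G=\langle C\rangle$ using the normalization lemma from Abdollahi--Jafari, reduce to excluding $|B|=9$ (the paper gets this directly from Theorem 1.7 of Abdollahi--Taheri, you partly re-derive it from the counting bound plus Corollary \ref{mainresut7}), and then kill the endpoint by observing that $|BC|=14$ forces $r_{BC}(1)=1$ and $r_{BC}(x)=2$ otherwise, producing a unit equation in $\mathbb{F}_2[G]$ with supports $9$ and $3$ that contradicts Proposition 4.12 of Dykema--Heister--Juschenko. The only cosmetic deviations are your explicit use of the involution where the paper writes ``without loss of generality $\gamma\delta=1$'', and your citation of Lemma 2.7 of Abdollahi--Jafari where the paper's unit case invokes Lemma 2.9.
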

\begin{proof}
By \cite[Theorem 1.7]{AT}, it is sufficient to prove $|B| \neq 9$. Suppose, for a contradiction,  that $ |B|=9 $. Let $ B=supp(\gamma) $ and $ C=supp(\delta) $.   Without loss of generality, we may assume that $\gamma\delta = 1$. Since $\gamma\delta = 1$, there exist $ b\in B $ and $ c\in C $ such that $ bc=1 $ and also  $ r_{BC}(x)\geq 2 $, for all $ x\in BC\setminus \{1\} $. Therefore, $ |BC|\leq 14 $. Also,  since $b^{-1}\gamma\delta c^{-1} =1$,   by \cite[Lemma 2.9]{AJ}  we may assume that $ 1\in  C $ and  $ G=\left\langle C\right\rangle  $. Now   since $ G $ is not abelian (see \cite[Theorem 26.2]{PI},   Corollary \ref{mainresut7} implies $ |BC|\geq 14$.
 Then,  $ |BC|= 14 $. Thus, the only possibility is  $ r_{BC}(1)=1 $ and  $ r_{BC}(x)= 2 $, for all $ x\in  BC\setminus \{1\} $. So, if we let $ \alpha=\sum_{b\in B}b $ and $ \beta=\sum_{c\in C}c $, then $ \alpha,\beta\in \mathbb{F}_2[G] $ and $ \alpha\beta=1 $ which by \cite[Proposition 4.12]{a55}   is a contradiction. This completes the proof.
\end{proof}

\small\noindent{\bf Alireza Abdollahi} \\
Department of Mathematics, University of Isfahan, Isfahan 81746-73441,
 Iran,\\ and \\ School of Mathematics, Institute for Research in Fundamental Sciences (IPM), P.O.Box 19395-5746, Tehran, Iran\\
E-mail address: {\tt a.abdollahi@math.ui.ac.ir}\\

\noindent {\bf Fatemeh Jafari} \\
 Department of Mathematics, University of Isfahan, Isfahan 81746-73441, Iran.\\
E-mail address: {\tt f$\_ $jafari@sci.ui.ac.ir}
\end{document}